\renewcommand{\geq}{\geqslant}
\renewcommand{\leq}{\leqslant}
\renewcommand{\epsilon}{\varepsilon}
\newtheorem{thm}{Theorem}[section]
\newtheorem{lma}[thm]{Lemma}
\newtheorem{cor}[thm]{Corollary}
\newtheorem{prop}[thm]{Proposition}
\newtheorem{rem}[thm]{Remark}
\newtheorem{ques}[thm]{Question}
\newtheorem{example}[thm]{Example}
\numberwithin{equation}{section}
\begin{document}

\title[New dimension spectra]{New dimension spectra:\\ finer information on scaling and homogeneity}


\author[J. M. Fraser]{Jonathan M. Fraser}
\address{Mathematical Institute\\ The University of St Andrews\\ St Andrews\\ KY16 9SS\\ Scotland }
\urladdr{http://www.mcs.st-and.ac.uk/~jmf32/}
\curraddr{}
\email{jmf32@st-andrews.ac.uk}
\thanks{The work of JMF was partially supported by the \emph{Leverhulme Trust Research Fellowship} (RF-2016-500).  This work began  while both authors were at the University of Manchester and they are grateful for the inspiring atmosphere they enjoyed during their time there.  They are also grateful to Chris Miller for posing interesting questions.}

\author[H. Yu]{Han Yu}
\address{Mathematical Institute\\ The University of St Andrews\\ St Andrews\\ KY16 9SS\\ Scotland }
\curraddr{}
\email{hy25@st-andrews.ac.uk}
\thanks{}

\date{}

\subjclass[2010]{Primary: 28A80.  Secondary: 30L05, 26A21.}

\keywords{Assouad dimension, lower dimension,  box-counting dimension, continuity, measureability, unwinding spirals.}

\begin{abstract}We introduce a new dimension spectrum motivated by the Assouad dimension; a familiar notion of dimension which, for a given metric space, returns the minimal exponent $\alpha\geq 0$ such that for any pair of scales $0<r<R$, any ball of radius $R$ may be covered by a constant times $(R/r)^\alpha$ balls of radius $r$.  To each $\theta \in (0,1)$, we associate the appropriate analogue of the Assouad dimension with the restriction that the two scales $r$ and $R$ used in the definition satisfy $\log R/\log r = \theta$.  The resulting `dimension spectrum' (as a function of $\theta$) thus gives finer geometric information regarding the scaling structure of the space and, in some precise sense, interpolates between the upper box dimension and the Assouad dimension.  This latter point is particularly useful because the spectrum is  generally better behaved than the Assouad dimension.  We also consider the corresponding `lower spectrum', motivated by the lower dimension, which acts as a dual to the Assouad spectrum. 

We conduct a detailed study of these dimension spectra; including analytic, geometric, and measureability properties.  We also compute the spectra explicitly for some common examples of fractals including decreasing sequences  with decreasing gaps and  spirals with sub-exponential and monotonic winding.  We also give several  applications of our results, including: dimension distortion estimates under bi-H\"older maps for Assouad dimension and the provision of  new bi-Lipschitz invariants. 
\end{abstract}

\maketitle

\tableofcontents

\section{New dimension spectra, summary of results, and organisation of paper}

The Assouad dimension is a fundamental notion of dimension used to study fractal objects in a wide variety of contexts.  It was popularised by Assouad in the 1970s \cite{assouadphd, assouad} and subsequently took on significant importance in embedding theory.  Recall the famous \emph{Assouad Embedding Theorem} which states that if $(X,d)$ is a metric space with the doubling property (equivalently, with finite Assouad dimension), then $(X, d^\varepsilon)$ admits a bi-Lipschitz embedding into some finite dimensional Euclidean space for any $\varepsilon \in (0,1)$.  The notion we now call Assouad dimension does go back further, however, to Larman's work in the 1960s \cite{Larman1, Larman2} and even to Bouligand's 1928 paper \cite{Bouligand}.  It is also worth noting that, due to its deep connections with tangents (see \cite{mackaytyson}), it is intimately related to pioneering work of Furstenberg on micro-sets which goes back to the 1960s, see \cite{Furstenberg08}. Roughly speaking, the Assouad dimension assigns a number to a given metric space which quantifies the most difficult location and scale at which to cover the space.  More precisely, it considers two scales $0<r<R$ and finds the maximal exponential growth rate of $N(B(x,R), r)$ as $R$ and $r$ decrease, where $N(E,r)$ is the minimal number of $r$-balls required to cover a set $E$. 

The Assouad dimension has found important applications in a wide variety of contexts, including a sustained importance in embedding theory, see \cite{Robinson, Olson2002, Olson2010}.  It is also central to quasi-conformal geometry, see \cite{Heinonen01, Tyson, mackaytyson}, and has recently been gaining significant attention in the literature on fractal geometry and geometric measure theory, see for example  \cite{Mackay, Luukkainen, Fraser, FraserOrponen, kaenmakiassouad, kaenmakiassouad2, rajala}.  However, its application and interest does not end there. For example, in the study of fractional Hardy inequalities,  if the boundary of a domain in $\mathbb{R}^d$ has Assouad dimension less than or equal to $d-p$, then the domain admits the fractional $p$-Hardy inequality, see \cite{Aikawa91, Koskela03, Lehrback13}.  Also, Hieronymi and  Miller have recently used the Assouad dimension to study nondefinability problems relating to expansions in the real number field, see \cite{miller}.  There are also connections between the Assouad dimension and problems in arithmetic combinatorics, for example the existence of arithmetic progressions or asymptotic arithmetic progressions, see \cite{patches, kota}.

Since it is an \emph{extremal} quantity, the Assouad dimension gives rather coarse information about the space and is often very large; larger than the other familiar notions of dimension such as Hausdorff and box-counting dimension.  Also, despite the fact that two scales are used ($r$ and $R$), the Assouad dimension returns no information about \emph{which} scales `see' the maximal exponential growth rate described above.  In this paper we propose a programme to tackle these  problems: we fix the relationship between the scales $R$ and $r$ and then compute the corresponding \emph{restricted} Assouad dimension by  only considering pairs of scales with this fixed relationship.  More precisely, for a fixed $\theta \in (0,1)$, we look for the maximal exponential growth rate of $N(B(x,R), r)$ as $R$ decreases and $r$ is defined by $\log R/\log r = \theta$.  One can then vary $\theta$ and obtain a spectrum of dimensions for the given metric space which can be viewed as providing finer geometric information about the (lack of) homogeneity present and a more complete picture of how the space scales.  One may  also be able to pick out $\theta$s which `see' the Assouad dimension, i.e., values where the spectrum reaches the true Assouad dimension.  If the Assouad dimension is `seen' by the spectrum, then we are able to glean more information about the Assouad dimension because the spectrum is generally better behaved than the Assouad dimension, see for example Theorem \ref{Holderassouadcor}.

Another key motivation for  this work is that the finer the information we are able glean concerning the scaling structure of the space, the better the applications should be.  In particular, we believe that the notions we introduce and study here should bear fruit in other areas where the Assouad dimension already plays a role; such as embedding theory, quasi-conformal geometry, and geometric measure theory.

We begin by considering how these spectra behave as functions of $\theta$ for arbitrary sets.  Some of the notable results we obtain in this direction include:
\begin{enumerate}
\item There are non-trivial (and sharp) bounds on the spectra in terms of familiar dimensions, see Propositions \ref{BB}  and \ref{BBL}, and also Corollaries \ref{BDconverge}  and \ref{BD0}.
\item The spectra are continuous in $\theta$, see Corollaries \ref{Con}  and \ref{Con2}.
\item The Assouad spectrum interpolates between upper box dimension and Assouad dimension.  In particular, as $\theta \to 0$ the spectrum always approaches the upper box dimension, and as $\theta \to 1$ the spectrum always approaches its maximal value, which is often the Assouad dimension. See Corollary \ref{BDconverge}, Corollary \ref{stayA}, and Proposition \ref{MONO}. 
\item The spectra are often, but not necessarily, monotonic, see Proposition \ref{MONO} and Section  \ref{NON-MONO2}.
\item The spectra have good distortion properties under bi-H\"older functions, which is in contrast to the Assouad dimension, see Proposition \ref{Holderthm}.
\item The Assouad and lower spectra are measureable, and of Baire class 2, when viewed as functions on the set of compact subsets of a metric space, see Theorems \ref{meas1}  and \ref{meas2}.
\item We analyse how the spectra behave under standard geometric operations such as unions, closures and products, see Propositions \ref{basic1}  and \ref{basic3}.
\end{enumerate}
In a subsequent paper \cite{fraseryu2} we compute the spectra explicitly for a range of important classes of fractal sets including: self-similar sets with overlaps, self-affine carpets, Mandelbrot percolation and Moran constructions.   In particular, the spectra can take on a range of different forms demonstrating the richness of the theory we introduce here. Although the main purpose of this article is to introduce, and conduct a thorough investigation of, our new dimension spectra, we also obtain several results as corollaries or bi-products of our work, which are not \emph{a priori} related to the spectra.  We summarise some of these  results here:
\begin{enumerate}
\item We provide new bi-H\"older distortion results for Assouad dimension, see Theorem \ref{Holderassouadcor}.  In particular, if the Assouad spectrum reaches the Assouad dimension, then we can give a bound on how the Assouad dimension distorts under a bi-H\"older map.  No such bounds exist for general sets.
\item We prove that the lower dimension is a Baire 2 function, which is sharp and improves upon a previous result of Fraser (see \cite[Question 4.6]{Fraser}) where it was only shown to be Baire 3 (see our Theorem \ref{meas3}).
\item We prove that sub-exponential spirals cannot be `unwound' to line segments via certain bi-H\"older functions, see Corollary \ref{spiralcor}.  This provides a natural extension to work of Fish and Paunescu concerning bi-Lipschitz unwinding \cite{spirals}, as well as classical unwinding theorems of Katznelson, Subhashis and Sullivan \cite{unwindspirals}.
\item We prove that a spiral with `monotonic winding' either has Assouad dimension 1 or 2, see Theorem \ref{assspiral}.
\end{enumerate}
Finally, in Section \ref{OpenSection} we collect several open questions and discuss possible directions for future work.

We begin by recalling the precise definition of the Assouad dimension, which serves to motivate our new definition.  Let $F \subseteq X$ where $X$ is a fixed metric space.  The \emph{Assouad dimension} of $F$ is defined by
\begin{eqnarray*}
\dim_\text{A} F &=&  \inf \bigg\{ \alpha \  : \   (\exists C>0) \, (\exists \rho>0) \, (\forall 0<r<R\leq \rho) \,  (\forall x \in F) \\ \\
&\,& \qquad \qquad \qquad   \qquad \qquad   N \big( B(x,R) \cap F ,r \big) \ \leq \ C \left(\frac{R}{r}\right)^\alpha \bigg\}.
\end{eqnarray*}
As described above, we will modify this definition by taking the infimum over the less restrictive condition that the scaling property only holds for scales $0<r<R\leq \rho$ satisfying a particular relationship.  For $\theta \in (0,1)$, we define
\begin{eqnarray*}
\dim_{\mathrm{A}}^\theta F &=& \inf \bigg\{ \alpha \  : \   (\exists C>0) \, (\exists \rho>0) \, (\forall 0<R\leq \rho) \,  (\forall x \in F) \\ \\
&\,& \qquad \qquad  \qquad \qquad   N \big( B(x,R) \cap F ,R^{1/\theta} \big) \ \leq \ C \left(\frac{R}{R^{1/\theta}}\right)^\alpha \bigg\}.
\end{eqnarray*}
We are particularly interested in the function $\theta \mapsto \dim_{\mathrm{A}}^\theta F$ which we refer to as the \emph{Assouad spectrum} (of $F$). For convenience we extend $\dim_\mathrm{A}^{\theta} F$ to $\theta\in(0,\infty)$ by setting $\dim_\mathrm{A}^{\theta} F=0$ for $\theta\geq 1$.

Of course there are many other ways to fix the relationship between the scales $r$ and $R$.  However, it turns out that if one wants to develop a rich theory, the most natural way to do this is what we propose here.  See the discussion in Section \ref{OpenSection} at the end of the paper for more details on this point.

The lower dimension, introduced by Larman \cite{Larman1, Larman2}, is the natural dual of the Assouad dimension.  We refer the reader to \cite{Fraser} for an in-depth discussion of the relationships and differences between these dimensions.  Let $F \subseteq X$ be as above.  The \emph{lower dimension} of $F$ is defined by
\newpage
\begin{eqnarray*}
\dim_\text{L} F &=&   \sup \bigg\{ \alpha \  : \   (\exists C>0) \, (\exists \rho>0) \, (\forall 0<r<R\leq \rho) \,  (\forall x \in F)\\ \\
&\,& \qquad \qquad \quad  \qquad  \qquad \qquad  N \big( B(x,R) \cap F,r \big) \ \geq \ C \left(\frac{R}{r}\right)^\alpha \bigg\}.
\end{eqnarray*}
Due to the local nature of this definition, it has many strange properties which may not be seen as desirable for a `dimension' to satisfy.  For example, it is not monotone as the presence of a single isolated point renders the lower dimension 0, and it may take the value 0 for an open subset of Euclidean space, see \cite[Example 2.5]{Fraser}.  One can modify the definition to get rid of these (perhaps) strange properties by defining the \emph{modified lower dimension} by
\[
\dim_\text{ML} F \ = \  \sup \left\{ \dim_\text{L} E \  : \  \emptyset \neq E \subseteq F \right\}.
\]
 For $\theta \in (0,1)$, we define
\begin{eqnarray*}
\dim_\text{L}^\theta F &=&   \sup \bigg\{ \alpha \  : \   (\exists C>0) \, (\exists \rho>0) \, (\forall 0<R\leq \rho) \,  (\forall x \in F) \\ \\
&\,& \qquad \qquad  \qquad \qquad   N \big( B(x,R) \cap F,R^{1/\theta} \big) \ \geq \ C \left(\frac{R}{R^{1/\theta}}\right)^\alpha \bigg\}.
\end{eqnarray*}
Again, we are particularly interested in the function $\theta \mapsto \dim_\text{L}^\theta F$ which we refer to as the \emph{lower spectrum} (of $F$). As before, we extend $\dim_\text{L}^{\theta} F$ to $\theta\in(0,\infty)$ by setting $\dim_\text{L}^{\theta} F=0$ for $\theta\geq 1$.  We can also modify this definition to force it to be monotone and to take on the ambient spatial dimension for open sets.  The \emph{modified lower spectrum} (of $F$) is defined by
\[
\dim_\text{ML}^\theta F \ = \  \sup \left\{ \dim_\text{L}^\theta E \  : \  \emptyset \neq E \subseteq F \right\}.
\]
The key motivation behind these new definitions is that the geometric information provided by the Assouad and lower dimensions is too coarse.  We gain more information by understanding how the inhomogeneity depends on the scales one is considering.  Alternative approaches to getting more out of these dimensions are possible.  For example, Fraser and Todd \cite{FraserTodd} recently considered a quantitative analysis of the Assouad dimension where they looked to understand how inhomogeneity varies in space, i.e. as one changes the point $x \in F$ around which one is trying to cover the set.  They found that for some natural examples this inhomogeneity could be described by a \emph{Large Deviations Principle}.  In a certain sense our approach is dual to that of \cite{FraserTodd} in that we put restrictions on \emph{scale} but still maximise over \emph{space}, whereas in \cite{FraserTodd} restrictions were put on \emph{space}, but the quantities were still maximised over all \emph{scales}.

\section{Notation and preliminaries}

 Here we summarise some notation which we will use throughout the paper.  For real-valued functions $f,g$, we write $f(x)\lesssim g(x)$ to mean that there exists a universal constant $M>0$, independent of $x$, such that $f(x)\leq Mg(x)$. Some readers may be more familiar with the notation $f(x) = O(g(x))$, which is sometimes more convenient and means the same thing.   Similarly, $f(x)\gtrsim g(x)$ means that $f(x)\geq Mg(x)$ with a universal constant independent of $x$.  If both $f(x)\lesssim g(x)$ and $f(x)\gtrsim g(x)$, then we write $f(x)\asymp g(x)$.  Generally one should think of $x$ as being the tuple consisting of all variables in the expression $f(x)$.  Usually $x$ will be a length scale but could sometimes also incorporate points in the metric space in question or  other independent length scales.

For a real number $a$, we write $a^+$ to denote a real number that is strictly larger than $a$ but can be chosen as close to $a$ as we wish. Similarly, we write $a^-$ to denote a real number that is strictly less than $a$ but can be chosen as close to $a$ as we wish. 

For real numbers $a,b$, we write $a \wedge b$ for the minimum of the two numbers and $a \vee b$ for the maximum.  Also, for a non-negative real number $x \geq 0$, we write $[x]$ for the integer part of $x$.

The Assouad and lower dimensions are closely related to the upper and lower box dimension.  The \emph{upper} and \emph{lower box dimensions} of a totally bounded set $F$ are defined by
\[
\overline{\dim}_{\mathrm{B}}F  \ = \  \limsup_{R \to 0} \   \frac{\log  N \big( F, R\big) }{-\log R} \qquad \text{ and } \qquad   \underline{\dim}_\mathrm{B} F  \ = \  \liminf_{R \to 0} \   \frac{\log  N \big( F, R\big) }{-\log R}
\]
and if the upper and lower box dimensions coincide we call the common value the \emph{box dimension} of $F$ and denote it by $\dim_\mathrm{B} F$.  We refer the reader to \cite[Chapter 3]{Falconer} for more details on the upper and lower box dimensions and their basic properties.  In particular we note the following general relationships which hold for any totally bounded set $F$:

\[
 \dim_\text{L} F \   \leq   \  \dim_\text{ML} F  \   \leq   \   \underline{\dim}_\text{B} F   \   \leq   \  \overline{\dim}_\text{B} F  \   \leq   \ \dim_\text{A} F,
\]
see \cite{Fraser, Larman1,Larman2}.  The upper and lower box dimensions will play an important role in our analysis.

Unlike the Assouad and lower dimensions, we can give simple explicit formulae for the dimension spectra.  Indeed, it follows immediately from the definitions that
\[
\dim_{\mathrm{A}}^\theta F  \ = \  \limsup_{R \to 0} \  \sup_{x \in F} \, \frac{\log  N\big(B(x,R) \cap F,R^{1/\theta}\big) }{(1-1/\theta)\log R} 
\]
and
\[
\dim_\text{L}^\theta F  \ = \  \liminf_{R \to 0} \  \inf_{x \in F} \, \frac{\log  N\big(B(x,R) \cap F, R^{1/\theta}\big) }{(1-1/\theta)\log R}.
\]
As with the definitions of Assouad and lower dimension, as well as the upper and lower box dimensions, the definition of $N(\cdot,r)$ may be replaced with a number of related concepts without altering any of the definitions.  For example, if working in $\mathbb{R}^d$ we could use the number of $r$-cubes in an $r$-mesh which intersect the given set.  Another possibility is to let $N(E,r)$ be the maximal cardinality of an $r$-packing of $E$, where an $r$-packing is a collection of closed pairwise disjoint balls of radius $r$ with centres in $E$.  Also, using the explicit formulae given above, we see that letting $R\to 0$ through an exponential sequence of scales, such as $2^{-k}$ ($k \in \mathbb{N}$), yields the same limits.  We leave it to the reader to show that these variations lead to the same dimensions and spectra and refer to \cite[Chapter 3]{Falconer}) for more details.  It is often useful to adopt these different definitions of $N(\cdot,r)$, in particular when we consider measureability properties in Section \ref{BorelSect}.

\section{Analytic properties and general bounds}

Our first proposition gives general (and sharp) bounds on the Assouad spectrum in terms of the Assouad and box dimensions.

\begin{prop} \label{BB}
Let $F$ be a totally bounded set.  Then for all $\theta \in (0,1)$ we have
\[
\overline{\dim}_{\mathrm{B}}F \  \leq \  \dim_{\mathrm{A}}^\theta F  \ \leq  \    \frac{\overline{\dim}_{\mathrm{B}}F}{{1-\theta} }  \wedge  \dim_\mathrm{A} F .
\]
\end{prop}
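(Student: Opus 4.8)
The plan is to work directly with the explicit formula
\[
\dim_{\mathrm{A}}^\theta F \ = \ \limsup_{R \to 0} \ \sup_{x \in F} \, \frac{\log N\big(B(x,R) \cap F, R^{1/\theta}\big)}{(1-1/\theta)\log R},
\]
and establish the three required inequalities separately. Since $\theta \in (0,1)$ we have $1 - 1/\theta < 0$, and $-\log R > 0$ for small $R$, so I will keep careful track of signs throughout; it is cleanest to rewrite the ratio as
\[
\frac{\log N\big(B(x,R) \cap F, R^{1/\theta}\big)}{(1/\theta - 1)(-\log R)},
\]
which has a non-negative numerator and a positive denominator.

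For the lower bound $\bd F \le \dim_{\mathrm{A}}^\theta F$: the quantity $\sup_{x\in F} N(B(x,R)\cap F, R^{1/\theta})$ is, up to a bounded covering constant, at least a fixed fraction of the \emph{global} covering number $N(F, R^{1/\theta})$ divided by $N(F,R)$, by a standard pigeonhole argument (cover $F$ by $\lesssim N(F,R)$ balls of radius $R$, so one of them must contain $\gtrsim N(F,R^{1/\theta})/N(F,R)$ of the $R^{1/\theta}$-balls needed to cover $F$). Thus the ratio is $\gtrsim \frac{\log N(F,R^{1/\theta}) - \log N(F,R) + O(1)}{(1/\theta-1)(-\log R)}$. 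Writing $r = R^{1/\theta}$, so $-\log r = \tfrac1\theta(-\log R)$ and $(1/\theta-1)(-\log R) = -\log r + \log R = (-\log r) - (-\log R)$, I take $\limsup$ as $R \to 0$ along a sequence realising the upper box dimension at scale $r$; the $\log N(F,R)$ term is controlled crudely by $\bd F$ being finite (indeed $\le \dim_{\mathrm{A}} F < \infty$, or one argues $\log N(F,R) = O(-\log R)$ regardless), and a short computation shows the $\limsup$ is at least $\bd F$. Alternatively, and perhaps more transparently, one observes $N(B(x,R)\cap F, R^{1/\theta}) \ge$ (up to constants) the number of $R^{1/\theta}$-balls meeting $B(x,R)\cap F$, and then invokes the known fact $\bd F \le \dim_{\mathrm{A}} F$ together with a scale-splitting argument. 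I expect this to be the most delicate of the three bounds, mainly in the bookkeeping of which sequence of scales is used and in reconciling local versus global covering numbers.

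For the upper bound $\dim_{\mathrm{A}}^\theta F \le \dim_{\mathrm{A}} F$: this is immediate from the definitions, since the defining condition for $\dim_{\mathrm{A}} F$ (the inequality $N(B(x,R)\cap F, r) \le C(R/r)^\alpha$ for \emph{all} pairs $0 < r < R \le \rho$) in particular holds for the restricted pairs $r = R^{1/\theta}$, so any admissible $\alpha$ for the Assouad dimension is admissible for $\dim_{\mathrm{A}}^\theta F$; hence the infimum defining $\dim_{\mathrm{A}}^\theta F$ is no larger.

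For the upper bound $\dim_{\mathrm{A}}^\theta F \le \bd F/(1-\theta)$: fix $\alpha > \bd F$; then there is $\rho > 0$ with $N(F, r) \le r^{-\alpha}$ for all $0 < r \le \rho$ (absorbing constants as usual, or keeping a constant $C$, which does not affect the limit). For $R$ small and $r = R^{1/\theta}$, the monotonicity $N(B(x,R)\cap F, r) \le N(F, r) \le r^{-\alpha} = R^{-\alpha/\theta}$. On the other hand $(R/r)^{\beta} = R^{\beta(1 - 1/\theta)} = R^{-\beta(1-\theta)/\theta}$, so to get $N(B(x,R)\cap F, r) \le (R/r)^\beta$ it suffices that $R^{-\alpha/\theta} \le R^{-\beta(1-\theta)/\theta}$, i.e. $\alpha \le \beta(1-\theta)$, i.e. $\beta \ge \alpha/(1-\theta)$. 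Letting $\alpha \downarrow \bd F$ gives $\dim_{\mathrm{A}}^\theta F \le \bd F/(1-\theta)$. Taking the minimum of the two upper bounds completes the proof; the sharpness claim in the statement can be deferred to the examples computed later in the paper.
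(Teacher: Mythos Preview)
Your approach is essentially the same as the paper's: the upper bound $\dim_{\mathrm{A}}^\theta F \le \bd F/(1-\theta)$ via $N(B(x,R)\cap F,R^{1/\theta}) \le N(F,R^{1/\theta})$, the bound $\le \dim_{\mathrm{A}}F$ by inclusion of admissible exponents, and the lower bound via the covering inequality $\sup_{x}N(B(x,R)\cap F,R^{1/\theta}) \ge N(F,R^{1/\theta})/N(F,R)$ combined with $N(F,R)\lesssim R^{-B^+}$ and $N(F,R^{1/\theta})\gtrsim R^{-B^-/\theta}$ along a suitable sequence. One small point: your pigeonhole phrasing (``one $R$-ball must \emph{contain} many of the $R^{1/\theta}$-balls from an optimal cover'') is not quite the right form of the argument---what you actually need, and what the paper writes, is the contrapositive covering estimate $N(F,R)\cdot\sup_{x}N(B(x,R)\cap F,R^{1/\theta})\ge N(F,R^{1/\theta})$, obtained by first covering $F$ with $R$-balls and then covering each of those optimally by $R^{1/\theta}$-balls; this yields the same inequality you use, so the gap is only in the wording.
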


\begin{proof}

We will write $B$ for the upper box dimension of $F$.  First of all there is a clear upper bound holding for any $x\in F$ and small enough $R$:

$$N(B(x,R)\cap F,R^{1/\theta})\leq N(F,R^{1/\theta})\lesssim R^{-B^+/\theta}.$$

This implies that
\begin{equation} \label{estimateBB1}
\sup_{x\in F}N(B(x,R),R^{1/\theta})\leq N(F,R^{1/\theta})\lesssim R^{-B^+/\theta}.
\end{equation}

Whenever we have a covering of $F$ by $R$-balls, if we further cover each $R$-ball with $R^{1/\theta}$-balls then we get a cover of $F$ by $R^{1/\theta}$-balls and an upper bound for $N(F,R^{1/\theta})$.  We can cover $F$ with $N(F,R)$ $R$-balls, and all those $R$-balls can be covered by at most $\sup_{x\in F}N(B(x,R)\cap F,R^{1/\theta})$ many $R^{\frac{1}{\theta}}$-balls, therefore $$\sup_{x\in F}N(B(x,R)\cap F,R^{1/\theta})N(F,R)\geq N(F,R^{1/\theta})$$
and so
$$\frac{N(F,R^{1/\theta})}{N(F,R)}\leq \sup_{x\in F}N(B(x,R)\cap F,R^{1/\theta})\leq N(F,R^{1/\theta}).$$
Since $N(F,R)\lesssim R^{-B^+}$ for all small enough $R$ and $N(F,R)\gtrsim R^{-B^-}$ for infinitely many $R\rightarrow 0$ we have that
\begin{equation} \label{estimateBB2}
\sup_{x\in F}N(B(x,R)\cap F,R^{1/\theta})\gtrsim N(F,R^{1/\theta})R^{B^+}\gtrsim R^{-B^-/\theta+B^+}
\end{equation}
holds for a sequence of $R \to 0$.  It now follows from  (\ref{estimateBB1}) and  (\ref{estimateBB2}) that
$$\frac{B^-/\theta-B^+}{1/\theta-1} \ \leq \  \dim_\mathrm{A}^{\theta} F \ \leq \ \frac{B^+/\theta}{1/\theta-1}  \ = \ \frac{B^+}{1-\theta}.$$
Finally, since $\dim_\text{A} F$ is a trivial upper bound for $\dim_\text{A}^{\theta} F$ for all $\theta\in (0,1)$, the desired conclusion follows.
\end{proof}

The above estimates show that if the upper box and Assouad dimensions of a set coincide, then the Assouad spectrum is constantly equal to the common value for all $\theta \in (0,1)$.  Such sets are highly homogeneous and therefore it is not surprising that the Assouad spectrum yields no new information.  Fortunately, sets with distinct upper box and Assouad dimensions abound and we will focus on such examples.  In this case, the above estimates show that, in some sense, the Assouad spectrum \emph{must} yield finer information than the upper box and Assouad dimensions alone. Indeed, the only way the spectrum can be constant is if it is constantly equal to the upper box dimension, but such behaviour would be quite striking since the definition is more similar to the Assouad dimension than the upper box dimension. In such cases, the Assouad dimension is not `seen' by any $\theta$ and this shows that to obtain the Assouad dimension, one must use  a complicated collection of pairs $(R,r)$ without any clear exponential relationship.

We also note that these general bounds are sharp.  In particular, we show that the upper bound is always attained  for a natural family of decreasing sequences, see Section \ref{SequencesSect}.  We also give examples where the \emph{lower} bound is always attained (and the Assouad dimension is strictly larger than the upper box dimension), see Example \ref{lowersharpe}.  We also note that for many natural examples the spectrum lies strictly between these upper and lower bounds.  For example, in the sequel \cite{fraseryu2} we show that the spectra necessarily lie strictly between the general upper and lower bounds for  the self-affine carpets studied by Bedford and McMullen (provided the construction has non-uniform fibres).

Letting $\theta\rightarrow 0$ in the previous result we obtain the following corollary:

\begin{cor}  \label{BDconverge}
For any totally bounded set $F$, we have
\[
\dim_{\mathrm{A}}^\theta F \to \overline{\dim}_{\mathrm{B}}F 
\]
as $\theta \to 0$.
\end{cor}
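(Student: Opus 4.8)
The plan is to derive this directly from Proposition \ref{BB} by a squeezing argument, so there is essentially no new work to do. Proposition \ref{BB} tells us that for every $\theta \in (0,1)$,
\[
\overline{\dim}_{\mathrm{B}}F \ \leq \ \dim_{\mathrm{A}}^\theta F \ \leq \ \frac{\overline{\dim}_{\mathrm{B}}F}{1-\theta} \wedge \dim_{\mathrm{A}} F \ \leq \ \frac{\overline{\dim}_{\mathrm{B}}F}{1-\theta},
\]
where in the last step I simply discard the (only helpful) term $\dim_{\mathrm{A}}F$. The left-hand side is independent of $\theta$, and the right-hand side is a continuous function of $\theta$ on $[0,1)$ taking the value $\overline{\dim}_{\mathrm{B}}F$ at $\theta = 0$.

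First I would dispose of the degenerate case $\overline{\dim}_{\mathrm{B}}F = \infty$, which can occur in a general metric space: here the lower bound already forces $\dim_{\mathrm{A}}^\theta F = \infty$ for all $\theta \in (0,1)$, so the claimed convergence holds trivially. Assuming then that $\overline{\dim}_{\mathrm{B}}F < \infty$, I would observe that $\frac{\overline{\dim}_{\mathrm{B}}F}{1-\theta} \to \overline{\dim}_{\mathrm{B}}F$ as $\theta \to 0^+$, so given $\varepsilon > 0$ there is $\theta_0 \in (0,1)$ with $\frac{\overline{\dim}_{\mathrm{B}}F}{1-\theta} \leq \overline{\dim}_{\mathrm{B}}F + \varepsilon$ for all $0 < \theta < \theta_0$. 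Combined with the uniform lower bound $\overline{\dim}_{\mathrm{B}}F \leq \dim_{\mathrm{A}}^\theta F$, this gives $\overline{\dim}_{\mathrm{B}}F \leq \dim_{\mathrm{A}}^\theta F \leq \overline{\dim}_{\mathrm{B}}F + \varepsilon$ for all $\theta < \theta_0$, which is exactly the assertion that $\dim_{\mathrm{A}}^\theta F \to \overline{\dim}_{\mathrm{B}}F$ as $\theta \to 0$.

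There is no real obstacle here: the content is entirely in Proposition \ref{BB}, and the corollary is just the limit $\theta \to 0$ of those inequalities. The only minor points worth flagging in the write-up are the infinite-dimension edge case mentioned above and the harmless removal of the $\dim_{\mathrm{A}}F$ term from the upper bound.
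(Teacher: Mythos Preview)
Your proposal is correct and matches the paper's approach exactly: the paper simply states that the corollary follows by letting $\theta \to 0$ in Proposition~\ref{BB}, without writing out any further details. Your squeezing argument is a faithful (and slightly more careful) spelling-out of precisely that one-line deduction.
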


We note that one cannot say anything as succinct about the limit of $\dim_{\mathrm{A}}^\theta F$ as $\theta \to 1$, but it is often the case that $\dim_{\mathrm{A}}^\theta F = \dim_\text{A} F$ in some (sometimes large) interval to the left of $\theta = 1$.

Proposition \ref{BB} has the following immediate corollary, which at first sight looks surprising since the definition of the Assouad spectrum does not appear to depend so sensitively on the upper box dimension.

\begin{cor} \label{BD0}
For any totally bounded set $F$ with  $\overline{\dim}_{\mathrm{B}}F=0$, we have
\[
\dim_{\mathrm{A}}^\theta F  = 0
\]
for all $\theta \in (0,1)$.
\end{cor}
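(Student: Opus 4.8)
The plan is to apply Proposition \ref{BB} directly. Since $F$ is totally bounded with $\overline{\dim}_{\mathrm{B}}F = 0$, the proposition gives, for every $\theta \in (0,1)$,
\[
0 = \overline{\dim}_{\mathrm{B}}F \ \leq \ \dim_{\mathrm{A}}^\theta F \ \leq \ \frac{\overline{\dim}_{\mathrm{B}}F}{1-\theta} \wedge \dim_{\mathrm{A}} F \ = \ \frac{0}{1-\theta} \wedge \dim_{\mathrm{A}} F \ = \ 0,
\]
and so $\dim_{\mathrm{A}}^\theta F = 0$. That is the whole argument; there is no real obstacle here, since the key work has already been done in establishing the upper bound $\overline{\dim}_{\mathrm{B}}F/(1-\theta)$ in Proposition \ref{BB}. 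The only thing worth noting is that $\theta$ is fixed in $(0,1)$, so $1-\theta$ is a fixed positive number and dividing zero by it genuinely yields zero (there is no uniformity issue as $\theta \to 1$, which is why the statement is only claimed for $\theta \in (0,1)$ rather than at the endpoint).

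If one wanted a self-contained proof not quoting Proposition \ref{BB}, one could instead reprove the relevant half: from inequality (\ref{estimateBB1}) in that proof, $\sup_{x \in F} N(B(x,R) \cap F, R^{1/\theta}) \leq N(F, R^{1/\theta}) \lesssim R^{-B^+/\theta}$ for all small $R$, where $B = \overline{\dim}_{\mathrm{B}}F = 0$, so one may take $B^+$ arbitrarily small. Feeding this into the explicit formula
\[
\dim_{\mathrm{A}}^\theta F \ = \ \limsup_{R \to 0} \ \sup_{x \in F} \frac{\log N(B(x,R) \cap F, R^{1/\theta})}{(1-1/\theta)\log R}
\]
bounds it above by $\frac{-B^+/\theta \cdot \log R}{(1-1/\theta)\log R} = \frac{B^+/\theta}{1/\theta - 1} = \frac{B^+}{1-\theta}$, which tends to $0$ as $B^+ \to 0$; combined with the trivial lower bound $\dim_{\mathrm{A}}^\theta F \geq 0$, this gives the result. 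But since Proposition \ref{BB} is already available, the one-line deduction above is the natural proof, and the hard part — such as it is — was entirely absorbed into the proof of that proposition.
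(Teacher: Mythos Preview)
Your proof is correct and matches the paper's approach exactly: the paper states this as an immediate corollary of Proposition \ref{BB}, and your one-line deduction from that proposition is precisely what is intended. The additional self-contained argument you give is correct but unnecessary here.
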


We now move towards analytic properties of the spectra.  Our first result is a technical regularity observation, which has some useful consequences.

\begin{prop}\label{Abounds}
For any set $F$ and $0<\theta_1<\theta_2<1$ we have
\[
\dim_\mathrm{A}^{\theta_2} F \left(\frac{\frac{1}{\theta_2}-1}{\frac{1}{\theta_1}-1}\right) \ \leq \  \dim_\mathrm{A}^{\theta_1} F  \ \leq  \ \dim_\mathrm{A} F \left(\frac{\frac{1}{\theta_1}-\frac{1}{\theta_2}}{\frac{1}{\theta_1}-1} \right) +\dim_\mathrm{A}^{\theta_2} F \left(\frac{\frac{1}{\theta_2}-1}{\frac{1}{\theta_1}-1} \right).
\]
\end{prop}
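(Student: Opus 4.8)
The plan is to work directly with the explicit $\limsup$-formula for the Assouad spectrum, namely
\[
\dim_{\mathrm{A}}^\theta F = \limsup_{R\to 0}\ \sup_{x\in F}\ \frac{\log N\big(B(x,R)\cap F, R^{1/\theta}\big)}{(1-1/\theta)\log R},
\]
and to relate covering numbers at three nested scales $R^{1/\theta_1} < R^{1/\theta_2} < R$ (for $R$ small). First I would fix a point $x\in F$ and a small radius $R$, and chain the covers: cover $B(x,R)\cap F$ by at most $N(B(x,R)\cap F, R^{1/\theta_2})$ balls of radius $R^{1/\theta_2}$, and then cover each such intermediate ball by $R^{1/\theta_1}$-balls. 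For the upper bound on $\dim_{\mathrm{A}}^{\theta_1}$, each $R^{1/\theta_2}$-ball can be re-covered at scale $R^{1/\theta_1} = (R^{1/\theta_2})^{\theta_2/\theta_1}$ using at most $C (R^{1/\theta_2}/R^{1/\theta_1})^{\dim_{\mathrm{A}}F + \epsilon}$ balls, by the definition of the Assouad dimension (applied with the scale pair $R^{1/\theta_2}, R^{1/\theta_1}$, both of which are $\le\rho$ once $R$ is small). Multiplying the two counts and taking logarithms divided by $(1-1/\theta_1)\log R$, the exponent of $R$ splits as a sum of two pieces whose coefficients are exactly $\dfrac{1/\theta_1 - 1/\theta_2}{1/\theta_1 - 1}$ (attached to $\dim_{\mathrm{A}}F$) and $\dfrac{1/\theta_2 - 1}{1/\theta_1 - 1}$ (attached to the $\theta_2$-count); passing to the $\limsup$ and letting $\epsilon\to 0$ gives the right-hand inequality.

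For the left-hand inequality I would instead bound the $\theta_2$-count from above by the $\theta_1$-count: since $R^{1/\theta_1} < R^{1/\theta_2}$, a cover of $B(x,R)\cap F$ by $R^{1/\theta_1}$-balls is in particular a cover by $R^{1/\theta_2}$-balls (up to a harmless constant depending only on the ambient doubling behaviour, or none at all if one uses the mesh-cube version of $N(\cdot,r)$ as discussed in Section 2), so
\[
N\big(B(x,R)\cap F, R^{1/\theta_2}\big)\ \lesssim\ N\big(B(x,R)\cap F, R^{1/\theta_1}\big).
\]
Taking $\sup_{x\in F}$, then $\log(\cdot)/\big((1-1/\theta_2)\log R\big)$, then $\limsup_{R\to 0}$, and finally multiplying through by the positive ratio $\dfrac{1/\theta_2 - 1}{1/\theta_1 - 1}$ to convert between the two normalising factors, yields $\dim_{\mathrm{A}}^{\theta_2}F\cdot\dfrac{1/\theta_2 - 1}{1/\theta_1 - 1}\le \dim_{\mathrm{A}}^{\theta_1}F$. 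Here one must be slightly careful that $\limsup$ respects the inequality after dividing by the \emph{negative} quantity $\log R$ — but this is automatic since the inequality between the covering numbers is the right way round.

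The main obstacle, I expect, is purely bookkeeping: keeping the three exponents $1/\theta_1$, $1/\theta_2$, $1$ straight through the logarithmic normalisations, and verifying that the two coefficients in the upper bound genuinely sum in the claimed way rather than, say, overcounting the middle scale. A secondary (minor) point is justifying the interchange of "$\limsup$ of a product" with "sum of $\limsup$s" — for the upper bound we only need $\limsup(fg)\le (\limsup f)(\limsup\text{-type bound on }g)$ where the $g$-factor is controlled by a genuine (scale-uniform) Assouad estimate rather than a $\limsup$, so subadditivity of $\limsup$ suffices and no delicate argument about matching subsequences is required. No compactness or regularity of $F$ is needed, consistent with the statement being for arbitrary $F$.
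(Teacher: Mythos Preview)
Your proposal is correct and follows essentially the same approach as the paper: both arguments chain covers through the intermediate scale $R^{1/\theta_2}$, using monotonicity of $N(\cdot,r)$ in $r$ for the lower bound and the Assouad dimension to control the passage from $R^{1/\theta_2}$-balls to $R^{1/\theta_1}$-balls for the upper bound. The only cosmetic difference is that the paper phrases things via ``$\gtrsim$ along a sequence / $\lesssim$ for all small $R$'' while you work directly with the $\limsup$-formula; also, your caveat about a ``harmless constant'' in the monotonicity step is unnecessary since $N(E,r')\le N(E,r)$ for $r<r'$ holds exactly.
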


\begin{proof}

Following similar ideas as in the proofs above, for $0<\theta_1<\theta_2<1$, we have for any $R>0$
$$\sup_{x\in F}N(B(x,R)\cap F,R^{1/\theta_1})\geq \sup_{x\in F}N(B(x,R)\cap F,R^{1/\theta_2}).$$

Within $F$ we can cover any $R^{\frac{1}{\theta_2}}$-ball with 
\[
\lesssim \ \left(\frac{R^{\frac{1}{\theta_2}}}{R^{\frac{1}{\theta_1}}}\right)^{\dim_\mathrm{A} F^+}
\]
many $R^{\frac{1}{\theta_1}}$-balls.  Then for small enough $R>0$ we have
$$\left(\frac{R^{\frac{1}{\theta_2}}}{R^{\frac{1}{\theta_1}}}\right)^{\dim_\text{A} F^+}\sup_{x\in F}N(B(x,R)\cap F,R^{1/\theta_2}) \ \geq  \ \sup_{x\in F}N(B(x,R)\cap F,R^{1/\theta_1}).$$
Therefore,
\begin{eqnarray*}
R^{\dim_\text{A} F^+(\frac{1}{\theta_2}-\frac{1}{\theta_1})}\sup_{x\in F}N(B(x,R)\cap F,R^{1/\theta_2}) & \geq & \sup_{x\in F}N(B(x,R)\cap F,R^{1/\theta_1}) \\ \\ 
 & \geq & \sup_{x\in F}N(B(x,R)\cap F,R^{1/\theta_2})
\end{eqnarray*}
Also notice that for a sequence of $R\to 0$ we have
$$\sup_{x\in F}N(B(x,R)\cap F,R^{1/\theta_2})\gtrsim R^{(1-1/\theta_2)\dim_\mathrm{A}^{\theta_2} F^-}$$
as well as for any sufficiently small $R>0$ we have
$$\sup_{x\in F}N(B(x,R)\cap F,R^{1/\theta_2})\lesssim R^{(1-1/\theta_2)\dim_\mathrm{A}^{\theta_2} F^+}.$$
The desired bounds then follow immediately from the definitions.
\end{proof}

The bounds in Proposition \ref{Abounds} have some very useful consequences, such as  continuity of the spectrum.

\begin{cor}\label{Con}
For any $0<\theta_1 \leq \theta_2<1$ we have
\[
\lvert \dim_\mathrm{A}^{\theta_1} F - \dim_\mathrm{A}^{\theta_2} F \rvert \ \leq \ \frac{\dim_\mathrm{A}F}{\theta_2(1-\theta_1)} \lvert{\theta_1} - {\theta_2} \rvert.
\]
In particular, the function $\theta \mapsto \dim_{\mathrm{A}}^\theta F$ is continuous in $\theta \in (0,1)$ and for any $\varepsilon>0$, the function $\theta \mapsto \dim_{\mathrm{A}}^\theta F$ is Lipschitz  on the interval $[\varepsilon, 1-\varepsilon]$.
\end{cor}

\begin{proof}
This follows immediately from the bounds presented in Proposition \ref{Abounds}.
\end{proof}

Continuity of the dimension spectra is a useful property, especially when dealing with random fractals such as Mandelbrot percolation: a \emph{continuous} function is determined by its values on a \emph{countable}  dense set.  We point out that the spectra are not any more regular than continuous, as for most of our examples the spectra exhibit phase transitions where they fail to be differentiable.  

Another useful consequence of Proposition \ref{Abounds} is that `if the spectrum reaches the Assouad dimension of $F$, then it stays there'.

\begin{cor}\label{stayA}
If for some $\theta \in (0,1)$, we have $\dim_{\mathrm{A}}^\theta F = \dim_\mathrm{A} F$, then
\[
\dim_\mathrm{A}^{\theta'} F = \dim_\mathrm{A} F
\]
for all $\theta' \in [ \theta,1)$.
\end{cor}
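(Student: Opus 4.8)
The plan is to derive this directly from the left-hand inequality in Proposition \ref{Abounds}, which already has exactly the right shape. Suppose $\dim_\mathrm{A}^\theta F = \dim_\mathrm{A} F$ for some $\theta \in (0,1)$, and let $\theta' \in (\theta, 1)$. Applying Proposition \ref{Abounds} with $\theta_1 = \theta$ and $\theta_2 = \theta'$, the left inequality reads
\[
\dim_\mathrm{A}^{\theta'} F \left(\frac{\frac{1}{\theta'}-1}{\frac{1}{\theta}-1}\right) \ \leq \ \dim_\mathrm{A}^{\theta} F \ = \ \dim_\mathrm{A} F,
\]
so that
\[
\dim_\mathrm{A}^{\theta'} F \ \leq \ \dim_\mathrm{A} F \left(\frac{\frac{1}{\theta}-1}{\frac{1}{\theta'}-1}\right).
\]
Since $\theta < \theta' < 1$ we have $\frac{1}{\theta} - 1 > \frac{1}{\theta'} - 1 > 0$, hence the bracketed factor is strictly greater than $1$, and this gives $\dim_\mathrm{A}^{\theta'} F \leq \dim_\mathrm{A} F \cdot (\text{something} > 1)$, which is weaker than what we want. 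So the left inequality alone, used this way, does not suffice — I need the reverse direction.

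The correct move is to observe that $\dim_\mathrm{A}^{\theta'} F \leq \dim_\mathrm{A} F$ always holds (this is the trivial upper bound noted in the proof of Proposition \ref{BB}), so it remains only to prove $\dim_\mathrm{A}^{\theta'} F \geq \dim_\mathrm{A} F$. For this I apply the left-hand inequality of Proposition \ref{Abounds} the other way around, namely with $\theta_1 = \theta$ and $\theta_2 = \theta'$ read as a lower bound on $\dim_\mathrm{A}^\theta F$ in terms of $\dim_\mathrm{A}^{\theta'} F$. Indeed that inequality is
\[
\dim_\mathrm{A} F \ = \ \dim_\mathrm{A}^{\theta} F \ \geq \ \dim_\mathrm{A}^{\theta'} F \left(\frac{\frac{1}{\theta'}-1}{\frac{1}{\theta}-1}\right),
\]
which still points the wrong way. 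The genuinely useful inequality is the \emph{upper} bound half of Proposition \ref{Abounds}: with $\theta_1 = \theta$, $\theta_2 = \theta'$,
\[
\dim_\mathrm{A} F \ = \ \dim_\mathrm{A}^{\theta} F \ \leq \ \dim_\mathrm{A} F \left(\frac{\frac{1}{\theta}-\frac{1}{\theta'}}{\frac{1}{\theta}-1}\right) + \dim_\mathrm{A}^{\theta'} F \left(\frac{\frac{1}{\theta'}-1}{\frac{1}{\theta}-1}\right).
\]
Writing $a = \frac{1}{\theta}-1$, $b = \frac{1}{\theta'}-1$, so $0 < b < a$ and $\frac{1}{\theta}-\frac{1}{\theta'} = a - b$, this says $\dim_\mathrm{A} F \leq \dim_\mathrm{A} F \cdot \frac{a-b}{a} + \dim_\mathrm{A}^{\theta'} F \cdot \frac{b}{a}$, i.e. $\dim_\mathrm{A} F \cdot \frac{b}{a} \leq \dim_\mathrm{A}^{\theta'} F \cdot \frac{b}{a}$, and since $\frac{b}{a} > 0$ we conclude $\dim_\mathrm{A}^{\theta'} F \geq \dim_\mathrm{A} F$.

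Combining $\dim_\mathrm{A}^{\theta'} F \geq \dim_\mathrm{A} F$ with the trivial bound $\dim_\mathrm{A}^{\theta'} F \leq \dim_\mathrm{A} F$ yields equality for every $\theta' \in (\theta, 1)$; together with the hypothesis at $\theta$ itself this covers all of $[\theta, 1)$. There is essentially no obstacle here: the only subtlety is choosing the right one of the two inequalities in Proposition \ref{Abounds} and reading it with the correct assignment of $\theta_1, \theta_2$ — the upper bound, not the lower bound, is the one that propagates the maximal value upward. The sign bookkeeping ($0 < b < a$ so the coefficients are positive and can be cancelled) is routine. I would present the proof as: invoke the trivial upper bound, then quote the right-hand inequality of Proposition \ref{Abounds} with $\theta_1 = \theta$, $\theta_2 = \theta' \in (\theta,1)$, rearrange using $\dim_\mathrm{A}^\theta F = \dim_\mathrm{A} F$ to extract $\dim_\mathrm{A}^{\theta'} F \geq \dim_\mathrm{A} F$, and conclude.
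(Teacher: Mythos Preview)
Your proposal is correct and, once you arrive at the right inequality, follows exactly the same route as the paper: apply the right-hand (upper) bound of Proposition \ref{Abounds} with $\theta_1 = \theta$ and $\theta_2 = \theta'$, rearrange using the hypothesis $\dim_\mathrm{A}^{\theta} F = \dim_\mathrm{A} F$ to obtain $\dim_\mathrm{A}^{\theta'} F \geq \dim_\mathrm{A} F$, and combine with the trivial upper bound from Proposition \ref{BB}. The paper's proof is just a two-line version of your final paragraph; the exploratory false starts with the left-hand inequality can be dropped.
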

\begin{proof}
Starting with the right hand inequality from Proposition \ref{Abounds}, assume that $\dim_\mathrm{A}^{\theta_1} F = \dim_\mathrm{A} F$.  This immediately gives that $\dim_\mathrm{A}^{\theta_2} F \geq \dim_\mathrm{A} F$ which, together with Proposition \ref{BB}, proves the result.
\end{proof}

Another natural question concerns monotonicity.  Indeed, all of the `natural' examples we consider have monotone spectra, i.e. the spectrum is non-decreasing in $\theta$.  Surprisingly, this is not always the case.  We exhibit this by constructing an example in Section \ref{NON-MONO2}.  The following result shows that one does have some sort of `quasi-monotonicity', however.

\begin{prop}\label{MONO}
For any $F$ and $0<\theta_1 < \theta_2 < 1$ we have
$$\dim_\mathrm{A}^{\theta_1} F \ \leq \  \left(\frac{1-\theta_2}{1-\theta_1} \right) \dim_\mathrm{A}^{\theta_2} F \, + \, \left( \frac{\theta_2-\theta_1}{1-\theta_1} \right) \dim_\mathrm{A}^{\theta_1/\theta_2} F.$$
In particular, by setting $\theta_2=\sqrt{\theta_1}$, we have
\[
\dim_\mathrm{A}^{\theta_1}F \  \leq   \ \dim_\mathrm{A}^{\sqrt{\theta_1}}F
\]
for any $\theta_1\in (0,1)$.  Furthermore,  this implies that for any $\theta\in(0,1)$, we can find $\theta'$ arbitrarily close to $1$ such that $\dim_\mathrm{A}^{\theta'}F\geq \dim_\mathrm{A}^{\theta}F$.
\end{prop}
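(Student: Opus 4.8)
The plan is to prove the three-scale inequality
\[
\dim_\mathrm{A}^{\theta_1} F \ \leq \  \left(\frac{1-\theta_2}{1-\theta_1} \right) \dim_\mathrm{A}^{\theta_2} F \, + \, \left( \frac{\theta_2-\theta_1}{1-\theta_1} \right) \dim_\mathrm{A}^{\theta_1/\theta_2} F
\]
by the same covering-decomposition strategy used in Propositions \ref{BB} and \ref{Abounds}, but now using \emph{three} nested scales rather than two.  The natural choice is $R$, the intermediate scale $r=R^{1/\theta_2}$, and the finest scale $R^{1/\theta_1}$.  Observe that $R^{1/\theta_1} = r^{\theta_2/\theta_1} = r^{1/(\theta_1/\theta_2)}$, so passing from scale $r$ down to scale $R^{1/\theta_1}$ is governed by the exponent $\theta_1/\theta_2 \in (0,1)$, which is why that value appears in the bound.

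First I would cover $B(x,R)\cap F$ by $\lesssim N(B(x,R)\cap F, r)$ balls of radius $r=R^{1/\theta_2}$; by definition of $\dim_\mathrm{A}^{\theta_2} F$ this number is $\lesssim (R/R^{1/\theta_2})^{\dim_\mathrm{A}^{\theta_2} F^+}$ for all small $R$.  Next, each such $r$-ball $B(y,r)$ with $y\in F$ can be covered by $\lesssim N(B(y,r)\cap F, r^{1/(\theta_1/\theta_2)})$ balls of radius $r^{\theta_2/\theta_1}=R^{1/\theta_1}$, and by definition of $\dim_\mathrm{A}^{\theta_1/\theta_2} F$ this is $\lesssim (r/r^{\theta_2/\theta_1})^{\dim_\mathrm{A}^{\theta_1/\theta_2} F^+}$ for all small $r$ (equivalently small $R$).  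Multiplying these two bounds gives
\[
\sup_{x\in F} N(B(x,R)\cap F, R^{1/\theta_1}) \ \lesssim \ R^{(1-1/\theta_2)\dim_\mathrm{A}^{\theta_2} F^+}\, R^{(1/\theta_2-1/\theta_1)\dim_\mathrm{A}^{\theta_1/\theta_2} F^+},
\]
and comparing the exponent here with $(1-1/\theta_1)\dim_\mathrm{A}^{\theta_1} F$ (the growth rate extracted from the explicit $\limsup$ formula for $\dim_\mathrm{A}^{\theta_1} F$) yields, after dividing by $1/\theta_1-1$ and letting the $+$'s tend to the true values, exactly the claimed weighted average — one just checks the algebraic identity that $(1/\theta_2-1)/(1/\theta_1-1)=(1-\theta_2)/(1-\theta_1)\cdot(\theta_1/\theta_2)\cdot\ldots$; in fact the coefficients $\tfrac{1-\theta_2}{1-\theta_1}$ and $\tfrac{\theta_2-\theta_1}{1-\theta_1}$ are precisely $(1/\theta_2-1)/(1/\theta_1-1)$ and $(1/\theta_1-1/\theta_2)/(1/\theta_1-1)$ up to the factor $\theta_1$, and they sum to $1$.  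The two specializations are then immediate: setting $\theta_2=\sqrt{\theta_1}$ makes $\theta_1/\theta_2=\sqrt{\theta_1}=\theta_2$, so both terms on the right carry $\dim_\mathrm{A}^{\sqrt{\theta_1}}F$ and the convex combination collapses to $\dim_\mathrm{A}^{\sqrt{\theta_1}}F$; and iterating $\theta\mapsto\sqrt\theta$ produces a sequence $\theta^{1/2^n}\to 1$ along which the spectrum is $\geq \dim_\mathrm{A}^\theta F$, giving the final assertion.

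The main subtlety — and the step I would be most careful about — is the middle covering step: I need the bound on $N(B(y,r)\cap F, R^{1/\theta_1})$ to hold \emph{uniformly} over all the centres $y$ of the $r$-balls produced in the first step.  This is fine because $\dim_\mathrm{A}^{\theta_1/\theta_2} F$ is defined with a supremum over all $x\in F$ and the centres $y$ can be taken in $F$ (using the standard $N(\cdot,r)$-versus-packing-number interchangeability noted in Section 2, or simply by enlarging balls by a bounded factor), so a single implied constant works for all of them; the "for all small $R$" quantifier is also uniform. One should also note that the first bound holds for all small $R$ while the lower estimate we are comparing against in the $\limsup$ is realized along a sequence $R\to 0$, which is exactly the asymmetry already handled in the proof of Proposition \ref{Abounds}, so no new difficulty arises there. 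Everything else is the routine exponent bookkeeping.
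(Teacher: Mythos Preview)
Your covering strategy is sound, but your choice of intermediate scale does \emph{not} yield the stated inequality, and the place where you write ``one just checks the algebraic identity \ldots\ up to the factor $\theta_1$'' is exactly where the argument breaks. If you carry out the division honestly, your bound
\[
\sup_{x\in F} N\bigl(B(x,R)\cap F, R^{1/\theta_1}\bigr) \ \lesssim \ R^{(1-1/\theta_2)\dim_\mathrm{A}^{\theta_2} F^+}\, R^{(1/\theta_2-1/\theta_1)\dim_\mathrm{A}^{\theta_1/\theta_2} F^+}
\]
gives, after dividing by $1/\theta_1-1$,
\[
\dim_\mathrm{A}^{\theta_1} F \ \leq \ \frac{\theta_1(1-\theta_2)}{\theta_2(1-\theta_1)}\,\dim_\mathrm{A}^{\theta_2} F \ + \ \frac{\theta_2-\theta_1}{\theta_2(1-\theta_1)}\,\dim_\mathrm{A}^{\theta_1/\theta_2} F,
\]
which is a perfectly valid convex combination (the weights sum to $1$), but it is \emph{not} the inequality in the proposition: your first coefficient is too small by a factor $\theta_1/\theta_2$ and your second too large by a factor $1/\theta_2$. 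These two inequalities are genuinely different and neither implies the other in general.

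The fix is to reverse the order of the two covering steps, equivalently to use the intermediate scale $R' = R^{\theta_2/\theta_1}$ rather than $R^{1/\theta_2}$. That is: first cover the $R$-ball by $R'$-balls using the $\theta_1/\theta_2$-spectrum (since $R' = R^{1/(\theta_1/\theta_2)}$), and \emph{then} cover each $R'$-ball by $R^{1/\theta_1}$-balls using the $\theta_2$-spectrum (since $(R')^{1/\theta_2} = R^{1/\theta_1}$). Running the same exponent bookkeeping with this decomposition produces exactly the coefficients $\tfrac{1-\theta_2}{1-\theta_1}$ and $\tfrac{\theta_2-\theta_1}{1-\theta_1}$. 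Your treatment of the specialisation $\theta_2=\sqrt{\theta_1}$ and the iteration $\theta\mapsto\theta^{1/2^n}$ is fine (and indeed still works with your own inequality, since in that case both weights multiply the same quantity), but the main display requires the other intermediate scale.
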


\begin{proof}
Fix $0<\theta_1<\theta_2<1$ and notice that $0<\frac{\theta_1}{\theta_2}<1$.  For sufficiently small $R>0$, we  have:
$$\sup_{x\in F}N(B(x,R)\cap F,R^{1/\theta_i})\lesssim \left(R^{1-\frac{1}{\theta_i}}\right)^{{\dim_\mathrm{A}^{\theta_i} F}^+}$$ 
for $i = 1,2$.  We can also find infinitely many $R\to 0$ such that:
$$\sup_{x\in F}N(B(x,R)\cap F,R^{1/\theta_i})\gtrsim \left(R^{1-\frac{1}{\theta_i}}\right)^{{\dim_\mathrm{A}^{\theta_i} F}^-}.$$
Let $R'=R^{\theta_2/\theta_1}$, and observe that $(R')^{1/\theta_2}=R^{1/\theta_1}$. We can cover any $R$ ball with at most $\lesssim (R/R')^{\dim_\mathrm{A}^{\theta_1/\theta_2}F^+}$ balls with radius $R'$ for small enough $R$.

Then we need no more than $\sup_{x\in F}N(B(x,R')\cap F,R^{1/\theta_1})$ balls with radius $R^{\theta_1}$ to cover any $R'$-ball.

Given an arbitrary $R$-ball, first cover it with $R'$-balls, and then cover those $R'$-balls by $R^{\theta_1}$-balls using optimal covers as indicated above.  This yields  
$$  \sup_{x\in F}N(B(x,R)\cap F,R^{1/\theta_1})   \ \leq \     \left(\frac{R}{R'}\right)^{\dim_\mathrm{A}^{\theta_1/\theta_2} F^+} \sup_{x\in F}N(B(x,R')\cap F,R^{1/\theta_1}) $$
but since $(R')^{1/\theta_2}=R^{1/\theta_1}$ we have
$$\sup_{x\in F}N(B(x,R')\cap F,R^{1/\theta_1})\lesssim \left((R')^{1-\frac{1}{\theta_2}}\right)^{{\dim_\mathrm{A}^{\theta_2} F}^+}$$ for all $R'$ small enough and also
$$\sup_{x\in F}N(B(x,R)\cap F,R^{1/\theta_1})\gtrsim \left(R^{1-\frac{1}{\theta_1}}\right)^{{\dim_\mathrm{A}^{\theta_1} F}^-}$$ for some arbitrarily small $R$.

Therefore for some arbitrarily small $R$ we get:
$$
\left(\frac{R'}{R}\right)^{\dim_\mathrm{A}^{\theta_1/\theta_2} F^+} \left(R^{1-\frac{1}{\theta_1}}\right)^{{\dim_\mathrm{A}^{\theta_1} F}^-}
 \ \lesssim  \ \left((R')^{1-\frac{1}{\theta_2}}\right)^{\dim_\mathrm{A}^{\theta_2} F^+}$$
and so replacing $R'$ by  $R^{\theta_2/\theta_1}$, taking logs, and dividing through by $\log R$ yields
\[
\left( \frac{\theta_2}{\theta_1} - 1\right) \dim_\mathrm{A}^{\theta_1/\theta_2}F^+\ + \ \left( 1 - \frac{1}{\theta_1}\right) \dim_\mathrm{A}^{\theta_1}F^- \ \geq \ \frac{\theta_2}{\theta_1}\left(1- \frac{1}{\theta_2} \right) \dim_\mathrm{A}^{\theta_2}F^+.
\]
This, in turn, yields
\[
\dim_\mathrm{A}^{\theta_1} F \ \leq \  \left(\frac{1-\theta_2}{1-\theta_1} \right) \dim_\mathrm{A}^{\theta_2} F \, + \, \left( \frac{\theta_2-\theta_1}{1-\theta_1} \right) \dim_\mathrm{A}^{\theta_1/\theta_2} F
\]
as required.
\end{proof}

\begin{rem}
The strategy of the above proof is to cover a large ball with middle sized balls and further cover the middle sized balls with smaller balls.  This can be generalized to arbitrarily many levels of covering to obtain more general results.

We can cover an $R$-ball with $R^{\frac{1}{\theta_n}}$-balls, then each of these balls with $R^{\frac{1}{\theta_{n-1}}}$-balls and so on and then applying the same proof strategy as above we end up with the following inequality: for 
$$0<\theta_1<\theta_2< \cdots <\theta_n<1$$
we have
$$
\dim_\mathrm{A}^{\theta_1} F
 \ \leq \ \left(\frac{1-\theta_n}{1-\theta_1} \right)
\dim_\mathrm{A}^{\theta_n} F \ + \  \sum_{i=2}^{n} 
\left(\frac{\theta_i-\theta_{i-1}}{1-\theta_1} \right)
\dim_\mathrm{A}^{\theta_{i-1}/\theta_{i}} F
   $$

By setting $\theta_i=\theta_1^{\frac{n-i+1}{n}}$ we end up with:

$$\dim_\mathrm{A}^{\theta_1} F \ \leq  \ \dim_\mathrm{A}^{\sqrt[n]{\theta_1}} F$$ 

for any $\theta_1\in (0,1)$ and any natural number $n$, which is a slightly stronger result.
\end{rem}

We will now discuss the analogous properties for the lower (and modified lower)  spectrum.

\begin{prop} \label{BBL}
Let $F$ be a totally bounded set.  Then for all $\theta \in (0,1)$ we have
\[
\dim_\mathrm{L} F  \  \leq \  \dim_\mathrm{L}^\theta F  \ \leq  \     \underline{\dim}_\mathrm{B} F
\]
and
\[
\dim_\mathrm{ML} F  \  \leq \  \dim_\mathrm{ML}^\theta F  \ \leq  \     \underline{\dim}_\mathrm{B} F
\]
\end{prop}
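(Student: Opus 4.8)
The plan is to mirror the structure of Proposition~\ref{BB}, but working with the ``$\inf_{x\in F}$'' version of the counting function rather than the ``$\sup_{x\in F}$'' version. Recall the explicit formula
\[
\dim_\mathrm{L}^\theta F \ = \  \liminf_{R \to 0} \  \inf_{x \in F} \, \frac{\log  N\big(B(x,R) \cap F, R^{1/\theta}\big) }{(1-1/\theta)\log R},
\]
and note that since $(1-1/\theta)<0$, lower bounds on $N(B(x,R)\cap F, R^{1/\theta})$ give \emph{upper} bounds on the Rayleigh quotient and vice versa. First I would establish $\dim_\mathrm{L} F \le \dim_\mathrm{L}^\theta F$: this is immediate from the definitions, since the defining condition for the lower dimension (valid for \emph{all} pairs $0<r<R\le\rho$) in particular holds for the restricted pairs $(R^{1/\theta},R)$, so any admissible exponent $\alpha$ for $\dim_\mathrm{L}$ is admissible for $\dim_\mathrm{L}^\theta$.

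For the upper bound $\dim_\mathrm{L}^\theta F \le \underline{\dim}_\mathrm{B} F$, the key step is a two-scale covering decomposition analogous to the one in Proposition~\ref{BB}. Fix small $R>0$ and cover $F$ by $N(F,R)$ balls of radius $R$; each such ball $B(x,R)$ meets $F$ and can be covered by $N(B(x,R)\cap F, R^{1/\theta})$ balls of radius $R^{1/\theta}$. Hence
\[
N(F,R^{1/\theta}) \ \lesssim \ N(F,R)\cdot \inf_{x\in F} N\big(B(x,R)\cap F, R^{1/\theta}\big)\cdot(\text{something}),
\]
so more carefully: $N(F,R^{1/\theta}) \le N(F,R)\,\sup_{x\in F}N(B(x,R)\cap F,R^{1/\theta})$, which is the wrong direction for controlling the infimum. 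So instead I would argue directly: choose a sequence $R\to 0$ realising the liminf defining $\dim_\mathrm{L}^\theta F$, and for each such $R$ pick $x=x_R\in F$ attaining (up to a constant) the infimum over $x$. Then for the single ball $B(x_R, R^{1/\theta})$ one has $N(B(x_R,R^{1/\theta})\cap F, R^{1/\theta}) = 1$, while iterating the covering of $B(x_R,R)\cap F$ down to scale $R^{1/\theta}$ inside the larger ball $B(x_R, R)$ and comparing with a global cover of $F$ at scale $R^{1/\theta}$, together with $N(F,R^{1/\theta}) \gtrsim R^{-(\underline{\dim}_\mathrm{B}F)^-/\theta}$ along a suitable subsequence and $N(F,R)\lesssim R^{-(\bd F)^+}$, should pin the growth of $N(B(x_R,R)\cap F,R^{1/\theta})$ from below. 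Taking logs, dividing by $(1-1/\theta)\log R<0$, and letting $R\to0$ through the chosen subsequence yields $\dim_\mathrm{L}^\theta F \le \underline{\dim}_\mathrm{B}F$.

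For the modified lower spectrum, the two inequalities follow by applying the first part to each nonempty subset $E\subseteq F$ and taking suprema: $\dim_\mathrm{L} E \le \dim_\mathrm{L}^\theta E$ gives $\dim_\mathrm{ML}F = \sup_E \dim_\mathrm{L}E \le \sup_E \dim_\mathrm{L}^\theta E = \dim_\mathrm{ML}^\theta F$, and $\dim_\mathrm{L}^\theta E \le \underline{\dim}_\mathrm{B}E \le \underline{\dim}_\mathrm{B}F$ (monotonicity of lower box dimension) gives $\dim_\mathrm{ML}^\theta F \le \underline{\dim}_\mathrm{B}F$. The main obstacle I anticipate is getting the covering inequality in the right direction for the \emph{infimum} over $x$: unlike the Assouad case, where covering $F$ by $R$-balls and then each by $R^{1/\theta}$-balls naturally bounds the supremum, here one must be careful to extract a single well-chosen point $x_R$ and work locally, using that any individual ball of the larger radius is counted exactly once at the smaller scale, so that the local lower count cannot be too small without contradicting the known lower box growth of $F$ along a subsequence. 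Once that subsequence bookkeeping is set up correctly, the rest is routine.
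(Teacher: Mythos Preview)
Your lower bounds (for both the lower and modified lower spectra) are fine and match the paper: they follow immediately from the definitions, and the modified version follows by taking suprema over subsets.

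The upper bound argument, however, has a genuine gap. You correctly observe that the covering inequality
\[
N(F,R^{1/\theta}) \ \le \ N(F,R)\, \sup_{x\in F} N\big(B(x,R)\cap F, R^{1/\theta}\big)
\]
goes the wrong way for controlling the \emph{infimum}. But your proposed fix (``argue directly'' by picking $x_R$ attaining the infimum and then ``iterating the covering \dots\ comparing with a global cover \dots\ should pin the growth from below'') is not an argument: it never produces an actual upper bound on $\inf_x N(B(x,R)\cap F,R^{1/\theta})$. In fact you say you want to pin the growth ``from below'', but to prove $\dim_\mathrm{L}^\theta F \le \underline{\dim}_\mathrm{B}F$ you need an \emph{upper} bound on this infimum along some sequence $R\to 0$. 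Pigeonhole on an optimal $R$-cover of $F$ does not straightforwardly work either, since the $R^{1/\theta}$-covers of the individual $R$-balls may overlap.

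The paper's key idea, which you are missing, is to switch from covering numbers to \emph{packing} numbers $M(\cdot,r)$. Packings assemble in the right direction: take an optimal $2R$-packing of $F$, and inside each packing ball build an optimal $R^{1/\theta}$-packing of the concentric $R$-ball. The resulting small balls are pairwise disjoint, so
\[
M(F,2R)\,\inf_{x\in F} M\big(B(x,R),R^{1/\theta}\big) \ \le \ M(F,R^{1/\theta}),
\]
which is exactly the inequality you need. Combining this with $M(F,2R)\gtrsim R^{-b^-}$ for all small $R$ and $M(F,R^{1/\theta})\lesssim R^{-b^+/\theta}$ along a subsequence (both from the definition of lower box dimension) gives $\dim_\mathrm{L}^\theta F \le (b^+/\theta - b^-)/(1/\theta - 1)$, hence $\le \underline{\dim}_\mathrm{B}F$.
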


\begin{proof}
First note that it follows immediately from the definitions that $\dim_\mathrm{L} F  \  \leq \  \dim_\mathrm{L}^\theta F $ and therefore also $\dim_\mathrm{ML} F  \  \leq \  \dim_\mathrm{ML}^\theta F $.

We will now prove the upper bounds and during the proof we will write $b$ for the lower box dimension of $F$.  Fix $\theta \in (0,1)$ and $R \in (0,1)$.  Let $M(E,r)$ denote the largest possible cardinality of an $r$-packing of a set $E$ by closed balls of radius $r$. Take an optimal $2R$-packing of $F$ by closed balls and then inside each of these balls construct an optimal $R^{1/\theta}$-packing of the smaller ball centered at the same point but with radius $R$.  The resulting $R^{1/\theta}$-balls are centered in $F$ and are pairwise disjoint and, therefore, one obtains an $R^{1/\theta}$-packing of $F$ by more than
\[
M(F,2R) \, \inf_{x \in F} M\left(B(x,R), \, R^{1/\theta} \right)
\]
balls.  This yields
\[
 \inf_{x \in F} M\left(B(x,R), \, R^{1/\theta} \right)  \ \leq  \ \frac{M(F,R^{1/\theta})}{M(F,2R)} \ \lesssim \ \frac{R^{-b^+/\theta}}{R^{-b^-}} \ = \ R^{-(b^+/\theta-b^-)}
\]
for infinitely many $R \to 0$. It then follows from the definitions
\[
\dim_\mathrm{L}^\theta F  \ \leq  \ \frac{b^+/\theta-b^-}{1/\theta-1}
\]
from which the desired upper bound follows.  This also passes to the modified lower spectrum, completing the proof.
\end{proof}

Since the lower dimension is bounded above by the modified lower dimension (and the Hausdorff dimension if the set is compact, see \cite{Larman1})  it is natural to ask if this is also (uniformly) true for the lower spectrum, i.e., if the upper bounds in Proposition \ref{BBL} can be improved?  Perhaps surprisingly, this is not the case.  In particular, for self-affine carpets the lower spectrum approaches the box dimension as $\theta \to 0$, see our sequel paper \cite{fraseryu2}.

\begin{thm}  \label{Con2}
The functions $\theta \mapsto \dim_\mathrm{L}^{\theta} F$ and $\theta \mapsto \dim_\mathrm{ML}^{\theta} F$  are continuous in $\theta \in (0,1)$.  Moreover, they are Lipschitz on any closed subinterval of $(0,1)$.
\end{thm}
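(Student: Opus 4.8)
The plan is to follow the template used for the Assouad spectrum. First I would establish the exact analogue of Proposition \ref{Abounds}: for any set $F$ and any $0<\theta_1<\theta_2<1$,
\[
\dim_{\mathrm{L}}^{\theta_2} F\left(\frac{\frac{1}{\theta_2}-1}{\frac{1}{\theta_1}-1}\right) \ \leq \ \dim_{\mathrm{L}}^{\theta_1} F \ \leq \ \dim_{\mathrm{A}}^{\theta_1/\theta_2} F\left(\frac{\frac{1}{\theta_1}-\frac{1}{\theta_2}}{\frac{1}{\theta_1}-1}\right) + \dim_{\mathrm{L}}^{\theta_2} F\left(\frac{\frac{1}{\theta_2}-1}{\frac{1}{\theta_1}-1}\right),
\]
where $\dim_{\mathrm{A}}^{\theta_1/\theta_2}F$ may be replaced by the cruder upper bound $\dim_{\mathrm{A}}F$ if one prefers. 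The left-hand inequality comes from the pointwise estimate $N(B(x,R)\cap F, R^{1/\theta_1})\geq N(B(x,R)\cap F, R^{1/\theta_2})$ (the scale $R^{1/\theta_1}$ being finer), which holds for every small $R$ and every $x\in F$; dividing by $(1-1/\theta_i)\log R>0$ and feeding this into the explicit $\liminf$--$\inf$ formula for $\dim_{\mathrm{L}}^{\theta}$ gives the bound directly. For the right-hand inequality I would, given an $R$-ball, cover $B(x,R)\cap F$ by $N(B(x,R)\cap F, R^{1/\theta_2})$ balls of radius $R^{1/\theta_2}$ and then cover each of these (intersected with $F$) by $\lesssim (R^{1/\theta_2}/R^{1/\theta_1})^{\dim_{\mathrm{A}}^{\theta_1/\theta_2}F^+}$ balls of radius $R^{1/\theta_1}$; crucially this last bound is uniform in position, because $R^{1/\theta_1}=(R^{1/\theta_2})^{\theta_2/\theta_1}$ with $\theta_1/\theta_2\in(0,1)$, so it is exactly an instance of the definition of the Assouad spectrum at parameter $\theta_1/\theta_2$. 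Taking the infimum over $x\in F$ and combining with the estimate $\inf_{x\in F}N(B(x,R)\cap F, R^{1/\theta_2})\lesssim R^{(1-1/\theta_2)\dim_{\mathrm{L}}^{\theta_2}F^{+}}$, valid along a sequence $R\to0$, and with $\inf_{x\in F}N(B(x,R)\cap F, R^{1/\theta_1})\gtrsim R^{(1-1/\theta_1)\dim_{\mathrm{L}}^{\theta_1}F^{-}}$, valid for all small $R$, then taking logarithms and dividing by $\log R<0$, yields the claimed inequality.

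Granting this, continuity of $\theta\mapsto\dim_{\mathrm{L}}^{\theta}F$ on $(0,1)$ follows exactly as in the proof of Corollary \ref{Con}: letting $\theta_1\nearrow\theta_2$ gives continuity at $\theta_2$ from the left and letting $\theta_2\searrow\theta_1$ gives continuity at $\theta_1$ from the right, using that the coefficient $(\frac{1}{\theta_1}-\frac{1}{\theta_2})/(\frac{1}{\theta_1}-1)$ tends to $0$ while $\dim_{\mathrm{A}}^{\theta_1/\theta_2}F\leq\dim_{\mathrm{A}}F$ remains bounded, and that $(\frac{1}{\theta_2}-1)/(\frac{1}{\theta_1}-1)$ tends to $1$. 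For the Lipschitz statement the same algebra as in Corollary \ref{spectralip} applies: rearranging the two inequalities gives $|\dim_{\mathrm{L}}^{\theta_1}F-\dim_{\mathrm{L}}^{\theta_2}F|\leq \frac{\dim_{\mathrm{A}}F}{\theta_2(1-\theta_1)}|\theta_1-\theta_2|$ (using $0\leq\dim_{\mathrm{L}}^{\theta_2}F\leq\dim_{\mathrm{A}}F$ and $\dim_{\mathrm{A}}^{\theta_1/\theta_2}F\leq\dim_{\mathrm{A}}F$), so $\theta\mapsto\dim_{\mathrm{L}}^{\theta}F$ is Lipschitz on every $[\varepsilon,1-\varepsilon]$.

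To pass to the modified lower spectrum I would apply the displayed inequality (and the Lipschitz bound) to each nonempty $E\subseteq F$ and take suprema over such $E$. Since every coefficient is non-negative, since $\sup_{E}\dim_{\mathrm{L}}^{\theta}E=\dim_{\mathrm{ML}}^{\theta}F$ by definition, and since $\sup_{E\subseteq F}\dim_{\mathrm{A}}^{\theta_1/\theta_2}E=\dim_{\mathrm{A}}^{\theta_1/\theta_2}F$ and $\dim_{\mathrm{A}}E\leq\dim_{\mathrm{A}}F$ by monotonicity of the Assouad spectrum and dimension under set inclusion, one obtains the identical two-sided bound (and the identical Lipschitz bound) with $\dim_{\mathrm{L}}$ replaced by $\dim_{\mathrm{ML}}$ everywhere; continuity and the local Lipschitz property for $\theta\mapsto\dim_{\mathrm{ML}}^{\theta}F$ then follow as above.

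I expect the main difficulty to be bookkeeping rather than conceptual. Because $\dim_{\mathrm{L}}^{\theta}$ is a $\liminf$ of an $\inf$ rather than a $\limsup$ of a $\sup$, the roles of ``holds for all sufficiently small $R$'' and ``holds along some sequence $R\to0$'' are interchanged relative to the arguments for the Assouad spectrum, and one must be careful about which of the three covering-number estimates is uniform in $R$ so that, after taking logarithms and dividing by the negative quantity $\log R$, the inequalities combine in the correct direction. A secondary point --- easily dealt with but worth isolating --- is that the estimate used to transfer to $\dim_{\mathrm{ML}}$ must be uniform over subsets $E\subseteq F$; this works precisely because the \emph{Assouad} spectrum (unlike the lower spectrum) is monotone under inclusion, so the subset-supremum of the error term $\dim_{\mathrm{A}}^{\theta_1/\theta_2}E$ is controlled by $\dim_{\mathrm{A}}^{\theta_1/\theta_2}F\leq\dim_{\mathrm{A}}F$.
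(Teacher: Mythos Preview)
Your proposal is correct and follows essentially the same route as the paper's own proof: both derive the two-sided inequality from the pointwise monotonicity $N(B(x,R)\cap F,R^{1/\theta_1})\geq N(B(x,R)\cap F,R^{1/\theta_2})$ together with a covering refinement of $R^{1/\theta_2}$-balls by $R^{1/\theta_1}$-balls, then pass to $\dim_{\mathrm{ML}}^{\theta}$ by taking suprema over subsets and using monotonicity of the Assouad dimension. The only (harmless) difference is that you sharpen the error term to $\dim_{\mathrm{A}}^{\theta_1/\theta_2}F$ where the paper uses the cruder $\dim_{\mathrm{A}}F$; since you only need boundedness of this quantity as $\theta_1\to\theta_2$, either choice works and the resulting Lipschitz constant is the same.
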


\begin{proof}
For any $0<R<1$ and $0<\theta_1<\theta_2<1$ we have $R^{1/\theta_1}<R^{1/\theta_2}$, therefore it is clear that for any $0<R<1$:
$$\inf_{x\in F}N(B(x,R)\cap F,R^{1/\theta_2})
\leq \inf_{x\in F}N(B(x,R)\cap F,R^{1/\theta_1}).$$
Now notice that, 
$$  \inf_{x\in F}N(B(x,R)\cap F,R^{1/\theta_1}) \lesssim \inf_{x\in F}N(B(x,R)\cap F,R^{1/\theta_2})\left(\frac{R^{1/\theta_2}}{R^{1/\theta_1}}\right)^{\dim_\mathrm{A} F^+}. $$
This is because we may cover at least one $R$-ball with
$$ \lesssim \inf_{x\in F}N(B(x,R)\cap F,R^{1/\theta_2})\left(\frac{R^{1/\theta_2}}{R^{1/\theta_1}}\right)^{\dim_\mathrm{A} F^+}$$
balls of radius $R^{1/\theta_1}$, and therefore this number is no smaller than $\inf_{x\in F}N(B(x,R)\cap F,R^{1/\theta_1})$.  The above two inequalities imply that for infinitely many $R\to 0$ we have
$$R^{(1-1/\theta_2)\dim_\mathrm{L}^{\theta_2} F^-}
\lesssim R^{(1-1/\theta_1)\dim_\mathrm{L}^{\theta_1} F^+}$$
and also for  infinitely many $R\to 0$ we have
$$R^{(1-1/\theta_1)\dim_\mathrm{L}^{\theta_1}F^-}
\lesssim R^{(1-1/\theta_2)\dim_\mathrm{L}^{\theta_2} F^+}R^{\dim_\mathrm{A} F^+(1/\theta_2-1/\theta_1)}.$$
It follows that
\begin{eqnarray*}
\left(1-\frac{1}{\theta_2}\right)\dim_\textrm{L}^{\theta_2}F+\dim_\textrm{A} F\left(\frac{1}{\theta_2}-\frac{1}{\theta_1}\right) &\leq& \left(1-\frac{1}{\theta_1}\right)\dim_\textrm{L}^{\theta_1}F \\ \\
&\leq&  \left(1-\frac{1}{\theta_2}\right)\dim_\textrm{L}^{\theta_2}F
\end{eqnarray*}
Dividing through by $(1-1/\theta_1)$ and then letting $\theta_1 \nearrow \theta_2$ establishes lower semicontinuity of $\theta \mapsto \dim_\textrm{L} F$ at $\theta_2$ and letting $\theta_2 \searrow \theta_1$ establishes upper semicontinuity of $\theta \mapsto \dim_\textrm{L} F$ at $\theta_1$.  Since $\theta_1$ and $\theta_2$ are arbitrary the desired continuity follows.

The above discussion holds for any metric space $F$, and in particular for any subspace $E\subseteq F$ we have
\begin{eqnarray*}
\left(1-\frac{1}{\theta_2}\right)\dim_\textrm{L}^{\theta_2}E+\dim_\textrm{A} E\left(\frac{1}{\theta_2}-\frac{1}{\theta_1}\right)&\leq&  \left(1-\frac{1}{\theta_1}\right)\dim_\textrm{L}^{\theta_1}E \\ \\
&\leq& \left(1-\frac{1}{\theta_2}\right)\dim_\textrm{L}^{\theta_2}E
\end{eqnarray*}
Taking the supremum over all $E\subseteq F$ throughout, we get
\begin{eqnarray*}
\left(1-\frac{1}{\theta_2}\right)\dim_\textrm{ML}^{\theta_2}F-\dim_\textrm{A} F\left(\frac{1}{\theta_1}-\frac{1}{\theta_2}\right)&\leq& \left(1-\frac{1}{\theta_1}\right)\dim_\textrm{ML}^{\theta_1}F \\ \\
&\leq& \left(1-\frac{1}{\theta_2}\right)\dim_\textrm{ML}^{\theta_2}F
\end{eqnarray*}
and therefore the modified lower spectrum is also continuous.  Finally, the fact that the lower spectrum and modified lower spectrum are Lipschitz on any closed subinterval of $(0,1)$ also follows immediately by applying the above bounds.
\end{proof}

\newpage

\section{Geometric properties}

In this section we investigate how the various dimension spectra are affected by standard geometric operations such as products, unions, and images under H\"older continuous maps.

It is clear that the spectra satisfy the following properties and we leave the proofs to the reader.

\begin{prop}[Closure, monotonicity, and finite stability] \label{basic1} \label{basic2}
\hspace{1mm} \\

\begin{enumerate}
\item For any set $F$ in a metric space and any $\theta \in (0,1)$, we have:
$$\dim_\mathrm{A}^{\theta} F \ = \ \dim_\mathrm{A}^{\theta} \overline{F}$$
$$\dim_\mathrm{L}^{\theta} F \ = \ \dim_\mathrm{L}^{\theta} \overline{F}.$$
\item For any $F'\subseteq F$ and any $\theta \in (0,1)$, we have:
$$\dim_\mathrm{A}^{\theta} F' \ \leq \ \dim_\mathrm{A}^{\theta} F$$
$$\dim_\mathrm{ML}^{\theta} F' \ \leq \ \dim_\mathrm{ML}^{\theta} F.$$
\item For any finite collection of sets  $\{F_i\}_{i =1}^n$ we have, for all $\theta\in(0,1)$, 
\[
\dim_\mathrm{A}^{\theta} \left( \bigcup_{i=1}^{n}F_i \right) \ = \  \max_{i=1,2, \dots, n}\dim_\mathrm{A}^{\theta} F_i.
\]
\end{enumerate}
\end{prop}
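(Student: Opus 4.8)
The plan is to work throughout with the explicit formulae
\[
\dim_{\mathrm{A}}^\theta F  \ = \  \limsup_{R \to 0} \  \sup_{x \in F} \, \frac{\log  N\big(B(x,R) \cap F,R^{1/\theta}\big) }{(1-1/\theta)\log R},
\]
\[
\dim_{\mathrm{L}}^\theta F  \ = \  \liminf_{R \to 0} \  \inf_{x \in F} \, \frac{\log  N\big(B(x,R) \cap F, R^{1/\theta}\big) }{(1-1/\theta)\log R},
\]
and to lean on the observation (in the spirit of the remarks in Section 2) that these limits do not change if $R$ and $R^{1/\theta}$ are each multiplied by a fixed positive constant, since such a change alters the numerator by only an additive $O(1)$ which vanishes after dividing by $-\log R$. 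I would take $N(\cdot,r)$ to be a covering number, so that it is monotone under set inclusion and subadditive under finite unions, and whenever I need to replace a cover by $\lambda r$-balls with one by $r$-balls I would use the (local) doubling of the ambient space --- automatic for $F\subseteq\mathbb{R}^d$, which is the setting of all our examples.

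For monotonicity: if $F'\subseteq F$ then $B(x,R)\cap F'\subseteq B(x,R)\cap F$ for every $x\in F'\subseteq F$, so $N(B(x,R)\cap F',R^{1/\theta})\le N(B(x,R)\cap F,R^{1/\theta})$, and taking the supremum over $x\in F'$ and then $\limsup_{R\to0}$ gives $\dim_\mathrm{A}^\theta F'\le\dim_\mathrm{A}^\theta F$. For the modified lower spectrum one just notes that $\{E:\emptyset\neq E\subseteq F'\}\subseteq\{E:\emptyset\neq E\subseteq F\}$, so the supremum of $\dim_\mathrm{L}^\theta E$ over the former cannot exceed the supremum over the latter. (The unmodified lower spectrum is genuinely non-monotone, exactly as for the lower dimension --- this is why only $\dim_\mathrm{A}^\theta$ and $\dim_\mathrm{ML}^\theta$ appear in this item.)

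For closure invariance I would imitate the corresponding arguments for the Assouad and lower dimensions. The ``$\le$'' half of $\dim_\mathrm{A}^\theta F=\dim_\mathrm{A}^\theta\overline F$ is immediate from monotonicity, so it remains to check $\dim_\mathrm{A}^\theta\overline F\le\dim_\mathrm{A}^\theta F$; for the lower spectrum, which is not monotone, I would prove both inequalities directly. Everything reduces to two elementary facts: (i) for $x\in\overline F$ one has $B(x,R)\cap\overline F\subseteq\overline{B(x,2R)\cap F}$, and, choosing $y\in F$ with $d(x,y)<R^{1/\theta}$, also $B\bigl(y,\,R-R^{1/\theta}\bigr)\cap F\subseteq B(x,R)\cap\overline F$; and (ii) a finite cover of a set $S$ by $r$-balls becomes, on taking closures, a cover of $\overline S$ by $2r$-balls, so $N(\overline S,2r)\le N(S,r)$. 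Facts (i)--(ii) sandwich $N(B(x,R)\cap\overline F,\cdot)$ between a covering number of $B(x,2R)\cap F$ from above and one of $B(y,\,R-R^{1/\theta})\cap F$ from below; since $R-R^{1/\theta}=(1-o(1))R$, the insensitivity observation identifies the $\limsup$ and $\liminf$ of these bounds with the defining quantities for $F$, and the four equalities follow.

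For finite stability, induction reduces everything to $n=2$. The inequality $\dim_\mathrm{A}^\theta(F_1\cup F_2)\ge\max_i\dim_\mathrm{A}^\theta F_i$ is monotonicity. For the reverse I would fix $x\in F_1\cup F_2$, write $B(x,R)\cap(F_1\cup F_2)=\bigl(B(x,R)\cap F_1\bigr)\cup\bigl(B(x,R)\cap F_2\bigr)$, and apply subadditivity to get $N(B(x,R)\cap(F_1\cup F_2),R^{1/\theta})\le N(B(x,R)\cap F_1,R^{1/\theta})+N(B(x,R)\cap F_2,R^{1/\theta})$; if $B(x,R)\cap F_i\neq\emptyset$ I pick $y_i\in F_i$ in it so that $B(x,R)\cap F_i\subseteq B(y_i,2R)\cap F_i$, and $\sup_{y\in F_i}N(B(y,2R)\cap F_i,R^{1/\theta})$ has the same $\limsup$ as the quantity defining $\dim_\mathrm{A}^\theta F_i$. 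Taking $\sup_x$, then $\log$, then $\limsup_{R\to0}$, and using $\log(a+b)\le\log 2+\log\max(a,b)$ (the $\log 2$ washing out in the limit) finishes it. The hard part throughout is none of these inclusions but the bookkeeping behind the insensitivity observation: checking that the spectra really are unaffected by replacing $B(x,R)$ with $B(x,\lambda R)$ or $R^{1/\theta}$ with $\mu R^{1/\theta}$, i.e.\ that passing between $\lambda r$- and $r$-covers costs only a bounded factor --- trivial in $\mathbb{R}^d$, but in a general metric space precisely the place where the local doubling of the space is needed.
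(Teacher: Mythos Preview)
The paper gives no proof of this proposition --- it states that ``it is clear that the spectra satisfy the following properties and we leave the proofs to the reader'' --- so there is nothing to compare against. Your argument is correct and is exactly the kind of routine verification the authors had in mind: monotonicity is immediate from set inclusion, the modified lower spectrum inherits monotonicity trivially from its definition as a supremum over subsets, finite stability follows from subadditivity of $N$ together with the center-shifting trick $B(x,R)\cap F_i\subseteq B(y_i,2R)\cap F_i$, and closure invariance comes from sandwiching as you describe.

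You are also right to flag the one genuine subtlety: the ``insensitivity observation'' (that replacing $R$ by $\lambda R$ or $R^{1/\theta}$ by $\mu R^{1/\theta}$ does not change the spectra) requires that a ball of radius $\lambda r$ can be covered by boundedly many balls of radius $r$, i.e.\ doubling of the ambient space or, equivalently, $\dim_{\mathrm{A}} F<\infty$. This is automatic in $\mathbb{R}^d$ and in every example the paper treats, and the paper itself uses the same device freely elsewhere (e.g.\ the remark in Section~2 that one may pass to exponential scales $2^{-k}$, or the explicit appeal to doubling in the proof of Proposition~4.5). So your caveat is appropriate and your proof is complete in the intended generality.
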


Interestingly, the modified lower dimension and modified lower spectrum are not stable under taking closure as the following example illustrates.

\begin{example}
Let
\[
X \ = \ \left\{ (p/q, 1/q) \ : \ p, q \in \mathbb{N}^+,  \, p \leq q, \, \text{\emph{gcd}}(p,q) = 1 \right\}  \ \subseteq  \ [0,1]^2
\]
and observe that every point $x \in X$ is isolated and therefore any subset of $X$ has an isolated point.  This implies that for any $\theta \in (0,1)$
\[
\dim_\mathrm{ML}^{\theta} X \ = \ \dim_\mathrm{ML} X \ = \ 0.
\]
However, $[0,1] \times \{0 \} \subseteq \overline{X}$ and so 
\[
\dim_\mathrm{ML}^{\theta} \overline{X} \ = \ \dim_\mathrm{ML} \overline{X} \ = \ 1.
\]
\end{example}

Clearly the Assouad spectrum is not stable under countable unions.  For example  $\mathbb{Q}\cap [0,1]$ is a countable union of point sets, all of which have Assouad spectrum constantly equal to $0$, but $\mathbb{Q}\cap [0,1]$ has Assouad spectrum constantly equal to $1$ by the closure property.  The  lower spectrum is not even stable under finite unions: consider the union of $[0,1] \cup \{2\}$ and $\{0\} \cup [1,2]$.  One can say more if the sets in the union are properly separated.

\begin{prop}[Unions of properly separated sets]  \label{basic2sep}
Let $E, F$ be `properly separated' subsets of a metric space $(X,d)$, i.e. sets such that
\[
\inf_{x \in E, y \in F}  \, d(x,y) >0.
\]
Then,
\[
\dim_\mathrm{ML}E \cup F \  = \   \dim_\mathrm{ML}E  \, \vee \,     \dim_\mathrm{ML}F
\]
and, for all $\theta\in(0,1)$, 
\[
\dim_\mathrm{ML}^{\theta}E \cup F \  = \   \dim_\mathrm{ML}^{\theta}E  \, \vee \,     \dim_\mathrm{ML}^{\theta}F
\]
and
\[
\dim_\mathrm{L}^{\theta}E \cup F \  = \   \dim_\mathrm{L}^{\theta}E  \, \wedge \,     \dim_\mathrm{L}^{\theta}F.
\]
Moreover, these results extend to arbitrary finite unions of pairwise `properly separated' sets where the maximum/minimum is taken over all sets in the union.
\end{prop}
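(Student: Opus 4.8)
The plan is to exploit the key geometric feature of properly separated sets: if $\delta = \inf_{x \in E, y \in F} d(x,y) > 0$, then for all sufficiently small $R < \delta/3$, any ball $B(x,R)$ with centre $x \in E \cup F$ meets at most one of $E$, $F$. Consequently, for such $R$ and any $\theta \in (0,1)$,
\[
N\big(B(x,R) \cap (E \cup F), R^{1/\theta}\big) \ = \ N\big(B(x,R) \cap E, R^{1/\theta}\big) \quad \text{or} \quad N\big(B(x,R) \cap F, R^{1/\theta}\big),
\]
depending on whether $x \in E$ or $x \in F$. This identity is the engine for all three displayed equalities; I would state and prove it first as the single observation from which everything follows.

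For the lower spectrum statement, fix $\theta$ and work from the explicit formula $\dim_\mathrm{L}^\theta(E \cup F) = \liminf_{R \to 0} \inf_{x \in E \cup F} \frac{\log N(B(x,R)\cap(E\cup F), R^{1/\theta})}{(1-1/\theta)\log R}$. Since $(1-1/\theta) < 0$, the infimum over $x$ of the log-ratio corresponds to a \emph{maximum} of the covering number, but the $\liminf$ outside still produces the $\wedge$: splitting the inner infimum as $\inf_{x \in E} \wedge \inf_{x \in F}$ via the observation above, and noting that $\liminf$ of a minimum of two sequences is the minimum of the two $\liminf$s provided\ldots actually here one must be slightly careful, because $\liminf(a_R \wedge b_R)$ need not equal $(\liminf a_R) \wedge (\liminf b_R)$ in general. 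The correct route: the $\leq$ direction is immediate from monotonicity of $\dim_\mathrm{L}^\theta$ under \emph{supersets}? No — $\dim_\mathrm{L}^\theta$ is not monotone. Instead I would argue both inequalities directly from the definition (the $\sup$ over $\alpha$ form). For $\geq$: if $\alpha$ witnesses the lower-spectrum bound for both $E$ and $F$ with constants $C_E, C_F, \rho_E, \rho_F$, then $\alpha$ witnesses it for $E \cup F$ with constant $C_E \wedge C_F$ and scale $\rho_E \wedge \rho_F \wedge \delta/3$, using the observation. For $\leq$: if $\alpha$ witnesses it for $E \cup F$, then restricting $x$ to lie in $E$ (resp. $F$) and using the observation shows $\alpha$ witnesses it for $E$ (resp. $F$), so $\dim_\mathrm{L}^\theta(E \cup F) \leq \dim_\mathrm{L}^\theta E \wedge \dim_\mathrm{L}^\theta F$. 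The modified lower and modified lower spectrum statements then follow: for $\dim_\mathrm{ML}^\theta$, use that every nonempty subset $A \subseteq E \cup F$ splits as $(A \cap E) \sqcup (A \cap F)$ with one piece nonempty, and $\dim_\mathrm{L}^\theta$ is monotone under \emph{nonempty subsets}? Again it is not, but here we only need: any nonempty $A \subseteq E$ gives $\dim_\mathrm{L}^\theta A \leq \dim_\mathrm{ML}^\theta E$ by definition, and conversely $\dim_\mathrm{ML}^\theta E \leq \dim_\mathrm{ML}^\theta(E \cup F)$ since subsets of $E$ are subsets of $E \cup F$. This gives $\geq$; for $\leq$, given nonempty $A \subseteq E \cup F$ with, say, $A \cap E \neq \emptyset$, one shows $\dim_\mathrm{L}^\theta A \leq \dim_\mathrm{L}^\theta(A \cap E) \vee \dim_\mathrm{L}^\theta(A \cap F)$ (treating the empty piece as contributing nothing) by the same local argument, and both terms are $\leq \dim_\mathrm{ML}^\theta E \vee \dim_\mathrm{ML}^\theta F$. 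The $\dim_\mathrm{ML}$ (no $\theta$) case is identical with $R^{1/\theta}$ replaced by a free parameter $r$.

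The main obstacle is getting the $\dim_\mathrm{ML}^\theta$ inequality $\leq$ exactly right: a subset $A$ of $E \cup F$ need not be properly separated into $A \cap E$ and $A \cap F$ only if we forget that separation is inherited from the ambient sets — in fact $\inf_{x \in A \cap E, y \in A \cap F} d(x,y) \geq \delta > 0$ automatically, so $A \cap E$ and $A \cap F$ \emph{are} properly separated, and we may apply the already-established lower-spectrum equality to $A = (A\cap E) \cup (A \cap F)$ whenever both pieces are nonempty (and triviality handles the case one is empty). This reduces the modified statements cleanly to the unmodified lower-spectrum equality. Finally, the extension to finite unions of pairwise properly separated sets is a routine induction, grouping $F_1 \cup \cdots \cup F_{n-1}$ (which is properly separated from $F_n$ since a finite minimum of positive numbers is positive) and applying the two-set case; I would state this as a one-line remark rather than write it out.
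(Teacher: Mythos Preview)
Your proposal is correct and follows essentially the same route as the paper: establish the lower-spectrum equality via the local observation that small balls meet only one of $E,F$ (the paper defers this to \cite[Theorem 2.2]{Fraser}), and then deduce the modified-lower results by decomposing an arbitrary nonempty $Z \subseteq E \cup F$ as the properly separated union $(Z \cap E) \cup (Z \cap F)$ and applying the lower-spectrum result. The only cosmetic difference is that the paper handles the possibly-empty piece by adopting the convention $\dim_\mathrm{L} \emptyset = +\infty$, which lets the $\wedge$ formula go through uniformly without your case split.
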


\begin{proof}
The argument for the lower spectrum is similar to \cite[Theorem 2.2]{Fraser} and is omitted.  For the modified lower dimension and modified lower spectrum, the proof is straightforward and we only briefly give the modified lower dimension argument.  The lower bound ($\geq$) follows from monotonicity.  For the upper bound, we have
\begin{eqnarray*}
\dim_\mathrm{ML}E \cup F &=& \sup_{\emptyset \neq Z \subseteq E \cup F} \dim_\mathrm{L} ( Z \cap E) \cup (Z \cap F) \\ \\
&=&   \sup_{\emptyset \neq Z \subseteq E \cup F} \bigg( \dim_\mathrm{L} ( Z \cap E) \wedge   \dim_\mathrm{L} (Z \cap F) \bigg) \\ \\
& \leq& \left(\sup_{\emptyset \neq Z \subseteq E } \dim_\mathrm{L}Z\right) \  \vee  \   \left(\sup_{\emptyset \neq Z \subseteq F }  \dim_\mathrm{L} Z \right)  \\ \\
&=& \dim_\mathrm{ML}E  \, \vee \,     \dim_\mathrm{ML}F
\end{eqnarray*}
as required.  Note that we used the fact that the lower dimension of the union of two properly separated sets is given by the minimum of the individual dimensions, which is provided in \cite[Theorem 2.2]{Fraser}. We also adopt the convention that $\dim_\mathrm{L} \emptyset = +\infty$.
\end{proof}

There are many results in dimension theory related to how the dimension of a product space depends on the dimensions of the marginals.  A common phenomenon is that dimensions are best considered in pairs and the following standard formula has been verified for many `dimension pairs' $\dim$ and $\mathrm{Dim}$:
\begin{eqnarray*}
\dim X \, + \, \dim Y \ \leq \ \dim (X \times Y)  \  \leq \  \dim X \, + \, \mathrm{Dim} \,  Y  &\leq&  \mathrm{Dim} \,  (X \times Y) \\ 
 &\leq& \mathrm{Dim} \,  X \, + \, \mathrm{Dim} \, Y.
\end{eqnarray*}
Such examples include Hausdorff and packing dimension, see Howroyd \cite{products}; lower and upper box dimension; and lower and Assouad dimension.  For recent works on such product formulae see \cite{Fraser,Olson1,Olson2}.  We show below that the Assouad and lower spectra give rise to a continuum of `dimension pairs'. 

There are many natural `product metrics' to impose on the product $X \times Y$ of metric spaces $(X,d_X)$ and $(Y,d_Y)$, with a natural choice being the sup metric $d_{X \times Y}$ on $X \times Y$ defined by
\[
d_{X \times Y}\big((x_1,y_1), (x_2,y_2)\big) = d_X(x_1,x_2) \vee  d_Y(y_1,y_2).
\]
In particular, this metric is compatible with the product topology and bi-Lipschitz equivalent with many other commonly used product metrics, such as those induced by $p$ norms.

\begin{prop}[Products] \label{basic3}
Let $E,F$ be metric spaces and equip the product $E \times F$ with any suitable product metric.  For any $\theta \in (0,1)$ we have
\[
 \dim_\mathrm{ML}^{\theta}E + \dim_\mathrm{A}^{\theta}F \leq \dim_\mathrm{A}^{\theta}(E \times F)\leq \dim_\mathrm{A}^{\theta}E+\dim_\mathrm{A}^{\theta}F
\]
\[
 \dim_\mathrm{L}^{\theta}E + \dim_\mathrm{L}^{\theta}F \leq \dim_\mathrm{L}^{\theta}(E \times F)\leq \dim_\mathrm{L}^{\theta}E+\dim_\mathrm{A}^{\theta}F
\]
and
\[
 \dim_\mathrm{ML}^{\theta}E + \dim_\mathrm{ML}^{\theta}F \leq \dim_\mathrm{ML}^{\theta}(E \times F)\leq \dim_\mathrm{ML}^{\theta}E+\dim_\mathrm{A}^{\theta}F.
\]
\end{prop}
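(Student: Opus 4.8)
The plan is to combine the explicit covering-number formulae for the spectra with the near-multiplicativity of covering numbers under products. Since all suitable product metrics are bi-Lipschitz equivalent and the spectra are insensitive to bounded changes of scale, we may assume $E\times F$ carries the sup metric, so that a product ball splits as $B\big((x,y),R\big)=B_E(x,R)\times B_F(y,R)$. Using the packing formulation of $N(\cdot,r)$ (permissible here), a product of packings is a packing, so $N(A\times B,r)\gtrsim N(A,r)\,N(B,r)$, while covering each factor separately gives $N(A\times B,r)\lesssim N(A,r)\,N(B,r)$; the slack in scale is harmless and is absorbed exactly as the $B^\pm$, $b^\pm$ slack is handled in the proofs of Propositions \ref{BB} and \ref{BBL}. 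Hence for every $(x,y)$, for arbitrary subsets, and all small $R$,
\[
N\big(B((x,y),R)\cap(E\times F),R^{1/\theta}\big)\ \asymp\ N\big(B_E(x,R)\cap E,R^{1/\theta}\big)\,N\big(B_F(y,R)\cap F,R^{1/\theta}\big).
\]

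For the upper bounds, take the supremum (for the Assouad spectrum) or infimum (for the lower spectra) over $(x,y)$ in this comparison and insert the defining estimates: for all small $R$ and all $x$, $N(B_E(x,R)\cap E,R^{1/\theta})\lesssim R^{(1-1/\theta)(\dim_\mathrm{A}^\theta E)^+}$, and likewise for $F$; this yields $\dim_\mathrm{A}^\theta(E\times F)\le\dim_\mathrm{A}^\theta E+\dim_\mathrm{A}^\theta F$. For $\dim_\mathrm{L}^\theta(E\times F)\le\dim_\mathrm{L}^\theta E+\dim_\mathrm{A}^\theta F$, along a sequence $R\to 0$ realising the liminf defining $\dim_\mathrm{L}^\theta E$ pick $x_R$ nearly attaining $\inf_x N(B_E(x,R)\cap E,R^{1/\theta})$ and bound $\inf_{(x,y)}N(\cdots)$ by taking $x=x_R$ and any $y$, using the uniform Assouad bound for the $F$-factor. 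For the modified lower spectrum one must avoid invoking monotonicity of $\dim_\mathrm{L}^\theta$ (which fails): given $\emptyset\neq Z\subseteq E\times F$, pass to $Z\subseteq(\pi_E Z)\times F$, where $\pi_E Z\subseteq E$ is the projection; since each $x\in\pi_E Z$ has a partner $y$ with $(x,y)\in Z$, the previous argument runs verbatim inside $Z$ and gives $\dim_\mathrm{L}^\theta Z\le\dim_\mathrm{L}^\theta(\pi_E Z)+\dim_\mathrm{A}^\theta F\le\dim_\mathrm{ML}^\theta E+\dim_\mathrm{A}^\theta F$, and taking the supremum over $Z$ completes it.

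For the lower bounds, $\dim_\mathrm{L}^\theta E+\dim_\mathrm{L}^\theta F\le\dim_\mathrm{L}^\theta(E\times F)$ is immediate from the comparison and the uniform-in-space, all-small-$R$ lower estimates defining the two lower spectra; applying this to subsets $E'\subseteq E$, $F'\subseteq F$ with $\dim_\mathrm{L}^\theta E'$, $\dim_\mathrm{L}^\theta F'$ arbitrarily close to the modified values (and $E'\times F'\subseteq E\times F$) gives $\dim_\mathrm{ML}^\theta E+\dim_\mathrm{ML}^\theta F\le\dim_\mathrm{ML}^\theta(E\times F)$. The remaining inequality $\dim_\mathrm{ML}^\theta E+\dim_\mathrm{A}^\theta F\le\dim_\mathrm{A}^\theta(E\times F)$ is the least routine: choose $E'\subseteq E$ with $\dim_\mathrm{L}^\theta E'$ close to $\dim_\mathrm{ML}^\theta E$, so $N(B_E(x,R)\cap E',R^{1/\theta})\gtrsim R^{(1-1/\theta)(\dim_\mathrm{L}^\theta E')^-}$ for all $x\in E'$ and all small $R$; for infinitely many $R$ pick $y_R\in F$ with $N(B_F(y_R,R)\cap F,R^{1/\theta})\gtrsim R^{(1-1/\theta)(\dim_\mathrm{A}^\theta F)^-}$; then the points $(x,y_R)$ with $x\in E'$ force $\sup_z N(B(z,R)\cap(E'\times F),R^{1/\theta})\gtrsim R^{(1-1/\theta)[(\dim_\mathrm{L}^\theta E')^-+(\dim_\mathrm{A}^\theta F)^-]}$ for infinitely many $R$, so $\dim_\mathrm{A}^\theta(E'\times F)\ge\dim_\mathrm{L}^\theta E'+\dim_\mathrm{A}^\theta F$; finally invoke $\dim_\mathrm{A}^\theta(E'\times F)\le\dim_\mathrm{A}^\theta(E\times F)$ (Proposition \ref{basic1}) and let $\dim_\mathrm{L}^\theta E'\to\dim_\mathrm{ML}^\theta E$.

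The main obstacle is bookkeeping rather than conceptual: in every lower bound one must correctly marry an estimate valid only along a sparse sequence of scales (from the limsup in $\dim_\mathrm{A}^\theta F$, or the liminf in $\dim_\mathrm{L}^\theta E$) with an estimate valid at all small scales and uniformly in space (from the lower or modified lower spectrum), and one must route around the non-monotonicity of $\dim_\mathrm{L}^\theta$ via the projection $\pi_E Z$. It is worth stating the product covering-number estimate cleanly at the outset, since the (harmless) constant-in-scale slack there threads through all six inequalities.
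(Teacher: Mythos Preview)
Your proposal is correct and follows essentially the same approach as the paper's proof: both use the sup metric, the near-multiplicativity of covering numbers on products, the choice of a subset $E'\subseteq E$ with $\dim_\mathrm{L}^\theta E'$ near $\dim_\mathrm{ML}^\theta E$ for the Assouad lower bound, and the projection $\pi_E Z$ to handle the modified lower spectrum upper bound. Your treatment of the last point is arguably a touch cleaner, since you argue directly that $\dim_\mathrm{L}^\theta Z \le \dim_\mathrm{L}^\theta(\pi_E Z)+\dim_\mathrm{A}^\theta F$ via the partner point $(x_R,y_R)\in Z$, rather than routing through the intermediate claim $\dim_\mathrm{L}^\theta Z \le \dim_\mathrm{L}^\theta(\pi_E Z\times F)$.
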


\begin{proof}
For the purposes of this proof we use the sup metric on the product space.  In particular this means that the product of two covering sets of diameter $r$ is a set of diameter $r$ and so covers of parts of $E$ and $F$ can be easily combined to provide covers of the corresponding parts of $E \times F$.   Let  $P_E$ and $P_F$ denote the  projection on to $E$ and $F$ respectively. Then clearly for any $R>0$ and $x \in E \times F$ we have
\begin{eqnarray*}
N(B(x,R)\cap E\times F,R^{1/\theta}) &\leq & \sup_{y\in E}N(B(y,R)\cap E,R^{1/\theta})\sup_{z\in F}N(B(z,R)\cap F,R^{1/\theta}) \\ 
&\lesssim & R^{(1-1/\theta)(\dim_\mathrm{A}^\theta E^++\dim_\mathrm{A}^\theta F^+)} 
\end{eqnarray*}
which proves that $\dim_\mathrm{A}^\theta (E\times F)\leq \dim_\mathrm{A}^\theta E+\dim_\mathrm{A}^\theta F$.  On the other hand for any $E'\subset E$:
\begin{eqnarray*}
N(B(x,R)\cap E\times F,R^{1/\theta}) &\geq & N(B(x,R)\cap E'\times F,R^{1/\theta}) \\ 
&\geq & \inf_{y\in E'}N(B(y,R)\cap E',R^{1/\theta})N(B(P_Fx,R)\cap F,R^{1/\theta}) .
\end{eqnarray*}
Since $x\in E'\times F$ can be chosen such that
\[
N(B(P_Fx,R)\cap F,R^{1/\theta}) \ \geq  \ \sup_{z\in F}N(B(z,R)\cap F,R^{1/\theta})^-
\]
we have
\begin{eqnarray*}
\sup_{x\in E\times F}N(B(x,R),R^{1/\theta}) &\geq & \sup_{x\in E'\times F}N(B(x,R)\cap E'\times F,R^{1/\theta}) \\ 
&\geq & \inf_{y\in E'}N(B(y,R)\cap E',R^{1/\theta})\sup_{z\in F}N(B(z,R)\cap F,R^{1/\theta})^- .
\end{eqnarray*}
This implies that:
\begin{eqnarray*}
\sup_{x\in E\times F}N(B(x,R),R^{1/\theta}) &\geq & \inf_{y\in E'}N(B(y,R)\cap E',R^{1/\theta})\sup_{z\in F}N(B(z,R)\cap F,R^{1/\theta})
\end{eqnarray*}
which, similar to above, yields $\dim_\mathrm{A}^\theta (E\times F)\geq \dim_\mathrm{ML}^\theta E+\dim_\mathrm{A}^\theta F$ as required.  The second chain of inequalities (which concern the lower spectrum) follow by a similar argument, which we omit.   The third chain of inequalities (which concern the modified lower spectrum) follow easily from the second.  In particular, for the lower bound choose nonempty subsets $E' \subset E$ and $F' \subset F$ such that $\dim_\mathrm{L}^\theta E' \geq \dim_\mathrm{ML}^\theta E^-$ and $\dim_\mathrm{L}^\theta F' \geq \dim_\mathrm{ML}^\theta F ^-$ and then apply monotonicity and the result for the lower spectrum to obtain
\begin{eqnarray*}
\dim_\mathrm{ML}^{\theta}(E \times F)\geq  \dim_\mathrm{ML}^{\theta}(E' \times F')\geq  \dim_\mathrm{L}^{\theta}(E' \times F')&\geq& \dim_\mathrm{L}^{\theta}E'+\dim_\mathrm{L}^{\theta}F' \\ 
& \geq& \dim_\mathrm{ML}^{\theta}E^-+\dim_\mathrm{ML}^{\theta}F^-
\end{eqnarray*}
which proves the desired lower bound.  For the upper bound, the upper bound concerning the lower spectrum implies that
\[
\sup_{E' \subseteq E} \dim_\mathrm{L}^{\theta}(E' \times F) \leq \sup_{E' \subseteq E} \dim_\mathrm{L}^{\theta}E'+\dim_\mathrm{A}^{\theta}F = \dim_\mathrm{ML}^{\theta}E+\dim_\mathrm{A}^{\theta}F
\]
which is almost what we want, apart from that it is not \emph{a priori} obvious that the quantity on the left is equal to $\dim_\mathrm{ML}^{\theta}(E \times F)$.  However, this follows since for any $K \subseteq E \times F$ we have for any $x \in K$ and $R>0$ that
\[
B(x,R) \cap K \subseteq B(x,R) \cap P_EK \times F
\]
which yields that $\dim_\mathrm{L}^{\theta} K \leq \dim_\mathrm{L}^{\theta} (P_EK \times F )$ completing the proof.
\end{proof}

We also obtain a sharp result for `self-products'.
\begin{prop}[Self-products]
Let $F$ be a metric space, $n \in \mathbb{N}$, and equip the $n$-fold product $F^n = F \times \cdots \times  F$ with any suitable product metric.  For any $\theta \in (0,1)$ we have
\[
\dim_\mathrm{A}^{\theta} \left( F^n \right) =  n \dim_\mathrm{A}^{\theta}F,
\]
\[
\dim_\mathrm{L}^{\theta}\left( F^n \right)  =  n \dim_\mathrm{L}^{\theta}F
\]
and
\[
\dim_\mathrm{ML}^{\theta} \left( F^n \right)=  n \dim_\mathrm{ML}^{\theta}F.
\]
\end{prop}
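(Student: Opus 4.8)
The plan is to prove each of the three equalities by establishing ``$\le$'' and ``$\ge$'' separately, treating the three dimensions in parallel. As in the proof of Proposition~\ref{basic3} I work throughout with the sup metric on $F^n$ (the case of a general suitable product metric then follows from bi-Lipschitz equivalence). Two elementary facts will be used repeatedly. First, in the sup metric $B(\mathbf{x},R)\cap F^n=\prod_{i=1}^n\big(B(x_i,R)\cap F\big)$, and for a \emph{diagonal} centre $\mathbf{x}=(x,\dots,x)$ this is $\big(B(x,R)\cap F\big)^n$. Second, a product of $n$ sets of diameter $\le r$ has diameter $\le r$, and a product of $n$ $r$-packings is an $r$-packing, so $N\big(\prod_i A_i,r\big)\le\prod_i N(A_i,r)$ while $M\big(\prod_i A_i,r\big)\ge\prod_i M(A_i,r)$, where $M$ is the maximal packing number (as in the proof of Proposition~\ref{BBL}); since the spectra may be computed using either $N$ or $M$, these give matching one-sided control.

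Three of the six inequalities are routine: the upper bound for $\dim_\mathrm{A}^\theta$ and the lower bounds for $\dim_\mathrm{L}^\theta$ and $\dim_\mathrm{ML}^\theta$ all follow from Proposition~\ref{basic3} by induction on $n$. Writing $F^n=F^{n-1}\times F$ and combining the relevant line of Proposition~\ref{basic3} with the inductive hypothesis gives, for example, $\dim_\mathrm{A}^\theta(F^n)\le\dim_\mathrm{A}^\theta(F^{n-1})+\dim_\mathrm{A}^\theta F\le n\dim_\mathrm{A}^\theta F$, and likewise $\dim_\mathrm{L}^\theta(F^n)\ge n\dim_\mathrm{L}^\theta F$ and $\dim_\mathrm{ML}^\theta(F^n)\ge n\dim_\mathrm{ML}^\theta F$.

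The lower bound for $\dim_\mathrm{A}^\theta$ and the upper bound for $\dim_\mathrm{L}^\theta$ I would prove on the diagonal. For the former, use the explicit $\limsup$ formula with $M$ in place of $N$ to pick $R_k\to0$ and $x_k\in F$ along which $\log M\big(B(x_k,R_k)\cap F,R_k^{1/\theta}\big)\big/\big((1-1/\theta)\log R_k\big)\to\dim_\mathrm{A}^\theta F$; placing the ball at the diagonal point $\mathbf{x}_k=(x_k,\dots,x_k)$ and invoking $M\big((B(x_k,R_k)\cap F)^n,R_k^{1/\theta}\big)\ge M\big(B(x_k,R_k)\cap F,R_k^{1/\theta}\big)^n$, then taking logarithms, dividing by $(1-1/\theta)\log R_k$ and letting $k\to\infty$, yields $\dim_\mathrm{A}^\theta(F^n)\ge n\dim_\mathrm{A}^\theta F$. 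The upper bound for $\dim_\mathrm{L}^\theta$ is the exact mirror: choose $R_k\to0$ and $x_k\in F$ realising $\dim_\mathrm{L}^\theta F$ via the $\liminf$ formula (with $N$), centre the ball on the diagonal, and use $N\big((B(x_k,R_k)\cap F)^n,R_k^{1/\theta}\big)\le N\big(B(x_k,R_k)\cap F,R_k^{1/\theta}\big)^n$ to conclude $\dim_\mathrm{L}^\theta(F^n)\le n\dim_\mathrm{L}^\theta F$. This settles the first two equalities.

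The remaining inequality, $\dim_\mathrm{ML}^\theta(F^n)\le n\dim_\mathrm{ML}^\theta F$, is the main obstacle and the one place where Proposition~\ref{basic3} and the diagonal are not enough: the modified-lower product bound in Proposition~\ref{basic3} carries a spurious $\dim_\mathrm{A}^\theta F$ (which need not be comparable to $\dim_\mathrm{ML}^\theta F$), and because $\dim_\mathrm{L}^\theta$ is not monotone one cannot push the containment $K\subseteq\prod_i P_i K$ (valid for every $K\subseteq F^n$, with $P_i$ the coordinate projections) through $\dim_\mathrm{L}^\theta$. My plan is to exploit the fibre-projection identity isolated inside the proof of Proposition~\ref{basic3}, $\dim_\mathrm{ML}^\theta(E\times F)=\sup_{E'\subseteq E}\dim_\mathrm{L}^\theta(E'\times F)$, together with the lower-spectrum self-product formula just proved. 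The point is that $\dim_\mathrm{ML}^\theta(F^n)=\sup_{K\subseteq F^n}\dim_\mathrm{L}^\theta K$, and for $K$ realising this supremum up to $\epsilon$ one wants to manufacture a \emph{single} subset $G\subseteq F$ with $\dim_\mathrm{L}^\theta G\ge\tfrac1n\dim_\mathrm{L}^\theta K-\epsilon$; then $\dim_\mathrm{ML}^\theta(F^n)\le\sup_{G\subseteq F}\dim_\mathrm{L}^\theta(G^n)=\sup_{G\subseteq F}n\dim_\mathrm{L}^\theta G=n\dim_\mathrm{ML}^\theta F$. The hard part is exactly this extraction: from $N\big(B(\mathbf{x},R)\cap K,R^{1/\theta}\big)\le\prod_i N\big(B(x_i,R)\cap P_i K,R^{1/\theta}\big)$ one learns that at each scale in the sparse sequence witnessing $\dim_\mathrm{L}^\theta K$ some coordinate projection is thick, but the thick coordinate may vary with the scale and the point, so aligning these choices into one uniformly thick subset of $F$ will require a scale-by-scale selection argument, and I expect the bulk of the work to lie there.
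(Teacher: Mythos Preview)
Your treatment of $\dim_\mathrm{A}^\theta$ and $\dim_\mathrm{L}^\theta$ is correct and coincides with the paper's approach: the paper's proof is just the one-sentence hint ``choose $x\in F$ witnessing the extremal behaviour and pass to the diagonal point $(x,\dots,x)\in F^n$'', which is exactly your diagonal-plus-Proposition~\ref{basic3} argument.

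On the $\dim_\mathrm{ML}^\theta$ upper bound you are right to be uneasy, and in fact you are being more careful than the paper. The paper simply writes ``similar to the general case, details omitted'' and points to the diagonal trick; but the diagonal trick only supplies the \emph{lower} bound for $\dim_\mathrm{ML}^\theta$ (take $E'\subseteq F$ with $\dim_\mathrm{L}^\theta E'$ near $\dim_\mathrm{ML}^\theta F$ and look at $(E')^n$), while the upper bound requires controlling \emph{every} $K\subseteq F^n$, and $K$ need not meet the diagonal at all. Proposition~\ref{basic3} applied inductively only yields $\dim_\mathrm{ML}^\theta(F^n)\le\dim_\mathrm{ML}^\theta F+(n-1)\dim_\mathrm{A}^\theta F$, and the fibre-projection identity from its proof, iterated, gives the same obstruction. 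So the paper does not actually exhibit an argument for this inequality.

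That said, your proposed fix has a real gap too. You want, given $K\subseteq F^n$, to manufacture a single $G\subseteq F$ with $\dim_\mathrm{L}^\theta G\ge\tfrac1n\dim_\mathrm{L}^\theta K-\epsilon$. From $N(B(\mathbf{x},R)\cap K,r)\le\prod_i N(B(x_i,R)\cap P_iK,r)$ you learn that for every small $R$ and every $\mathbf{x}\in K$, \emph{some} coordinate $i=i(\mathbf{x},R)$ has $N(B(x_i,R)\cap P_iK,r)$ large. But to get $\dim_\mathrm{L}^\theta G$ large you need a set $G$ such that \emph{every} point of $G$ is thick at \emph{every} small scale; a pigeonhole over coordinates only gives you one coordinate that is thick along a subsequence of scales (and possibly varying points), which bounds a $\limsup$, not the $\liminf$ defining $\dim_\mathrm{L}^\theta$. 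A genuine ``scale-by-scale selection'' would have to produce $G$ at once for all scales, and it is not clear how to align the choices. You should either supply that alignment argument explicitly, or flag the $\dim_\mathrm{ML}^\theta$ upper bound as open (noting that the paper does not prove it either).
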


\begin{proof}
This proof is similar to the general case and we omit the details.  The key point is that, for a self-product, one may choose $x \in F$ which witnesses the extremal behaviour at some scale and then consider the point $(x, \dots, x ) \in F^n$.  The projection of this point onto every coordinate  then witnesses extremal behaviour and this passes to $F^n$.
\end{proof}

We note that using a similar approach one may also obtain the following minor, but useful,  improvement on \cite[Theorem 2.1]{Fraser}.  Specifically, we upgrade lower dimension to modified lower dimension which is useful in situations where the lower dimension is small for reasons which do not affect other dimensions, for example when the set $E$ contains an isolated point.
\begin{prop}
For metric spaces $E,F$ we have
\[
 \dim_\mathrm{ML}E + \dim_\mathrm{A}F \leq \dim_\mathrm{A}(E \times F)\leq \dim_\mathrm{A}E+\dim_\mathrm{A}F.
\]
\end{prop}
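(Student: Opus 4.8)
The plan is to deduce both inequalities from facts already recorded in the excerpt (and in \cite{Fraser}), with essentially no new computation. The right-hand inequality $\dim_\mathrm{A}(E\times F)\le \dim_\mathrm{A}E+\dim_\mathrm{A}F$ is the classical product upper bound: working with the sup metric, any ball satisfies $B(x,R)\cap(E\times F)\subseteq \big(B(P_Ex,R)\cap E\big)\times\big(B(P_Fx,R)\cap F\big)$, so covering each factor optimally by $r$-balls and taking products of these covers yields $N(B(x,R)\cap(E\times F),r)\lesssim (R/r)^{\dim_\mathrm{A}E^++\dim_\mathrm{A}F^+}$; letting the exponents decrease to their true values gives the claim (this is also part of \cite[Theorem 2.1]{Fraser}).

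For the left-hand inequality, the key observation is that it follows from the known product formula $\dim_\mathrm{L}E'+\dim_\mathrm{A}F\le \dim_\mathrm{A}(E'\times F)$ for the lower and Assouad dimensions (\cite[Theorem 2.1]{Fraser}) together with monotonicity of the Assouad dimension. Concretely, fix any nonempty $E'\subseteq E$. Since $E'\times F\subseteq E\times F$, monotonicity gives $\dim_\mathrm{A}(E'\times F)\le \dim_\mathrm{A}(E\times F)$, and hence
\[
\dim_\mathrm{L}E'+\dim_\mathrm{A}F\ \le\ \dim_\mathrm{A}(E'\times F)\ \le\ \dim_\mathrm{A}(E\times F).
\]
Taking the supremum over all nonempty $E'\subseteq E$ and recalling $\dim_\mathrm{ML}E=\sup\{\dim_\mathrm{L}E':\emptyset\ne E'\subseteq E\}$ gives exactly $\dim_\mathrm{ML}E+\dim_\mathrm{A}F\le \dim_\mathrm{A}(E\times F)$.

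If one prefers an argument that does not invoke \cite{Fraser}, one can instead mimic the proof of Proposition \ref{basic3}: for a fixed nonempty $E'\subseteq E$, use that in the sup metric the product of an $r$-packing of $B(P_Ex,R)\cap E'$ with an $r$-packing of $B(P_Fx,R)\cap F$ is an $r$-packing of $B(x,R)\cap(E'\times F)$, so that $\sup_{x}N(B(x,R)\cap(E\times F),r)\gtrsim \inf_{y\in E'}N(B(y,R)\cap E',r)\cdot \sup_{z\in F}N(B(z,R)\cap F,r)$ after choosing $x$ whose $E$-coordinate ranges over $E'$ and whose $F$-coordinate (almost) realises the supremum; then bound the first factor below via $\dim_\mathrm{L}E'$ (a bound valid at \emph{all} sufficiently small scale pairs) and the second below via $\dim_\mathrm{A}F$ along a suitable sequence of scale pairs, and finally take the supremum over $E'$. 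The one place where genuine care is needed — the main obstacle in this direct route — is the standard subtlety of Assouad-dimension lower bounds: one must select the scale pairs $(R_k,r_k)$ witnessing $\dim_\mathrm{A}F$ so that $R_k/r_k\to\infty$ (not merely $R_k\to0$), since only then do the two exponents add correctly in the limit. I expect the reduction above to be the path of least resistance, with this direct route serving as a consistency check.
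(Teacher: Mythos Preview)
Your proposal is correct and matches the paper's intent: the paper does not give a separate proof but merely states that the result follows ``using a similar approach'' to Proposition \ref{basic3}, and your reduction (apply the known bound $\dim_\mathrm{L}E'+\dim_\mathrm{A}F\le\dim_\mathrm{A}(E'\times F)$ from \cite[Theorem 2.1]{Fraser}, use monotonicity of Assouad dimension, then sup over $E'\subseteq E$) is exactly the supremum-over-subsets trick used in the proof of Proposition \ref{basic3} to pass from the lower spectrum to the modified lower spectrum. Your alternative direct route, including the caution about choosing scale pairs with $R_k/r_k\to\infty$, is also sound and is the Assouad-dimension analogue of the covering/packing argument in that same proof.
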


Another important aspect of a dimension is how it behaves under distortion by maps which are `not too wild'.  Indeed, all of the standard notions of dimension, such as the Hausdorff, box, packing, Assouad and lower dimension, are stable under bi-Lipschitz distortion, for example.  Relaxing bi-Lipschitz to simply Lipschitz or even H\"older, there are elementary bounds which show that under distortion by an $\alpha$-H\"older map the Hausdorff or box dimensions cannot increase by more than a factor of $1/\alpha$, see \cite[Chapter 2-3]{Falconer}.  Assouad and lower dimension do not enjoy such stability and can wildly increase under distortion by even a Lipschitz map, see \cite{Fraser}.  The reason for this is that because one is trying to control two scales (one in each direction), one needs bounds on the distortion of the map in \emph{both} directions.  Here we conduct a detailed analysis of how the dimension spectra distorts under bi-H\"older maps, i.e., H\"older maps with H\"older inverses.  It is noteworthy that one cannot relate the value of the  Assouad spectrum of the set and its image at a particular value $\theta$, but rather  at two different values of $\theta$ which are related according to the H\"older parameters.  This makes the theory of dimension distortion  for our  spectra rather more subtle than for a dimension which returns a single exponent.  Recall that a doubling metric space is one for which there is a uniform constant $C$ such that any ball may be covered by fewer than $C$ balls of half the radius. This is easily seen to be equivalent to having finite Assouad dimension, see \cite[Lemma 9.4]{Robinson}.

\begin{prop}[H\"older maps] \label{Holderthm}
Let $S : X\rightarrow Y$ be a map between doubling metric spaces  $(X,d_X)$ and $(Y,d_Y)$ such that for all $x,y \in X$ with  $d_X(x,y)$ sufficiently  small,
$$d_X(x,y)^\beta \ \lesssim \ d_Y\big(S(x),S(y)\big) \ \lesssim \  d_X(x,y)^\alpha$$ for some fixed constants $\beta\geq 1\geq\alpha>0$.  Then, for any $F\subseteq X$ and $\theta \in (0,1)$, we have
$$\frac{1-\frac{\beta}{\alpha}\theta}{\beta(1-\theta)} \,  \dim_\mathrm{A}^{\frac{\beta}{\alpha}\theta} F \  \leq \  \dim_\mathrm{A}^{\theta} S(F) \  \leq \  \frac{1-\frac{\alpha}{\beta}\theta}{\alpha(1-\theta)} \, \dim_\mathrm{A}^{\frac{\alpha}{\beta}\theta} F$$ 
$$\frac{1-\frac{\beta}{\alpha}\theta}{\beta(1-\theta)} \, \dim_\mathrm{L}^{\frac{\beta}{\alpha}\theta} F \  \leq \   \dim_\mathrm{L}^{\theta} S(F) \  \leq \  \frac{1-\frac{\alpha}{\beta}\theta}{\alpha(1-\theta)} \,\dim_\mathrm{L}^{\frac{\alpha}{\beta}\theta} F$$ 
and
$$\frac{1-\frac{\beta}{\alpha}\theta}{\beta(1-\theta)} \, \dim_\mathrm{ML}^{\frac{\beta}{\alpha}\theta} F \  \leq \   \dim_\mathrm{ML}^{\theta} S(F) \  \leq \  \frac{1-\frac{\alpha}{\beta}\theta}{\alpha(1-\theta)} \,\dim_\mathrm{ML}^{\frac{\alpha}{\beta}\theta} F.$$
\end{prop}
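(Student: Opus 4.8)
The plan is to prove the three \emph{upper} bounds directly, by a single covering estimate relating $N\big(B_Y(S(x),R)\cap S(F),R^{1/\theta}\big)$ to a covering number of a ball in $F$, and then to deduce the three \emph{lower} bounds by applying these upper bounds to the inverse map. First I would observe that $S$ is injective: if $x\neq y$ then $d_Y(S(x),S(y))\gtrsim d_X(x,y)^\beta>0$. Hence $S\colon X\to S(X)$ is a bijection and $S^{-1}\colon S(X)\to X$ satisfies, for $d_Y(u,v)$ small,
\[
d_Y(u,v)^{1/\alpha}\ \lesssim\ d_X\big(S^{-1}(u),S^{-1}(v)\big)\ \lesssim\ d_Y(u,v)^{1/\beta};
\]
since $1/\alpha\geq 1\geq 1/\beta$, this says $S^{-1}$ is a map of exactly the same type with the pair $(\alpha,\beta)$ replaced by $(1/\beta,1/\alpha)$, and moreover $S^{-1}(S(F))=F$ and $S^{-1}(S(E))=E$ for all $E\subseteq F$.

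The key step is the following covering estimate. Fix $x\in F$, put $y=S(x)$ and $\phi=\tfrac{\alpha}{\beta}\theta$, and let $R>0$ be small. By the lower Hölder bound, every $z\in F$ with $S(z)\in B_Y(y,R)$ has $d_X(x,z)\lesssim R^{1/\beta}$, so $B_Y(y,R)\cap S(F)\subseteq S\big(B_X(x,cR^{1/\beta})\cap F\big)$ for a suitable constant $c$. I would then cover $B_X(x,cR^{1/\beta})\cap F$ optimally by balls of radius $r$; by the upper Hölder bound each such ball has image of diameter $\lesssim r^\alpha$, hence --- invoking that $Y$ is doubling --- lies in a bounded number of balls of radius $r^\alpha$. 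Choosing $r\asymp R^{1/(\alpha\theta)}$ so that $r^\alpha\asymp R^{1/\theta}$ gives
\[
N\big(B_Y(y,R)\cap S(F),\,R^{1/\theta}\big)\ \lesssim\ N\big(B_X(x,cR^{1/\beta})\cap F,\,c'R^{1/(\alpha\theta)}\big),
\]
and writing $\widetilde R:=cR^{1/\beta}$, so that $R\asymp\widetilde R^{\,\beta}$ and $R^{1/(\alpha\theta)}\asymp\widetilde R^{\,1/\phi}$, this reads $N(B_Y(y,R)\cap S(F),R^{1/\theta})\lesssim N(B_X(x,\widetilde R)\cap F,\widetilde R^{\,1/\phi})$, valid for all small $R$ and all $x\in F$.

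Feeding this into the explicit formulae for the spectra yields the upper bounds. Taking $\sup_{x\in F}$ and $\limsup_{R\to 0}$ (equivalently $\widetilde R\to 0$), the right-hand side is $\lesssim\widetilde R^{(1-1/\phi)\dim_\mathrm{A}^{\phi}F^+}\asymp R^{\frac{1}{\beta}(1-1/\phi)\dim_\mathrm{A}^{\phi}F^+}$, whence $\dim_\mathrm{A}^{\theta}S(F)\leq\tfrac{1/\phi-1}{\beta(1/\theta-1)}\dim_\mathrm{A}^{\phi}F$, which a short computation identifies with $\tfrac{1-\frac{\alpha}{\beta}\theta}{\alpha(1-\theta)}\dim_\mathrm{A}^{\frac{\alpha}{\beta}\theta}F$. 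For the lower spectrum I would instead pick a sequence $\widetilde R\to 0$ realising the $\liminf$ defining $\dim_\mathrm{L}^{\phi}F$, together with points $x=x(\widetilde R)\in F$ with nearly minimal $N(B_X(x,\widetilde R)\cap F,\widetilde R^{\,1/\phi})$, and apply the estimate at $y=S(x(\widetilde R))$ along the corresponding sequence $R\to 0$ to obtain $\dim_\mathrm{L}^{\theta}S(F)\leq\tfrac{1-\frac{\alpha}{\beta}\theta}{\alpha(1-\theta)}\dim_\mathrm{L}^{\frac{\alpha}{\beta}\theta}F$. For the modified lower spectrum, since $\{S(E):\emptyset\neq E\subseteq F\}$ is exactly the collection of nonempty subsets of $S(F)$, I apply the lower-spectrum bound with $E$ in place of $F$ and take the supremum over $E$.

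Finally, the three lower bounds follow by applying the three (now proved) upper bounds to $S^{-1}\colon S(X)\to X$ --- which is of type $(1/\beta,1/\alpha)$ --- and to the set $S(F)$, for which $S^{-1}(S(F))=F$, and then replacing $\theta$ by $\tfrac{\beta}{\alpha}\theta$. Since $\tfrac{1/\beta}{1/\alpha}=\tfrac{\alpha}{\beta}$, the upper bound for $S^{-1}$ reads $\dim_\mathrm{A}^{\theta}F\leq\tfrac{\beta(1-\frac{\alpha}{\beta}\theta)}{1-\theta}\dim_\mathrm{A}^{\frac{\alpha}{\beta}\theta}S(F)$, and after the substitution $\theta\mapsto\tfrac{\beta}{\alpha}\theta$ this rearranges to $\dim_\mathrm{A}^{\theta}S(F)\geq\tfrac{1-\frac{\beta}{\alpha}\theta}{\beta(1-\theta)}\dim_\mathrm{A}^{\frac{\beta}{\alpha}\theta}F$; the $\dim_\mathrm{L}$ and $\dim_\mathrm{ML}$ cases are identical. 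When $\tfrac{\beta}{\alpha}\theta\geq 1$ the substitution is unavailable, but then $\dim_\mathrm{A}^{\frac{\beta}{\alpha}\theta}F=0$ by the extension convention and the lower bound holds trivially. The main obstacle is bookkeeping rather than conceptual: keeping the $\limsup$/$\liminf$ and $(\cdot)^{\pm}$ notation consistent through the two reparametrisations, and invoking the doubling hypothesis precisely where an image (resp. preimage) of a ball of one radius must be re-covered by boundedly many balls of another radius.
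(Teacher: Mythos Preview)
Your proposal is correct and the core covering estimate is the same one the paper uses. The organisation differs in one genuine way worth noting. The paper establishes a two-sided sandwich
\[
N\big(B(x,R^{1/\alpha}),r^{1/\beta}\big)\ \lesssim\ N\big(B(S(x),R),r\big)\ \lesssim\ N\big(B(x,R^{1/\beta}),r^{1/\alpha}\big)
\]
in one stroke (using the ball inclusions $B(S(x),cr^\beta)\subseteq S(B(x,r))\subseteq B(S(x),Cr^\alpha)$ in both directions), and then reads off the upper and lower bounds for the spectrum simultaneously by setting $r=R^{1/\theta}$ and comparing exponents on each side. You instead prove only the right-hand inequality, deduce all three upper bounds, and then obtain the lower bounds by applying those upper bounds to $S^{-1}$ (with parameters $(1/\beta,1/\alpha)$) followed by the substitution $\theta\mapsto\tfrac{\beta}{\alpha}\theta$. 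Your duality shortcut is a clean way to avoid tracking the $\limsup$/$\liminf$ through the lower inequality separately, and it makes the handling of the edge case $\tfrac{\beta}{\alpha}\theta\geq 1$ transparent via the extension convention; the paper's sandwich, on the other hand, keeps everything at the level of covering numbers and avoids any reparametrisation of $\theta$. Both routes rest on exactly the same bi-H\"older ball-inclusion and the doubling hypothesis in the same places, so the difference is one of packaging rather than substance.
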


\begin{proof}
First notice that $S$ is invertible and for all $x,y \in S(X)$ with $d_Y(x,y)$ sufficiently  small we have
$$d_Y(x,y)^{1/\alpha} \ \lesssim  \   d_X\big( S^{-1}(x), S^{-1}(y) \big) \  \lesssim \ d_Y(x,y)^{1/\beta}.$$
By the assumptions on $S$ there are uniform constants $C,c>0$ such that for any sufficiently small $r>0$ and $x \in X$ and $y \in S(X)$ we have
\[
B(S(x),cr^{\beta})\subseteq S(B(x,r))\subseteq B(S(x), Cr^{\alpha})
\]
and
\[
B(S^{-1}(y),cr^{1/\alpha})\subseteq S^{-1}(B(y,r))\subseteq B(S^{-1}(y), Cr^{1/\beta}).
\]
Therefore, for $0<r<R$ with $R$ small enough (recall that our metric space has the doubling property) we have
$$N(B(x,R^{1/\alpha}),r^{1/\beta}) \lesssim N(B(S(x),R),r)\lesssim N(B(x,R^{1/\beta}),r^{1/\alpha})$$
 for any $x \in X$.  The left inequality holds because any $r$-cover of $B(S(x),R)$ can be mapped under $S^{-1}$ to yield an (up to multiplicative constants) $r^{1/\beta}$-cover of $B(x,R^{1/\alpha})$ by the same number of sets (up to another multiplicative constant depending on the doubling property of the space).  Similarly, the right inequality holds because any $r^{1/\alpha}$-cover of $B(x,R^{1/\beta})$ can be mapped under $S$ to yield an (up to multiplicative constants)  $r$-cover of $B(S(x),R)$ by the same number of sets (up to another multiplicative constant depending on the doubling property of the space).

Setting $r=R^{1/\theta}$ from here we notice that for any sufficiently small $R>0$ we have by definition
\[
N(B(x,R^{1/\beta}),r^{1/\alpha}) \ \lesssim \   \left( \frac{R^{1/\beta}}{(R^{1/\beta})^{\frac{\beta}{\alpha \theta}}}\right)^{\dim_\mathrm{A}^{\frac{\alpha\theta}{\beta}}F^+}  \ = \  (R^{1-1/\theta})^{\frac{1-\frac{\alpha}{\beta}\theta}{\alpha(1-\theta)}\dim_\mathrm{A}^{\frac{\alpha\theta}{\beta}}F^+}
\]
and, similarly, for infinitely many $R \to 0$ we have
\[
N(B(x,R^{1/\alpha}),r^{1/\beta}) \ \gtrsim \   \left( \frac{R^{1/\alpha}}{(R^{1/\alpha})^{\frac{\alpha}{\beta \theta}}}\right)^{\dim_\mathrm{A}^{\frac{\beta\theta}{\alpha}}F^-}  \ = \  (R^{1-1/\theta})^{\frac{1-\frac{\beta}{\alpha}\theta}{\beta(1-\theta)}\dim_\mathrm{A}^{\frac{\beta\theta}{\alpha}}F^-}.
\]
Recall that if $\beta \theta/\alpha \geq 1$, then $\dim_\mathrm{A}^{\frac{\beta\theta}{\alpha}}F = 0$.  Also by definition, for any sufficiently small $R>0$ we have
$$N(B(S(x),R),r)\ \lesssim \ (R^{1-1/\theta})^{\dim_\mathrm{A}^{\theta} S(F)^+}$$
and for infinitely many $R \to 0$ we have
$$N(B(S(x),R),r)\ \gtrsim \ (R^{1-1/\theta})^{\dim_\mathrm{A}^{\theta} S(F)^-}.$$
Together these estimates yield that for infinitely many $R \to 0$ we have
\[
(R^{1-1/\theta})^{\dim_\mathrm{A}^{\theta} S(F)^-} \lesssim (R^{1-1/\theta})^{\frac{1-\frac{\alpha}{\beta}\theta}{\alpha(1-\theta)}\dim_\mathrm{A}^{\frac{\alpha\theta}{\beta}}F^+}
\]
and
\[
(R^{1-1/\theta})^{\frac{1-\frac{\beta}{\alpha}\theta}{\beta(1-\theta)}\dim_\mathrm{A}^{\frac{\beta\theta}{\alpha}}F^-} \ \lesssim\  (R^{1-1/\theta})^{\dim_\mathrm{A}^{\theta} S(F)^+}   
\]
which gives
$$\frac{1-\frac{\beta}{\alpha}\theta}{\beta(1-\theta)}\dim_\mathrm{A}^{\frac{\beta\theta}{\alpha}}F \ \leq \  \dim_\mathrm{A}^{\theta}S(F) \ \leq \  \frac{1-\frac{\alpha}{\beta}\theta}{\alpha(1-\theta)}\dim_\mathrm{A}^{\frac{\alpha\theta}{\beta}}F$$
as required. The argument for the lower spectrum is similar and omitted.  Since subsets of $F$ are in one to one correspondence with subsets of $S(F)$ through the map $S$, we may take supremum over nonempty subsets of $F$ throughout, which yields the analogous estimates for the modified lower spectrum, completing the proof.
\end{proof}

The lower bounds for the  spectra of $S(F)$ all become equal to 0 (and thus trivial) when $\theta \geq \alpha/\beta$ and the upper bounds for the spectra of $S(F)$ blow up as $\theta \to 1$.  These are unfortunate properties but are indicative of the complex relations between the spectra at different values of $\theta$ once the set has been distorted by $S$.  One can rectify this situation somewhat by combining our estimates with Lemmas \ref{BB} and \ref{BBL} and the classical results that upper and lower box dimension cannot increase by more than a factor of $1/\alpha$ under distortion by an  $\alpha$-H\"older map. 

\begin{cor} \label{Holdercor}
Let $S : X\rightarrow Y$ be as in Proposition \ref{Holderthm}.  Then for any $F \subseteq X$ and $\theta \in (0,1)$
\[
\frac{1-\frac{\beta}{\alpha}\theta}{\beta(1-\theta)}\dim_\mathrm{A}^{\frac{\beta\theta}{\alpha}}F  \, \vee \,   \frac{\overline{\dim}_\mathrm{B} F}{\beta}    \   \ \leq \  \dim_\mathrm{A}^{\theta} S(F) \  \leq \  \frac{1-\frac{\alpha}{\beta}\theta}{\alpha(1-\theta)}\dim_\mathrm{A}^{\frac{\alpha}{\beta}\theta} F 
\]
\[ \frac{1-\frac{\beta}{\alpha}\theta}{\beta(1-\theta)}\dim_\mathrm{L}^{\frac{\beta\theta}{\alpha}}F \ \leq \ \dim_\mathrm{L}^{\theta} S(F) \  \leq \  \frac{1-\frac{\alpha}{\beta}\theta}{\alpha(1-\theta)}\dim_\mathrm{L}^{\frac{\alpha}{\beta}\theta} F  \, \wedge \,  \frac{\underline{\dim}_\mathrm{B} F}{\alpha}
\]
and
\[ \frac{1-\frac{\beta}{\alpha}\theta}{\beta(1-\theta)}\dim_\mathrm{ML}^{\frac{\beta\theta}{\alpha}}F \ \leq \ \dim_\mathrm{ML}^{\theta} S(F) \  \leq \  \frac{1-\frac{\alpha}{\beta}\theta}{\alpha(1-\theta)}\dim_\mathrm{ML}^{\frac{\alpha}{\beta}\theta} F  \, \wedge \,  \frac{\underline{\dim}_\mathrm{B} F}{\alpha}.
\]
\end{cor}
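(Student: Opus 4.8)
The plan is to assemble the corollary from three ingredients already available: Proposition~\ref{Holderthm}, the general bounds of Propositions~\ref{BB} and~\ref{BBL}, and the classical fact that a $\gamma$-H\"older map cannot increase the upper (respectively lower) box dimension by more than a factor $1/\gamma$, see \cite[Chapters 2--3]{Falconer}. The two-sided estimates relating $\dim_\mathrm{A}^{\theta}S(F)$ to $\dim_\mathrm{A}^{\beta\theta/\alpha}F$ and $\dim_\mathrm{A}^{\alpha\theta/\beta}F$ (and the analogues for the lower and modified lower spectra) are precisely the content of Proposition~\ref{Holderthm}, so the only work is to splice in the extra term $\overline{\dim}_\mathrm{B}F/\beta$ on the left of the Assouad estimate and the extra term $\underline{\dim}_\mathrm{B}F/\alpha$ on the right of the (modified) lower estimates.

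For the Assouad lower bound I would argue as follows. By Proposition~\ref{BB} applied to the set $S(F)$ we have $\dim_\mathrm{A}^{\theta}S(F)\geq\overline{\dim}_\mathrm{B}S(F)$. As recorded at the start of the proof of Proposition~\ref{Holderthm}, the inverse map $S^{-1}$ is $(1/\beta)$-H\"older on $S(F)$; applying the classical box-dimension distortion bound to $S^{-1}$ and the set $S(F)$ gives $\overline{\dim}_\mathrm{B}F=\overline{\dim}_\mathrm{B}S^{-1}(S(F))\leq\beta\,\overline{\dim}_\mathrm{B}S(F)$, i.e. $\overline{\dim}_\mathrm{B}S(F)\geq\overline{\dim}_\mathrm{B}F/\beta$. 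Chaining these two inequalities yields $\dim_\mathrm{A}^{\theta}S(F)\geq\overline{\dim}_\mathrm{B}F/\beta$, which together with the lower bound from Proposition~\ref{Holderthm} produces the maximum $\frac{1-\frac{\beta}{\alpha}\theta}{\beta(1-\theta)}\dim_\mathrm{A}^{\frac{\beta\theta}{\alpha}}F\,\vee\,\overline{\dim}_\mathrm{B}F/\beta$.

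For the lower and modified lower upper bounds, Proposition~\ref{BBL} applied to $S(F)$ gives $\dim_\mathrm{L}^{\theta}S(F)\leq\underline{\dim}_\mathrm{B}S(F)$ and $\dim_\mathrm{ML}^{\theta}S(F)\leq\underline{\dim}_\mathrm{B}S(F)$; since $S$ itself is $\alpha$-H\"older, the classical distortion bound gives $\underline{\dim}_\mathrm{B}S(F)\leq\underline{\dim}_\mathrm{B}F/\alpha$, and combining with the upper bounds of Proposition~\ref{Holderthm} gives the stated minima. There is no substantive obstacle here; the corollary is essentially a bookkeeping exercise. The only points requiring care are keeping straight which of $S$ and $S^{-1}$ is H\"older with which exponent and in which direction, invoking Propositions~\ref{BB} and~\ref{BBL} for $S(F)$ rather than $F$, and noting that under the standing hypotheses the sets in play are totally bounded so that the box dimensions appearing are well defined.
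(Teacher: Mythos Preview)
Your proposal is correct and matches the paper's own treatment essentially verbatim: the paper states the corollary without a separate proof, having already explained in the preceding paragraph that it follows by combining Proposition~\ref{Holderthm} with Propositions~\ref{BB} and~\ref{BBL} and the classical fact that box dimensions cannot increase by more than a factor of $1/\gamma$ under a $\gamma$-H\"older map. Your chaining $\dim_\mathrm{A}^{\theta}S(F)\geq\overline{\dim}_\mathrm{B}S(F)\geq\overline{\dim}_\mathrm{B}F/\beta$ via the $(1/\beta)$-H\"older property of $S^{-1}$, and the dual argument for the lower spectra via the $\alpha$-H\"older property of $S$, is exactly what is intended.
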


Note that in the above we could also  bound $ \dim_\mathrm{A}^{\theta} S(F) $ from above by  $\dim_\mathrm{A} S(F) $, but the point is to bound dimensions of $S(F)$ by expressions involving only dimensions of $F$ and the bi-H\"older restrictions on $S$ are not enough to yield bounds for the \emph{Assouad dimension} of $S(F)$ in terms of $F$.

Also, we could have used Lemma \ref{BB} to apparently improve the upper bound for the Assouad spectrum to include the bound
\[
\dim_\mathrm{A}^{\theta} S(F) \  \leq \   \frac{\overline{\dim}_\mathrm{B} S(F)}{(1-\theta)}  \  \leq \   \frac{\overline{\dim}_\mathrm{B} F}{\alpha(1-\theta)}
\]
but by virtue of  Lemma \ref{BB} it follows that
\[
\overline{\dim}_\mathrm{B} F \ \geq \   \left(1-\frac{\alpha}{\beta}\theta\right)\dim_\mathrm{A}^{\frac{\alpha}{\beta}\theta} F
\]
for all $\theta \in (0,1)$ and so this estimate cannot improve the one we already have.

It is important to comment on the sharpness of the estimates from Corollary \ref{Holdercor}.  A first observation is that such estimates cannot possibly be sharp in any precise sense, although letting $\alpha,\beta \to 1$ shows that they  are at least asymptotically sharp.  The reason for this is that they are based on knowledge of the extremal distortion of $F$ over the whole space and the spectra are only sensitive to the extremal properties of the set in question.  Indeed, the thickest part of the set $F$ (and $S(F)$), which determines the spectra,  may occur at a location in the domain of $S$ where the distortion is less than the global extreme.   We will consider our estimates in detail for a natural family of sets and bi-H\"older maps in Section \ref{seq1}.

Setting $\alpha=\beta=1$ in Proposition \ref{Holderthm}, we obtain bi-Lipschitz stability as another immediate corollary.
\begin{cor} \label{bilipcor}
The Assouad, lower, and modified lower, spectra are bi-Lipschitz invariant.
\end{cor}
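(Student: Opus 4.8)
The plan is to obtain this as the $\alpha=\beta=1$ special case of Proposition \ref{Holderthm}. The first step is the elementary observation that a map $S$ between metric spaces is a bi-Lipschitz embedding precisely when it satisfies the hypothesis of Proposition \ref{Holderthm} with $\alpha=\beta=1$: the two-sided bound $d_X(x,y)^\beta \lesssim d_Y(S(x),S(y)) \lesssim d_X(x,y)^\alpha$ with $\beta=\alpha=1$ is exactly $d_Y(S(x),S(y)) \asymp d_X(x,y)$ (which in particular forces $S$ to be injective, hence a bijection onto its image with both $S$ and $S^{-1}$ Lipschitz), and conversely this is the defining estimate of a bi-Lipschitz map.

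The second step is purely arithmetic. Substituting $\alpha=\beta=1$ into the three displayed chains of inequalities in Proposition \ref{Holderthm}, every prefactor of the form $\frac{1-\frac{\beta}{\alpha}\theta}{\beta(1-\theta)}$ or $\frac{1-\frac{\alpha}{\beta}\theta}{\alpha(1-\theta)}$ collapses to $\frac{1-\theta}{1-\theta}=1$, and every exponent $\frac{\beta}{\alpha}\theta$ or $\frac{\alpha}{\beta}\theta$ collapses to $\theta$. Hence for every $\theta\in(0,1)$ one gets the squeeze
\[
\dim_\mathrm{A}^{\theta}F \;\leq\; \dim_\mathrm{A}^{\theta}S(F) \;\leq\; \dim_\mathrm{A}^{\theta}F,
\]
together with the analogous squeezes for $\dim_\mathrm{L}^{\theta}$ and $\dim_\mathrm{ML}^{\theta}$, so that all three spectra are preserved by $S$ at every $\theta$. (The squeeze already gives equality; applying the argument additionally to the bi-Lipschitz inverse $S^{-1}$ merely recasts the statement in the customary two-sided form.)

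I do not expect a genuine obstacle — this is a one-line corollary — but one small point deserves a remark: Proposition \ref{Holderthm} is stated for doubling spaces, whereas bi-Lipschitz invariance should not require this. There is no loss: the doubling property is itself a bi-Lipschitz invariant, so $F$ and $S(F)$ are doubling or not together, and in any case the spectra only carry interesting information on totally bounded (hence doubling) sets, which covers the regime of interest. If one instead wanted a fully self-contained argument, the alternative plan would be to transport a $\delta$-cover of $B(x,R)\cap F$ forward under $S$ to a cover of $B(S(x),LR)\cap S(F)$ by the same number of sets of diameter $\lesssim\delta$, and symmetrically via $S^{-1}$, where $L$ is the bi-Lipschitz constant; the only mildly delicate step is then absorbing the $O(1)$ perturbations of both $R$ and of the covering scale $R^{1/\theta}$, which in the explicit $\limsup$/$\liminf$ formulae for the spectra contribute an error that disappears upon dividing by $\log R$ (using the doubling of $F$ to compare covering numbers at radii differing by a bounded factor).
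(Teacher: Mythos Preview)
Your proposal is correct and matches the paper's own approach exactly: the paper simply states that setting $\alpha=\beta=1$ in Proposition \ref{Holderthm} yields the corollary. Your additional remark about the doubling hypothesis is a nice observation that the paper does not make explicit.
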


Being bi-Lipschitz invariant is a useful property and one possible application is in classifying metric spaces up to bi-Lipschitz equivalence.  There has been considerable interest in this problem since the seminal paper of Falconer and Marsh \cite{FalconerMarsh} which sought to determine for which pairs of self-similar subsets of the line one can find a bi-Lipschitz function taking one to the other.  Having the same Hausdorff dimension is a necessary condition, since Hausdorff dimension is a bi-Lipschitz invariant, but it is not sufficient: there are self-similar subsets of the line which have the same Hausdorff dimension but which are not bi-Lipschitz equivalent.  As such, it is useful to find other bi-Lipschitz invariants, such as the other notions of dimension mentioned above.  Corollary \ref{bilipcor} provides a new continuum of bi-Lipschitz invariants and so has potential applications in proving that certain metric spaces are not bi-Lipschitz equivalent, even if their Hausdorff, box, packing, Assouad and lower dimensions are equal.

Suppose $S$ is a map on $F$ such that
\[
\frac{\log |x-y|}{\log |S(x) - S(y) |} \to 1
\]
uniformly as $|x-y| \to 0$.  Such maps are sometimes called \emph{quasi-Lipschitz}, see \cite{quasiassouad, quasiassouad2}.  Rather than setting $\alpha=\beta=1$ in Proposition \ref{Holderthm}, if we just let $\alpha, \beta \to 1$ we see that the spectra are also all invariant under \emph{quasi-Lipschitz} maps.

\begin{cor} \label{quasibilipcor}
The Assouad, lower, and modified lower, spectra are quasi-Lipschitz invariant.
\end{cor}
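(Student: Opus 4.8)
The plan is to derive this as a limiting case of the bi-H\"older estimates in Proposition~\ref{Holderthm}, using the continuity of the spectra (Corollary~\ref{Con} and Theorem~\ref{Con2}) to close the limit. Fix $\theta\in(0,1)$. The first step is to convert the quasi-Lipschitz hypothesis into genuine bi-H\"older control with exponents near $1$: given $\eta\in(0,1)$, uniform convergence of $\log|x-y|/\log|S(x)-S(y)|$ to $1$ yields a $\rho\in(0,1)$ so that for all $x,y\in F$ with $0<|x-y|<\rho$ we have
\[
|x-y|^{1+\eta}\ \le\ |S(x)-S(y)|\ \le\ |x-y|^{1-\eta}.
\]
The only subtlety here is sign bookkeeping, since $\log t<0$ for $t\in(0,1)$: the two-sided bound on the ratio gives $\tfrac{\log|x-y|}{1-\eta'}<\log|S(x)-S(y)|<\tfrac{\log|x-y|}{1+\eta'}$ for a suitable $\eta'=\eta'(\eta)$, and exponentiating (using $|x-y|<1$, so that the larger exponent gives the smaller power) yields the displayed inequalities once $\eta'$ is small enough. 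In particular $S$ is injective at small scales, so since $F$ is totally bounded a routine reduction via finite stability (Proposition~\ref{basic1}) lets us assume $S\colon F\to S(F)$ is a bijection; thus Proposition~\ref{Holderthm} applies with $\beta=1+\eta\ge 1\ge 1-\eta=\alpha>0$.

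Applying Proposition~\ref{Holderthm} with these $\alpha,\beta$ --- and noting that for $\eta$ small enough $\tfrac{\beta}{\alpha}\theta=\tfrac{1+\eta}{1-\eta}\theta<1$ and $\tfrac{\alpha}{\beta}\theta<1$, so the spectra on the two sides lie in the range $(0,1)$ where they are genuinely informative --- gives
\[
\frac{1-\tfrac{\beta}{\alpha}\theta}{\beta(1-\theta)}\,\dim_\mathrm{A}^{\frac{\beta}{\alpha}\theta}F\ \le\ \dim_\mathrm{A}^{\theta}S(F)\ \le\ \frac{1-\tfrac{\alpha}{\beta}\theta}{\alpha(1-\theta)}\,\dim_\mathrm{A}^{\frac{\alpha}{\beta}\theta}F,
\]
together with the analogous two-sided bounds for $\dim_\mathrm{L}^{\theta}$ and $\dim_\mathrm{ML}^{\theta}$ coming from the corresponding chains in Proposition~\ref{Holderthm}.

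Finally, let $\eta\to 0^+$. Then $\tfrac{\beta}{\alpha}=\tfrac{1+\eta}{1-\eta}\to 1$ and $\tfrac{\alpha}{\beta}\to 1$, so the prefactors $\tfrac{1-(\beta/\alpha)\theta}{\beta(1-\theta)}$ and $\tfrac{1-(\alpha/\beta)\theta}{\alpha(1-\theta)}$ both tend to $\tfrac{1-\theta}{1-\theta}=1$, while by the continuity of $\theta\mapsto\dim_\mathrm{A}^{\theta}F$ on $(0,1)$ (Corollary~\ref{Con}) both $\dim_\mathrm{A}^{\frac{\beta}{\alpha}\theta}F$ and $\dim_\mathrm{A}^{\frac{\alpha}{\beta}\theta}F$ converge to $\dim_\mathrm{A}^{\theta}F$. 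Hence the outer expressions in the display both converge to $\dim_\mathrm{A}^{\theta}F$, forcing $\dim_\mathrm{A}^{\theta}S(F)=\dim_\mathrm{A}^{\theta}F$; running the same squeeze with Theorem~\ref{Con2} in place of Corollary~\ref{Con} handles $\dim_\mathrm{L}^{\theta}$ and $\dim_\mathrm{ML}^{\theta}$, and since $\theta\in(0,1)$ was arbitrary all three spectra are quasi-Lipschitz invariant. No deep obstacle arises: the only points needing care are the logarithmic sign bookkeeping in the first step and checking that the arguments $\tfrac{\beta}{\alpha}\theta$ and $\tfrac{\alpha}{\beta}\theta$ remain inside $(0,1)$ as $\eta\to 0$ so that continuity may be applied; everything else is a direct appeal to results already established.
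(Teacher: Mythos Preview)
Your approach is essentially the same as the paper's: the paper simply remarks that, rather than setting $\alpha=\beta=1$ in Proposition~\ref{Holderthm}, one lets $\alpha,\beta\to 1$, which is exactly the squeeze you carry out. You have filled in the details the paper leaves implicit --- in particular, the explicit appeal to continuity of the spectra (Corollary~\ref{Con} and Theorem~\ref{Con2}) to pass to the limit in the arguments $\tfrac{\beta}{\alpha}\theta$ and $\tfrac{\alpha}{\beta}\theta$ --- so your proof is correct and follows the intended route.
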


\subsection{Bi-H\"older distortion for Assouad dimension}

In Proposition \ref{Holderthm} we gave some estimates for the Assouad spectrum of a set after distortion  by a  bi-H\"older map. Similar, but simpler, estimates hold for the other standard notions of dimension, such as the Hausdorff dimension and upper and lower box dimension.  In light of the other known results, one might expect that:
\[
\frac{1}{\beta}\dim_\mathrm{A}F    \   \ \leq \  \dim_\mathrm{A} S(F) \  \leq \  \frac{1}{\alpha}\dim_\mathrm{A}F
\]
for any bi-H\"older map with parameters $0<\alpha \leq 1 \leq \beta< \infty$.  In particular, these bounds hold if Assouad dimension is replaced by Hausdorff, packing, upper box or lower box dimension.  The situation turns out to be more subtle for Assouad dimension.   In particular,  the Assouad dimension may be distorted by an absolute constant for bi-H\"older maps with parameters arbitrarily close to 1, i.e., maps which are arbitrarily close to being bi-Lipschitz.  In fact L\"u and Xi \cite[Proposition 1.2]{quasiassouad2} proved that for any $s,t \in (0,1]$ one may find subsets of $[0,1]$ with Assouad dimension $s$ and $t$ respectively, such that one is a quasi-Lipschitz image of the other and vice versa. This shows that there do not exist general bounds on the Assouad dimension of $S(F)$ in terms of the Assouad dimension of $F$ and the H\"older parameters $\alpha$ and $\beta$.  However, we prove that if one assumes that the Assouad spectrum reaches the Assouad dimension, then one \emph{can} give non-trivial dimension bounds for distortion under bi-H\"older maps.  In particular, to obtain some bounds one needs additional assumptions about the set $F$.

\begin{thm} \label{Holderassouadcor}
Let $S : X\rightarrow Y$ be as in Proposition \ref{Holderthm} and let
\[
\theta_0=\inf\left\{ \theta\in [0,1] : \dim_\mathrm{A}^{\theta} F=\dim_\mathrm{A} F \right\}
\]
assuming the set of suitable $\theta$s is non-empty. Then for any $F \subseteq X$ we have
\[
\dim_\mathrm{A} S(F)\geq \frac{\dim_\mathrm{A} F}{\beta-\theta_0\alpha}(1-\theta_0).
\]
In particular, if $S$ is quasi-Lipschitz and $\dim_\mathrm{A}^{\theta} F=\dim_\mathrm{A} F$ for some $\theta$, then $\dim_\mathrm{A} S(F)\geq\dim_\mathrm{A} F$.
\end{thm}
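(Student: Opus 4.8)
The plan is to combine the lower bound of Proposition~\ref{Holderthm} for the Assouad spectrum with the ``stays at the Assouad dimension'' phenomenon of Corollary~\ref{stayA}, and then optimise over the free parameter. Recall that Proposition~\ref{Holderthm} gives, for every $\theta\in(0,1)$,
\[
\frac{1-\frac{\beta}{\alpha}\theta}{\beta(1-\theta)}\,\dim_\mathrm{A}^{\frac{\beta}{\alpha}\theta}F \ \leq\ \dim_\mathrm{A}^{\theta}S(F).
\]
Since $\beta\geq\alpha$ the exponent $\frac{\beta}{\alpha}\theta$ on the left is at least $\theta$, so by choosing $\theta$ so that $\frac{\beta}{\alpha}\theta$ sits just above $\theta_0$ (but still below $1$), Corollary~\ref{stayA} will let us replace $\dim_\mathrm{A}^{\frac{\beta}{\alpha}\theta}F$ by $\dim_\mathrm{A}F$ on the left-hand side.

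First I would dispose of the trivial case $\dim_\mathrm{A}F=0$; assume henceforth $\dim_\mathrm{A}F>0$, which (since $\dim_\mathrm{A}^{\theta}F=0$ for $\theta\geq1$) forces $\theta_0<1$. Fix $\theta'\in(\theta_0,1)$. By definition of $\theta_0$ as the infimum of the non-empty set $\{\theta:\dim_\mathrm{A}^{\theta}F=\dim_\mathrm{A}F\}$, there is some $\theta''\in(0,\theta')$ with $\dim_\mathrm{A}^{\theta''}F=\dim_\mathrm{A}F$, and then Corollary~\ref{stayA} gives $\dim_\mathrm{A}^{\theta'}F=\dim_\mathrm{A}F$. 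Now set $\theta=\frac{\alpha}{\beta}\theta'\in(0,1)$, so that $\frac{\beta}{\alpha}\theta=\theta'$ and $1-\theta=\frac{\beta-\alpha\theta'}{\beta}$; substituting into the displayed inequality yields
\[
\dim_\mathrm{A}S(F)\ \geq\ \dim_\mathrm{A}^{\theta}S(F)\ \geq\ \frac{1-\theta'}{\beta-\alpha\theta'}\,\dim_\mathrm{A}^{\theta'}F\ =\ \frac{1-\theta'}{\beta-\alpha\theta'}\,\dim_\mathrm{A}F.
\]
This holds for every $\theta'\in(\theta_0,1)$, and since $t\mapsto\frac{1-t}{\beta-\alpha t}$ has derivative of sign $\alpha-\beta\leq0$ and is therefore non-increasing on $[0,1]$, letting $\theta'\to\theta_0^{+}$ gives
\[
\dim_\mathrm{A}S(F)\ \geq\ \frac{1-\theta_0}{\beta-\alpha\theta_0}\,\dim_\mathrm{A}F\ =\ \frac{\dim_\mathrm{A}F}{\beta-\theta_0\alpha}(1-\theta_0),
\]
as required. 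For the quasi-Lipschitz addendum, apply this with $\alpha=1-\varepsilon$ and $\beta=1+\varepsilon$ for arbitrarily small $\varepsilon>0$: the bound becomes $\dfrac{(1-\theta_0)\dim_\mathrm{A}F}{1-\theta_0+\varepsilon(1+\theta_0)}$, which tends to $\dim_\mathrm{A}F$ as $\varepsilon\to0$.

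The computations above are routine and I do not expect a genuine obstacle: the only points needing care are checking $\theta=\frac{\alpha}{\beta}\theta'\in(0,1)$ so that Proposition~\ref{Holderthm} actually applies, and the mild book-keeping around the infimum defining $\theta_0$. The conceptual content — and the reason the hypothesis that the spectrum \emph{reaches} $\dim_\mathrm{A}F$ is indispensable — is that without it the term $\dim_\mathrm{A}^{\frac{\beta}{\alpha}\theta}F$ in Proposition~\ref{Holderthm} could be strictly smaller than $\dim_\mathrm{A}F$ for every admissible $\theta$ (indeed equal to $0$ once $\frac{\beta}{\alpha}\theta\geq1$), and the example of L\"u and Xi then shows that no bound for $\dim_\mathrm{A}S(F)$ in terms of $\dim_\mathrm{A}F$ and the H\"older data alone can hold.
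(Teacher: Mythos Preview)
Your proof is correct and follows essentially the same approach as the paper: both combine the lower bound from Proposition~\ref{Holderthm} with the trivial bound $\dim_\mathrm{A} S(F)\geq\dim_\mathrm{A}^{\theta}S(F)$ and choose the parameter $\theta=\frac{\alpha}{\beta}\theta_0$ (or, in your version, $\theta=\frac{\alpha}{\beta}\theta'$ with $\theta'\searrow\theta_0$). Your treatment is in fact a little more careful than the paper's, which plugs in $\theta_0$ directly and thereby implicitly relies on continuity of the spectrum to ensure $\dim_\mathrm{A}^{\theta_0}F=\dim_\mathrm{A}F$ and tacitly assumes $\theta_0>0$; your limiting argument sidesteps both of these minor issues.
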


Before we prove the result, note that if $\theta_0$ exists and $\dim_\mathrm{A} F > \overline{\dim}_\mathrm{B} F$, then Proposition \ref{BB} implies that
\[
\theta_0 \geq  1- \frac{\overline{\dim}_\mathrm{B} F}{\dim_\mathrm{A} F} > 0.
\]
\begin{proof}
We have for any $\theta\in [0,1]$ that
$$\dim_\mathrm{A} S(F)\geq \dim_\mathrm{A}^\theta S(F)$$
and Proposition \ref{Holderthm} further implies that
$$\dim_\mathrm{A}^\theta S(F)\geq \frac{1-\frac{\beta}{\alpha}\theta}{\beta(1-\theta)} \dim_\mathrm{A}^{\frac{\beta}{\alpha}\theta} F.  $$
Therefore, applying these inequalities with $\theta=\frac{\alpha}{\beta}\theta_0 \in (0,1)$ proves the desired lower bound.
\end{proof}

Observe that the smaller $\theta_m$ is, the better our lower bound for the Assouad dimension of $S(F)$.   It is natural to consider an analogous upper bound, but this would require \emph{a priori} knowledge of the set $S(F)$, i.e., we have to know $\theta_0'=\inf\lbrace \theta\in [0,1] : \dim_\mathrm{A}^{\theta} S(F)=\dim_\mathrm{A} S(F) \rbrace $.  One may obtain a precise analogue by applying the above theorem with $S$ replaced by $S^{-1}$, but we do not pursue the details here.

We can also derive similar results for the lower and modified lower dimension using Proposition \ref{Holderthm}, but we leave the precise formulations to the reader.

\section{Measureability properties}  \label{BorelSect}

In this section we consider the Borel measureability of the Assouad and lower spectra.  This question has  previously been investigated in related contexts.  For example, Mattila and Mauldin \cite{mattilamauldin} proved that the Hausdorff dimension and upper and lower box dimensions are Borel measurable and, moreover, are of Baire class 2, but packing dimension is \emph{not} Borel measurable.  In \cite{Fraser} it was shown that the Assouad and lower dimensions are Borel measureable.  More precisely, it was shown that Assouad dimension is Baire 2, but it was only shown that lower dimension is Baire 3, leaving open the possibility that it is Baire 2 \cite[Question 4.6]{Fraser}.  We solve this problem here by proving that the lower dimension is in fact \emph{precisely} Baire 2.

The \emph{Baire hierarchy} is used to classify functions by their `level of discontinuity' and is formulated for functions between metric spaces $(A, d_A)$ and $(B, d_B)$ as follows.  A function $f: A \to B$ is  Baire  0 if it is continuous.  The latter classes are defined inductively by saying that a function $f:A \to B$ is Baire  $n+1$ if it is a pointwise limit of a sequence of Baire  $n$ functions.  Thus functions lying in higher Baire classes (and not in lower ones) are set theoretically further away  from being continuous.  For more details on the Baire hierarchy, see \cite{kechris}. In particular, if a function belongs to any Baire class, then it is Borel measurable.  

Let $\mathcal{K}(X)$ denote the set of all non-empty compact subsets of a non-empty compact metric space $X$ and endow this space with the Hausdorff metric, $d_\mathcal{H}$, defined by
\[
d_\mathcal{H}(E,F) = \inf \{ \varepsilon>0: E \subseteq F_\varepsilon \text{ and } F \subseteq E_\varepsilon\}
\]
for $E,F \in \mathcal{K}(X)$ and where $E_\varepsilon$ denotes the $\varepsilon$-neighbourhood of $E$.  The metric space $(\mathcal{K}(X),d_\mathcal{H})$ is compact.  Equip the product space  $\mathcal{K}(X) \times (0,1)$ with the product topology and any compatible metric. 
\begin{thm} \label{meas1}
The function $\Delta_\text{\emph{A}}: \mathcal{K}(X) \times (0,1) \to \mathbb{R}$ defined by
\[
\Delta_\text{\emph{A}}(F,\theta) = \dim_\text{\emph{A}}^\theta F
\]
is of Baire class 2 and, in particular, Borel measurable. Moreover, it is not in general of Baire class 1.
\end{thm}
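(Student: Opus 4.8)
The plan is to exhibit $\Delta_{\mathrm A}$ as a pointwise limit of a sequence of Baire class $1$ functions, and then separately rule out Baire class $1$ by a lower-semicontinuity-type obstruction argument. The starting point is the explicit formula
\[
\dim_{\mathrm A}^\theta F \ = \ \limsup_{R\to 0}\ \sup_{x\in F}\ \frac{\log N\bigl(B(x,R)\cap F, R^{1/\theta}\bigr)}{(1-1/\theta)\log R},
\]
which we first rewrite using a discrete exponential sequence of scales $R=2^{-k}$ (permissible by the remarks in Section 2) and using the covering-number variant of $N(\cdot,r)$ given by counting dyadic mesh cubes of side $2^{-\lceil k/\theta\rceil}$ meeting the set; this makes the inner quantity depend on $F$ in a way that is amenable to continuity arguments.

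\textbf{Key steps.}
First I would define, for each $k\in\mathbb N$, the function
\[
g_k(F,\theta)\ =\ \sup_{x\in F}\ \frac{\log N\bigl(B(x,2^{-k})\cap F, 2^{-k/\theta}\bigr)}{(1-1/\theta)\log 2^{-k}},
\]
and argue that $g_k$ is continuous (or at least Baire $1$) on $\mathcal K(X)\times(0,1)$: the map $F\mapsto B(x,2^{-k})\cap F$ behaves well under the Hausdorff metric away from the critical radius, the supremum over $x\in F$ of a quantity that is itself a max over finitely many mesh cubes is essentially locally constant in $F$, and the dependence on $\theta$ is continuous through the exponents $1/\theta$ and $k/\theta$ (with care taken at the integer jumps of $\lceil k/\theta\rceil$, which only force $g_k$ down to Baire $1$ rather than continuous). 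Then set $h_m(F,\theta)=\sup_{k\ge m} g_k(F,\theta)$; a supremum of countably many Baire $1$ functions is Baire $2$ (indeed upper semicontinuous limits keep us in class $2$), and $\Delta_{\mathrm A}(F,\theta)=\lim_{m\to\infty}h_m(F,\theta)=\inf_m h_m(F,\theta)$ is then a pointwise (decreasing) limit of Baire $2$ functions, hence Baire $2$. One has to double-check that the $\sup$ and $\limsup$ can be arranged so each intermediate stage lands in a finite Baire level — the standard trick is that a $\limsup$ of continuous functions is Baire $2$, and composing with the continuous projections keeps everything measurable.

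\textbf{Ruling out Baire $1$.}
For the final clause I would use the characterisation that a Baire $1$ function on a complete metric space has a dense $G_\delta$ set of continuity points, and exhibit a point $(F_0,\theta_0)$ at which $\Delta_{\mathrm A}$ is discontinuous in a way that persists on a residual set — or, more directly, construct a sequence $F_n\to F_0$ in $\mathcal K(X)$ with $\dim_{\mathrm A}^{\theta_0}F_n$ failing to converge to $\dim_{\mathrm A}^{\theta_0}F_0$ at every point of some nonmeager set. Concretely, take $F_0$ to be a point (or a convergent sequence) with spectrum $0$ but approximate it by sets $F_n$ containing an increasingly dense finite piece of an interval scaled down, so that $\dim_{\mathrm A}^{\theta_0}F_n$ is forced up toward $1$ while $F_n\to F_0$; since such approximating configurations can be inserted near \emph{every} compact set, the set of discontinuity points is dense, and being dense it cannot be contained in the meager complement of the continuity set of a Baire $1$ function.

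\textbf{Main obstacle.}
I expect the delicate part to be Step one: verifying that $g_k$ (and hence the finite-stage functions) genuinely lie in a \emph{finite} Baire class with good semicontinuity, because the covering number $N(B(x,R)\cap F,r)$ is only upper semicontinuous, not continuous, in $F$ (points of the set can escape or merge as $F$ varies), and the supremum over $x\in F$ together with the $\limsup$ over scales each cost a Baire level. Keeping careful track of exactly how many levels are consumed — and arranging the order of the $\sup_x$, $\limsup_R$, and limit in $m$ so the total is exactly $2$ and not $3$ — is the crux; the explicit formula for the spectrum (unavailable for the Assouad dimension itself) is precisely what makes this bookkeeping succeed, which is also why the lower dimension case, handled via $\dim_{\mathrm{ML}}$ and subsets, needs the separate argument of Theorem \ref{meas3}.
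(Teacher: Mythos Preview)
Your outline has the right architecture (express the spectrum via a $\limsup$ of simpler scale-$k$ functions, then count Baire levels), but the bookkeeping does not close at level~2 as written, and the missing idea is exactly the one the paper exploits.

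The difficulty you flag in your ``main obstacle'' is fatal as stated. Your $g_k$, built from covering numbers $N(B(x,R)\cap F,r)$, is only \emph{upper} semicontinuous in $(F,\theta)$ (this is the content of the paper's Lemma~\ref{semicontinuity}(1)), never continuous: a set $F$ can Hausdorff-converge to a limit while points merge at scale $r$, causing $N$ to drop. Upper semicontinuity of $g_k$ handles the super-level sets $\{\Delta_{\mathrm A}<b\}$ correctly, but for $\{\Delta_{\mathrm A}>a\}$ you need $\{g_k>c\}$ to be open, which requires \emph{lower} semicontinuity. With $g_k$ only Baire~1, your chain $g_k\mapsto h_m=\sup_{k\ge m}g_k\mapsto \inf_m h_m$ consumes two levels, landing in Baire~3, not~2; the sentence ``a pointwise (decreasing) limit of Baire~2 functions, hence Baire~2'' is simply wrong (a pointwise limit of Baire~$n$ is Baire~$n{+}1$).

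The paper's resolution is to abandon the hope of a single semicontinuous $g_k$ and instead use \emph{two} different counting functions with opposite semicontinuity: packing numbers $\mathcal{M}_R^{\sup}$ (lower semicontinuous, so $\{\mathcal{M}>c\}$ is open) for the $\{\Delta_{\mathrm A}>a\}$ side, and covering numbers $\mathcal{N}_R^{\sup}$ (upper semicontinuous, so $\{\mathcal{N}<c\}$ is open) for the $\{\Delta_{\mathrm A}<b\}$ side. With genuinely open building blocks on both sides, the decomposition of $\Delta_{\mathrm A}^{-1}((a,b))$ is visibly $\mathcal{G}_{\delta\sigma}$ and Baire~2 follows from the standard characterisation. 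Your proposal never introduces packing numbers, and without them (or some other l.s.c.\ surrogate) there is no way to make the lower-bound side land in the right Borel class.

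For the ``not Baire~1'' clause your instinct is right (dense set of discontinuities contradicts the Baire~1 continuity-points theorem), but your specific construction is backwards: a ``finite piece of an interval'' is a finite set and has Assouad spectrum identically~0, so it cannot force the spectrum \emph{up}. The paper approximates an arbitrary $F$ from below by finite sets (spectrum~0) and from above by its closed $\varepsilon$-neighbourhoods (spectrum equal to the ambient dimension, e.g.\ 1 in $[0,1]$), giving discontinuity at every $(F,\theta)$.
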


\begin{thm} \label{meas2}
The function $\Delta_\text{\emph{L}}: \mathcal{K}(X) \times (0,1) \to \mathbb{R}$ defined by
\[
\Delta_\text{\emph{L}}(F, \theta) = \dim_\text{\emph{L}}^\theta F
\]
is of Baire class 2 and, in particular, Borel measurable. Moreover, it is not in general of Baire class 1.
\end{thm}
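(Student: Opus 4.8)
The plan is to establish Theorem \ref{meas2} (and analogously Theorem \ref{meas1}) by exhibiting $\Delta_{\text{L}}$ as a pointwise limit of a sequence of Baire class $1$ functions, and then separately producing an explicit family of compact sets showing that $\Delta_{\text{L}}$ is not Baire class $1$. For the positive part, I would use the explicit formula
\[
\dim_{\text{L}}^\theta F = \liminf_{R\to 0}\ \inf_{x\in F}\ \frac{\log N\bigl(B(x,R)\cap F, R^{1/\theta}\bigr)}{(1-1/\theta)\log R},
\]
and, as noted after the preliminaries, replace the scales $R$ by the exponential sequence $R=2^{-k}$ and replace $N(\cdot,r)$ by the packing-type or mesh-cube counting function, which behaves better under the Hausdorff metric. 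Set, for each $k$,
\[
g_k(F,\theta) = \inf_{x\in F}\ \frac{\log N_k\bigl(B(x,2^{-k})\cap F, 2^{-k/\theta}\bigr)}{(1-1/\theta)\,(-k\log 2)},
\]
where $N_k$ is a suitable mesh-cube count. The key claim is that each $g_k$ (or a mild modification — one may need to average over a window of scales $j\ge k$, i.e. take $\inf_{j\ge k}$ of finitely-many-at-a-time building blocks, combined with a $\liminf$) is continuous, or at least Baire class $1$, on $\mathcal{K}(X)\times(0,1)$, and that $\Delta_{\text{L}} = \liminf_k g_k = \lim_m \bigl(\inf_{k\ge m} g_k\bigr)$ exhibits $\Delta_{\text{L}}$ as a decreasing-in-$m$ limit of functions that are themselves countable infima of the $g_k$. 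Since a countable infimum of continuous functions is upper semicontinuous hence Baire class $1$, and a pointwise limit of Baire class $1$ functions is Baire class $2$, this gives the result. The technical heart is therefore a semicontinuity/continuity analysis of the cube-counting function $F\mapsto N_k(B(x,2^{-k})\cap F, 2^{-k/\theta})$ jointly in $F$ (Hausdorff metric), $x$, and $\theta$: one must show that small perturbations in the Hausdorff metric change the count in a controlled, one-sided way, and that the $\inf_{x\in F}$ over the compact set $F$ is attained and varies semicontinuously. This is exactly where the flexibility in choosing $N(\cdot,r)$ (mesh cubes rather than balls) is used, since mesh counts only jump at a discrete set of configurations and the jumps are one-directional under the relevant perturbations.

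I expect the main obstacle to be precisely this joint semicontinuity bookkeeping: $\theta$ enters both in the exponent of the small scale $R^{1/\theta}$ and in the normalising factor $(1-1/\theta)$, so continuity in $\theta$ of the building blocks is delicate near non-generic values of $R^{1/\theta}$ relative to the mesh, and the interaction of $\inf_{x\in F}$ with the Hausdorff convergence $F_n\to F$ must be handled carefully (a cube counted for $F$ near $x$ may fail to be counted for $F_n$, which is why one gets only semicontinuity, not continuity, and hence only Baire $1$ for the building blocks rather than Baire $0$). A clean way to organise this is to fix a countable dense set of $\theta$'s and a countable dense set of compact sets, use continuity of $\theta\mapsto\dim_{\text{L}}^\theta F$ from Theorem \ref{Con2} to reduce to rational $\theta$, and then only argue measurability in the $F$ variable for each fixed rational $\theta$.

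For the sharpness statement — that $\Delta_{\text{L}}$ is not in general Baire class $1$ — I would invoke the standard obstruction: a Baire class $1$ function on a complete metric space has a comeagre set of continuity points (the Baire characterisation theorem, see \cite{kechris}). So it suffices to construct a single $\theta_0\in(0,1)$ and a sequence $F_n\to F$ in $\mathcal{K}(X)$ (with $X=[0,1]$, say) such that $\dim_{\text{L}}^{\theta_0} F_n$ does not converge to $\dim_{\text{L}}^{\theta_0} F$, arranged so that the discontinuity set of $\theta\mapsto\Delta_{\text{L}}(\cdot,\theta_0)$ is not meagre — in fact it is cleanest to build the example so that every compact set in some Hausdorff-open region is a limit of sets on which $\Delta_{\text{L}}$ takes a value bounded away from its value on the limit. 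Concretely, one takes the limit set to be a full interval (lower spectrum $1$) and approximates it by finite sets or by sets with a single well-placed isolated point at a controlled scale (lower spectrum $0$, since an isolated point kills the lower dimension and, at the right scale, the lower spectrum too); doing this densely in $\mathcal{K}([0,1])$ — e.g. perturbing an arbitrary compact set by adjoining such a gapped point — shows the continuity points cannot be comeagre. The analogous construction for $\Delta_{\text{A}}$ in Theorem \ref{meas1} is the dual one (a dense sequence of finite sets, each with Assouad spectrum $0$, converging to a set with positive Assouad spectrum, as with $\mathbb{Q}\cap[0,1]$), and the positive (Baire $2$) direction for $\Delta_{\text{A}}$ proceeds identically with $\liminf$, $\inf_x$ replaced by $\limsup$, $\sup_x$ and upper semicontinuity replaced by lower semicontinuity throughout.
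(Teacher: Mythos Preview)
Your proposal is essentially correct and close in spirit to the paper's proof, but the organisation differs and there is one point where you need to be more careful.

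The paper does not write $\Delta_{\text{L}}$ as a $\liminf$ of building blocks and then climb the Baire hierarchy via pointwise limits. Instead it shows directly that $\Delta_{\text{L}}^{-1}((a,b))$ is $\mathcal{G}_{\delta\sigma}$ by decomposing it into countable unions and intersections of preimages under four auxiliary functions $\mathcal{N}_R^{\inf}$, $\mathcal{M}_R^{\inf}$, etc., whose joint semicontinuity in $(F,\theta)$ is established in a separate lemma. The key technical move there is to use \emph{two} counting functions with opposite semicontinuity: an open-cover count $N(\overline{B}(x,R)\cap F,\cdot)$, for which $\inf_{x\in F}N$ is upper semicontinuous, and a closed-packing count $M(B^0(x,R)\cap F,\cdot)$, for which $\inf_{x\in F}M$ is lower semicontinuous. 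The careful open/closed conventions are exactly what makes each direction of the preimage land in the right Borel class.

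Your single-function $\liminf$ approach can be made to work, but only if you pin down the semicontinuity direction. You write ``a countable infimum of continuous functions is upper semicontinuous'', but then later concede the $g_k$ are only semicontinuous. If the $g_k$ are merely Baire~1 (say lower semicontinuous), then $\inf_{k\ge m} g_k$ need not be Baire~1, and you would only get Baire~3. You must arrange that the $g_k$ are specifically \emph{upper} semicontinuous (which they are, with the paper's open-cover/closed-ball conventions), so that the countable infimum stays upper semicontinuous and the $\liminf = \sup_m \inf_{k\ge m}$ is a monotone limit of Baire~1 functions. Your suggested fallback of reducing to rational $\theta$ via Theorem~\ref{Con2} does not help: as the paper itself remarks, separate measurability in the fibres (Baire~2 in $F$, Baire~0 in $\theta$) does not yield the joint Baire class.

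Your argument for ``not Baire~1'' is the same as the paper's: Baire~1 functions have a comeagre set of continuity points, but $\Delta_{\text{L}}$ is discontinuous everywhere since any $F$ is approximated both by finite sets (lower spectrum $0$) and by closed $\varepsilon$-neighbourhoods (lower spectrum equal to the ambient dimension).
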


We also answer \cite[Question 4.6]{Fraser} in the affirmative:

\begin{thm} \label{meas3}
The function $\Delta'_\text{\emph{L}}: \mathcal{K}(X) \to \mathbb{R}$ defined by
\[
\Delta'_\text{\emph{L}}(F) = \dim_\text{\emph{L}} F
\]
is of Baire class 2.
\end{thm}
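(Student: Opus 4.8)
The plan is to exhibit $\Delta_{\mathrm L}' = \dim_{\mathrm L}$ as a pointwise limit of Baire 1 functions on $\mathcal K(X)$, which gives Baire class $2$ directly; the sharpness (that it is not Baire 1) should follow from a direct construction of a compact set $F$ together with a sequence $F_n \to F$ in $d_{\mathcal H}$ with $\dim_{\mathrm L} F_n \not\to \dim_{\mathrm L} F$, exploiting the notorious failure of the lower dimension to be monotone or even lower semicontinuous (the presence of a single isolated point collapsing the dimension to $0$ is the standard mechanism). I will concentrate on the positive part, since that is where the real work lies.

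First I would fix a convenient combinatorial proxy for the covering number: for $F \in \mathcal K(X)$, an integer $k$, and a ratio parameter, set
\[
N_k(F) \;=\; \min_{x \in F}\, N\!\big(B(x,2^{-k})\cap F,\; 2^{-n}\big)
\]
over suitable scale pairs $2^{-n}<2^{-k}$, using the fact noted in the preliminaries that exponential sequences of scales and the packing/mesh variants of $N(\cdot,r)$ all give the same dimension. From the definition of the lower dimension one has the explicit formula
\[
\dim_{\mathrm L} F \;=\; \lim_{R\to 0}\;\inf_{0<r<R}\;\inf_{x\in F}\;\frac{\log N\big(B(x,R)\cap F,r\big)}{\log(R/r)},
\]
interpreted with the appropriate liminf, so that $\dim_{\mathrm L} F$ is an infimum/liminf over countably many scale pairs of a quantity that, for \emph{fixed} pair of scales $R=2^{-k}$, $r=2^{-n}$, is of the form $F\mapsto \inf_{x\in F} N(B(x,R)\cap F,r)$ up to the fixed positive normalising constant $\log(R/r)$.

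The key step is therefore to show that for each fixed pair $(k,n)$ the map $F \mapsto \inf_{x\in F} N(B(x,2^{-k})\cap F, 2^{-n})$ is Baire class 1 on $(\mathcal K(X), d_{\mathcal H})$. Here I would use the strategy already employed in \cite{Fraser} and in \cite{mattilamauldin}: such a localised-minimum covering functional is upper semicontinuous or lower semicontinuous (one of the two) with respect to $d_{\mathcal H}$ once one replaces the covering number by a slightly fattened or slightly shrunk version, and semicontinuous functions are Baire 1; alternatively one exhibits it as a pointwise limit along $\varepsilon\downarrow 0$ of genuinely continuous functionals obtained by counting $r$-packings with a strict separation margin. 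Then $\dim_{\mathrm L} F$ is obtained from these Baire 1 building blocks by first taking a countable infimum over $n$ (for fixed $k$) — a countable infimum of Baire 1 functions is Baire 2 — and then a liminf over $k$, which does not raise the Baire class beyond 2 since liminf of Baire 2 functions is Baire 2. Assembling: $\dim_{\mathrm L} = \liminf_k \inf_n (\text{Baire 1})$ is Baire 2.

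The main obstacle I anticipate is the first reduction: verifying that the \emph{doubly localised} functional $F\mapsto \inf_{x\in F}N(B(x,R)\cap F,r)$ really is Baire 1 and not merely Baire 2. The difficulty is that small Hausdorff perturbations of $F$ can both move the minimising centre $x$ and change which $r$-balls are needed, and—unlike for the Assouad dimension, where one takes a \emph{supremum} over $x$ and gets clean upper semicontinuity—taking the infimum over $x$ interacts badly with $d_{\mathcal H}$-continuity (this is exactly why \cite{Fraser} only obtained Baire 3 by the naive route). I expect the fix is to interpose, as in \cite{mattilamauldin}, an auxiliary family of functionals $F\mapsto \inf_{x\in F} M_\delta(B(x,R)\cap F, r)$ where $M_\delta$ counts $\delta$-separated $r$-packings whose centres lie strictly inside $B(x,R)$; for each $\delta>0$ this is continuous in $F$ (a careful $\varepsilon$-$\delta$ argument using that $d_{\mathcal H}(E,F)$ small forces each point of one set near a point of the other), and letting $\delta\downarrow 0$ through a sequence recovers the original functional as a pointwise limit, hence Baire 1. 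Getting this continuity claim and the limit identity exactly right—particularly handling the boundary of the ball $B(x,R)$ and the possibility of points escaping to or arriving at the boundary under perturbation—is the delicate part of the argument; everything downstream is the routine stability of the Baire hierarchy under countable inf and liminf.
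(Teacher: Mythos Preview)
There is a genuine gap in your assembly step. You propose to show each localised functional $\phi_{r,R}(F)=\inf_{x\in F}N(B(x,R)\cap F,r)/\log(R/r)$ is Baire~1 and then obtain $\dim_{\mathrm L}$ as $\sup_k \inf_{(r,R):R\le 2^{-k}}\phi_{r,R}$. But a countable infimum of Baire~1 functions is only Baire~2 in general, and a countable supremum of Baire~2 functions is only Baire~3. Your claim that ``liminf of Baire~2 functions is Baire~2'' is not correct without further structure; indeed this is precisely why \cite{Fraser} only obtained Baire~3.

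The paper's route is sharper than ``Baire~1 building blocks'': it proves that the packing functional $\mathcal{M}^{\inf}_{r,R}(F)=\inf_{x\in F}M(B^0(x,R)\cap F,r)$ is \emph{lower semicontinuous} and the covering functional $\mathcal{N}^{\inf}_{r,R}(F)=\inf_{x\in F}N(\overline{B}(x,R)\cap F,r)$ is \emph{upper semicontinuous} (Lemma~\ref{semicontinuity}(5),(6)). The crucial new observation, and the whole point of the improvement over \cite{Fraser}, is (6): lower semicontinuity of $\mathcal{M}^{\inf}_{r,R}$, which holds provided one uses an \emph{open} ball $B^0(x,R)$ for the centre. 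With semicontinuity (not mere Baire~1) in hand, one writes $(\Delta'_{\mathrm L})^{-1}(a,b)$ directly as a $\mathcal{G}_{\delta\sigma}$ set: the ``$>a$'' half uses lower semicontinuity of $\mathcal{M}^{\inf}_{r,R}$ so that preimages of upper rays are open, and the ``$<b$'' half uses upper semicontinuity of $\mathcal{N}^{\inf}_{r,R}$. The quantifier pattern then gives $\mathcal{G}_{\delta\sigma}$ for each half, hence Baire~2.

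Your auxiliary $M_\delta$ idea is actually not far from this: if your $M_\delta$ functionals are genuinely continuous and monotone in $\delta$, their limit is semicontinuous, not merely Baire~1. But you do not exploit this---you discard the semicontinuity and try to push through with the weaker Baire~1 conclusion, at which point the nested sup/inf structure costs you a class. The fix is to keep the semicontinuity and do the $\mathcal{G}_{\delta\sigma}$ preimage argument directly, using the packing version on one side and the covering version on the other.
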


We will prove Theorems \ref{meas1},  \ref{meas2} and  \ref{meas3} in Section \ref{BorelProof}.  Finally we remark that, by using similar techniques, one may also prove that for a fixed $\theta \in (0,1)$ the maps $F \mapsto \dim_\mathrm{A}^\theta F$ and $F \mapsto \dim_\mathrm{L}^\theta F$ are  Baire 2 (and not Baire 1), but we omit the details. 

Recall that for a fixed $F \in \mathcal{K}(X)$ the maps $\theta  \mapsto \dim_\mathrm{A}^\theta F$ and $\theta \mapsto \dim_\mathrm{L}^\theta F$ are continuous (Baire 0), see Corollary \ref{Con} and Theorem \ref{Con2}.  We are unaware of any \emph{a priori} way of relating Baire classes of a function on a product space which is separately measureable to the Baire classes of its fibre maps (in this case 2 and 0). A classical result of Lebesgue shows that if a function on a product space is separately continuous, then it is no worse than Baire 1, see \cite{Lebesgue, Rudin}.

\subsection{Semicontinuity of covering and packing functions}

We first establish some technical semicontinuity properties, from which the desired Baire classifications will swiftly follow in the subsequent section.  For clarity we write $\overline{B}(x,R)$ for the \emph{closed} ball centered at $x \in X$ with radius $R>0$ and $B^0(x,R)$ for the \emph{open} ball centered at $x \in X$ with radius $R>0$.  Also, $N(F,r)$ will denote the smallest number of \emph{open} sets required for an $r$-cover of $F \subseteq X$ and $M(F,r)$ will denote the maximum number of \emph{closed} balls in an $r$-packing of $F\subseteq X$, where an $r$-packing of $F$ is a pairwise disjoint collection of closed balls centered in $F$ of radius $r$.  For convenience we assume that $N(\emptyset, r) = M(\emptyset, r) = 0$ for any $r>0$.

We will establish semicontinuity of six related covering or packing functions, which we define now.  For fixed $R \in (0,1)$ define four functions $\mathcal{N}_R^{\sup}$, $\mathcal{N}_R^{\inf}$, $\mathcal{M}_R^{\sup}$, $\mathcal{M}_R^{\sup}: \mathcal{K}(X) \times (0,1) \to \mathbb{R}$ by
\[
\mathcal{N}_R^{\sup}(F, \theta) = \sup_{x \in F} N( \overline{B}(x,R) \cap F, \, R^{1/\theta} )
\]
\[
\mathcal{N}_R^{\inf}(F, \theta) = \inf_{x \in F} N( \overline{B}(x,R) \cap F, \, R^{1/\theta} )
\]
\[
\mathcal{M}_R^{\sup}(F, \theta) = \sup_{x \in F} M( B^0(x,R) \cap F, \, R^{1/\theta} )
\]
and
\[
\mathcal{M}_R^{\inf}(F, \theta) = \inf_{x \in F} M( B^0(x,R) \cap F, \, R^{1/\theta} ).
\]
Also, for fixed $R \in (0,1)$ and $r \in (0,R)$ define $\mathcal{N}_{r,R}^{\inf}$, $\mathcal{M}_{r,R}^{\inf} : \mathcal{K}(X)  \to \mathbb{R}$ by
\[
\mathcal{N}_{r,R}^{\inf}(F) = \inf_{x \in F} N( \overline{B}(x,R) \cap F, \, r )
\]
and
\[
\mathcal{M}_{r,R}^{\inf}(F) = \inf_{x \in F} M( B^0(x,R) \cap F, \, r ).
\]
It is useful to note that all of the infimums and supremums in the above definitions are actually minimums and maximums, respectively.  This is because the packing and covering numbers only take positive integer values.

\begin{lma} \label{semicontinuity}
For fixed $R \in (0,1)$ and $r \in (0,R)$ the following semicontinuity properties hold:
\begin{enumerate}
\item $\mathcal{N}_R^{\sup}$ is upper semicontinuous
\item $\mathcal{N}_R^{\inf}$ is upper semicontinuous
\item $\mathcal{M}_R^{\sup}$ is lower semicontinuous
\item $\mathcal{M}_R^{\inf}$ is lower semicontinuous
\item $\mathcal{N}_{r,R}^{\inf}$ is upper semicontinuous
\item $\mathcal{M}_{r,R}^{\inf}$ is lower semicontinuous
\end{enumerate}
\end{lma}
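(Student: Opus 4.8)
The plan is to prove all six semicontinuity statements by a direct $\varepsilon$-neighbourhood argument: we exploit the fact that the Hausdorff metric controls how much a compact set can move, and that all the relevant covering and packing numbers are integer-valued, so that ``semicontinuity'' really means ``locally constant from one side'' in a neighbourhood of each fixed $F$. Throughout, $R$ and $r$ (and, in parts (1)--(4), the fact that $\theta$ ranges in $(0,1)$ so that $R^{1/\theta}$ varies continuously) are the fixed data; the variable point is $(F,\theta)$ or $F$ in $\mathcal{K}(X)$.

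The core observations I would set up first are two monotonicity-type facts. First, for covering numbers: if $E \subseteq E'$ and every point of $E'$ lies within $\eta$ of $E$, then an $r$-cover of $E$ by open sets, fattened to an $(r+2\eta)$-cover, covers $E'$; more usefully, $N(E, r)$ is upper semicontinuous in $E$ with respect to $d_{\mathcal{H}}$ because a finite open $r$-cover of $E$ is still an open cover of every set sufficiently Hausdorff-close to $E$ (open sets that cover a compact set cover a neighbourhood of it). Second, for packing numbers: if $E \subseteq E'$ then $M(E,r) \le M(E',r)$, and moreover if $E'$ is Hausdorff-close to $E$ then a \emph{strict} (open-ball) $r$-packing centred in $E$ can be perturbed — each centre moved slightly into $E'$ — while remaining pairwise disjoint, because open balls at a finite family of points with pairwise distances bounded below stay disjoint under small perturbation; hence $M(B^0(x,R)\cap F, r)$-type quantities are lower semicontinuous. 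The use of \emph{open} balls in the $\mathcal{M}$ definitions and \emph{closed} balls in the $\mathcal{N}$ definitions is exactly what makes these perturbation arguments go through, and I would emphasise that point.

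With those two primitives, each of the six parts follows by combining them with the sup/inf over $x\in F$. For (1), given $(F,\theta)$ and a near-maximiser $x_0 \in F$ realising $\mathcal{N}_R^{\sup}(F,\theta)$ up to (in fact exactly, since integer-valued) its value, I take the corresponding finite open $R^{1/\theta}$-cover of $\overline{B}(x_0,R)\cap F$; for $(F',\theta')$ close to $(F,\theta)$ there is $x_0' \in F'$ near $x_0$, and the slightly fattened cover (using that $R^{1/\theta'} \approx R^{1/\theta}$ and $d_{\mathcal H}(F,F')$ small) covers $\overline{B}(x_0',R)\cap F'$, giving $\mathcal{N}_R^{\sup}(F',\theta') \le \mathcal{N}_R^{\sup}(F,\theta)$ locally — here one must be slightly careful that $\overline{B}(x_0',R)\cap F'$ is contained in a small neighbourhood of $\overline{B}(x_0,R)\cap F$, which holds because $F'$ is in an $\eta$-neighbourhood of $F$ and we may enlarge $R$ to $R+\eta$ harmlessly in the covering estimate (integer-valued, so the bound is stable). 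Part (2) is the same with $\inf_x$ in place of $\sup_x$: for \emph{every} $x'\in F'$ there is a nearby $x\in F$, and the cover of $\overline{B}(x,R)\cap F$ of size $\le \mathcal{N}_R^{\inf}(F,\theta)$ fattens to cover $\overline{B}(x',R)\cap F'$, so the infimum over $x'$ is $\le \mathcal{N}_R^{\inf}(F,\theta)$. Parts (3) and (4) are dual: take a near-optimal open-ball packing, perturb its centres into $F'$, and note the disjointness survives, while the ambient constraint ``centre in $B^0(x,R)$'' survives because $B^0$ is open; for (4) one additionally arranges the witnessing base point to move, exactly as in (2). Parts (5) and (6) are the $\theta$-free specialisations of (2) and (4) and need no new idea.

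The main obstacle I anticipate is purely bookkeeping: reconciling the ``$\cap F$'' appearing both as the set being covered/packed \emph{and} (via $B(x,R)\cap F$) as the localising window, when $F$ itself is the perturbed object. One must check that $\overline{B}(x',R)\cap F' \subseteq \big(\overline{B}(x,R)\cap F\big)_{\eta}$ for suitable $\eta\to 0$, and conversely that a packing sitting inside $B^0(x,R)\cap F$ can be nudged to sit inside $B^0(x',R)\cap F'$. Both are handled by allowing the radius to wobble by $\eta = d_{\mathcal H}(F,F')$ and invoking once more that all quantities are integers, so a wobble that changes the radius by $o(1)$ cannot change the count once $\eta$ is small enough relative to the (finitely many, hence bounded-below) inter-point gaps in a fixed near-optimal configuration. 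I would also note that the continuity of $\theta \mapsto R^{1/\theta}$ on $(0,1)$ is what lets parts (1)--(4) be semicontinuous jointly in $(F,\theta)$ rather than merely in $F$ for fixed $\theta$.
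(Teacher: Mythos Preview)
Your overall strategy—perturbing covers and packings under small Hausdorff displacement—is the right idea, and for (3), (5), (6) your sketch is essentially what the paper does. However, there is a genuine logical error in your treatment of parts (1) and (2): you have the roles of ``sup'' and ``inf'' reversed. In (1) you fix a maximiser $x_0\in F$, transport its cover to a nearby $x_0'\in F'$, and conclude $\mathcal{N}_R^{\sup}(F',\theta')\le\mathcal{N}_R^{\sup}(F,\theta)$. But $\mathcal{N}_R^{\sup}(F',\theta')$ is a supremum over \emph{all} $x'\in F'$; bounding the covering number at one particular $x_0'$ says nothing about the supremum. The correct direct argument for (1) must start from an arbitrary $x'\in F'$, find a nearby $x\in F$, and use that the covering number at $x$ is \emph{at most} the supremum $k$. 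Conversely, in (2) you write ``for every $x'\in F'$ there is a nearby $x\in F$, and the cover of $\overline{B}(x,R)\cap F$ of size $\le\mathcal{N}_R^{\inf}(F,\theta)$\ldots''—but a cover of size $\le\mathcal{N}_R^{\inf}(F,\theta)$ is only available at a \emph{minimiser} $x_0\in F$, not at an arbitrary $x$; and since you are bounding an infimum from above, you only need to produce \emph{one} good $x'\in F'$ (near $x_0$), not handle all of them. In short, your arguments for (1) and (2) should be swapped.

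There is also a subtler issue you gloss over with ``integer-valued, so the bound is stable''. When $\theta'<\theta$ you have $R^{1/\theta'}<R^{1/\theta}$, so a fattened cover at scale $R^{1/\theta}$ is \emph{not} automatically a cover at scale $R^{1/\theta'}$; one must instead \emph{shrink} the open covering sets, using that an open cover of a compact set covers a neighbourhood of it. The paper handles both this and the uniformity-over-$x'$ issue in (1) more cleanly by arguing sequentially: it shows the super-level set $\{\mathcal{N}_R^{\sup}\ge t\}$ is closed by taking a convergent sequence $(F_i,\theta_i)$, extracting a Hausdorff-convergent subsequence of the sets $\overline{B}(x_i,R)\cap F_i$, and observing that any open $R^{1/\theta}$-cover of the limit eventually covers the approximants (with room to shrink to scale $R^{1/\theta_i}$). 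Your direct approach can be made to work for (1) once the quantifier swap is fixed, but you would still need a compactness argument to make the neighbourhood uniform in $x'$.
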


\begin{proof}
Fix $R \in (0,1)$ and $r \in (0,R)$.
\begin{enumerate}
\item  To prove that $\mathcal{N}_R^{\sup}$ is upper semicontinuous, it suffices to show that for any $t \in \mathbb{R}$, the set
\begin{equation} \label{closedset1}
\left\{ (F, \theta) \in \mathcal{K}(X) \times (0,1) : \mathcal{N}_R^{\sup}(F, \theta) \geq t\right\}
\end{equation}
is closed. Fix $t \in \mathbb{R}$ and let $(F_i, \theta_i)$ be a convergent sequence of pairs from the above set, with limit $(F, \theta) \in \mathcal{K}(X) \times (0,1)$. Thus for each $i$ we may find $x_i \in F_i$ such that
\[
N( \overline{B}(x_i,R) \cap F_i, \, R^{1/\theta_i} ) \geq t.
\]
Take a subsequence of the sets $\overline{B}(x_i,R) \cap F_i$ which converges in the Hausdorff metric, which we may do by compactness.  Denote the Hausdorff limit of this sequence of sets by $Y$ and note that $Y \subseteq  \overline{B}(x,R) \cap F$. Let $\{U_i\}$ be any finite open $R^{1/\theta}$-cover of $\overline{B}(x,R) \cap F$.   We may choose $i$ large enough to guarantee that $\{U_i\}$ is an open $R^{1/\theta}$-cover of $\overline{B}(x_i,R) \cap F_i$ and, moreover, we may simultaneously choose $i$ large enough such that we may find an open $R^{1/\theta_i}$-cover of $\overline{B}(x_i,R) \cap F_i$ by the same number of sets. If $\theta_i\geq\theta$ then $\{U_i\}$ suffices and if $\theta_i<\theta$ then  this can be achieved by shrinking the sets $\{U_i\}$ slightly, using the fact that this is an open cover of a closed set and so there is room to do so.  This guarantees
\[
N( \overline{B}(x,R) \cap F, \, R^{1/\theta} ) \geq t
\]
and thus the limit $(F, \theta)$ belongs to (\ref{closedset1}), proving that it is closed.
\item To prove that  $\mathcal{N}_R^{\inf}$ is upper semicontinuous, it suffices to show that for any $t \in \mathbb{R}$, the set
\begin{equation} \label{openset1}
\left\{ (F, \theta) \in \mathcal{K}(X) \times (0,1) : \mathcal{N}_R^{\inf}(F, \theta) < t\right\}
\end{equation}
is open. Fix $t \in \mathbb{R}$ and let $(F, \theta)$ be a pair from the above set.  Thus we may find $x \in F$ such that
\[
N( \overline{B}(x,R) \cap F, \, R^{1/\theta} ) < t.
\]
Let $\{U_i\}$ be an open $R^{1/\theta}$-cover of $\overline{B}(x,R) \cap F$ by $N( \overline{B}(x,R) \cap F, \, R^{1/\theta} ) $ sets.  If $F'$ is sufficiently close to $F$ in the Hausdorff metric, then we may find $x' \in F'$ such that $\{U_i\}$ is also an open $R^{1/\theta}$-cover of $\overline{B}(x',R) \cap F'$.  Moreover, if $\theta'$ is sufficiently close to $\theta$, then we may distort $\{U_i\}$ to obtain an open $R^{1/\theta'}$-cover of $\overline{B}(x',R) \cap F'$ by the same number of sets.  This guarantees that if $(F',\theta')$ is sufficiently close to $(F, \theta)$ in the product metric, then
\[
\mathcal{N}_R^{\inf}(F', \theta') \leq N( \overline{B}(x,R) \cap F, \, R^{1/\theta} ) < t
\]
which proves that (\ref{openset1}) is open.
\item To prove that $\mathcal{M}_R^{\sup}$ is lower semicontinuous, it suffices to show that for any $t \in \mathbb{R}$, the set
\begin{equation} \label{openset2}
\left\{ (F, \theta) \in \mathcal{K}(X) \times (0,1) : \mathcal{M}_R^{\sup}(F, \theta) > t\right\}
\end{equation}
is open. Fix $t \in \mathbb{R}$ and let $(F, \theta)$ be a  pair from the above set.  Thus we may find $x \in F$ such that
\[
M( B^0(x,R) \cap F, \, R^{1/\theta} ) > t.
\]
Let $\{C_i\}$ be an $R^{1/\theta}$-packing of $B^0(x,R) \cap F$ by $M( B^0(x,R) \cap F, \, R^{1/\theta} ) $  closed balls.  If $F'$ is sufficiently close to $F$ in the Hausdorff metric, then we may find $x' \in F'$ close to $x$ and $\{C'_i\}$ with $C_i'$ close to $C_i$ such that $\{C'_i\}$ is an $R^{1/\theta}$-packing of $B^0(x',R) \cap F'$ by the same number of closed balls.  Moreover, if $\theta'$ is sufficiently close to $\theta$, then we may distort the $\{C_i'\}$ to obtain an $R^{1/\theta'}$-packing of $B^0(x',R) \cap F'$.  This guarantees that if $(F',\theta')$ is sufficiently close to $(F, \theta)$ in the product metric, then
\[
\mathcal{M}_R^{\sup}(F', \theta') \geq  M( B^0(x,R) \cap F, \, R^{1/\theta} ) > t
\]
which proves that (\ref{openset2}) is open.
\item  To prove that $\mathcal{M}_R^{\inf}$ is lower semicontinuous, it suffices to show that for any $t \in \mathbb{R}$, the set
\begin{equation} \label{closedset2}
\left\{ (F, \theta) \in \mathcal{K}(X) \times (0,1) : \mathcal{M}_R^{\inf}(F, \theta) \leq t\right\}
\end{equation}
is closed. Fix $t \in \mathbb{R}$ and let $(F_i, \theta_i)$ be a convergent sequence of pairs from the above set, with limit $(F, \theta) \in \mathcal{K}(X) \times (0,1)$. Thus for each $i$ we may find $x_i \in F_i$ such that
\begin{equation} \label{contradictionbound}
M(B^0(x_i,R) \cap F_i, \, R^{1/\theta_i} ) \leq t.
\end{equation}
Take a convergent subsequence from the sequence $\{x_i\}$, which we may do by compactness.  Denote the limit of this sequence by $x$ and note that $x \in F$.  We claim that
\[
M( B^0(x,R) \cap F, \, R^{1/\theta} ) \leq t
\]
which shows that $(F, \theta)$ belongs to (\ref{closedset2}), proving that it is closed.  It remains to prove the claim, which we do by contradiction.  Assume that $\{C_i\}$ is an $R^{1/\theta}$-packing of $B^0(x,R) \cap F$ by strictly greater than $t$  closed balls.  Following (3) above, if $(F',\theta')$ is sufficiently close to $(F, \theta)$ in the product metric, then we may find $x' \in F'$ close to $x$ and $\{C'_i\}$ with $C_i'$ close to $C_i$ such that $\{C'_i\}$ is an $R^{1/\theta'}$-packing of $B^0(x',R) \cap F'$ by the same number of closed balls. This contradicts (\ref{contradictionbound}) and thus completes the proof.  We note that it was crucial to our argument that $B^0(x,R)$ was an \emph{open} ball.
\item Upper semicontinuity of $\mathcal{N}_{r,R}^{\inf}$ was already  established in \cite[Lemma 5.6]{Fraser}, where it was written as $\Phi_{r,R}$.  The proof is very similar to (in fact slightly simpler than) the proof of (2) above and is omitted.
\item It was hinted at in \cite{Fraser} that $\mathcal{M}_{r,R}^{\inf}$ may not be lower semicontinuous, but we show here that it is.  This is the key to lowering the Baire class of lower dimension from 3 to the sharp value of 2. The proof is very similar to (in fact slightly simpler than) the proof of (4) above and is omitted.
\end{enumerate}  \vspace{-8mm}
\end{proof}

\subsection{Proofs of measureability results}  \label{BorelProof}

To prove that a function from a metric space $A$ to $\mathbb{R}$ is Baire 2, it suffices to show that any open interval in $\mathbb{R}$  pulls back to a $\mathcal{G}_{\delta \sigma}$ set in $A$, see \cite[Theorem 24.3]{kechris}.  Recall that a set is $\mathcal{G}_{\delta \sigma}$ if it can be expressed as a countable union of sets which are themselves expressible as countable intersections of open sets.  As such, throughout this section let $a,b \in \mathbb{R}$ with $a<b$ and we will consider various pullbacks of the interval $(a,b)$.  We write $\mathbb{Q}^+$ to denote the strictly positive rationals.

We begin by considering the Assouad spectrum.   Straight from the definition, we obtain
\begin{eqnarray*}
\Delta_\text{A}^{-1} \big( (a,b) \big) &=& \left\{ (F, \theta) \in \mathcal{K}(X) \times (0,1) :  \dim_\text{A}^\theta F > a \right\}  \\ \\
 &\,& \qquad   \bigcap \  \left\{(F, \theta) \in \mathcal{K}(X) \times (0,1) :  \dim_\text{A}^\theta  F < b \right\} \\ \\
&=&  \Bigg\{(F, \theta) \in \mathcal{K}(X) \times (0,1)  :  (\exists n \in \mathbb{N}) ( \forall C>0)( \forall \rho>0) (\exists  0<R <\rho ) \\ \\
&\,&  \qquad \qquad \qquad  \sup_{x \in F}   M\Big( B^0\big( x, R \big) \cap F , R^{1/\theta} \Big)  \ > \ C \, \bigg( \frac{R}{R^{1/\theta}} \bigg)^{a+1/n} \Bigg\} \\ \\  
 &\,&    \bigcap \  \Bigg\{(F, \theta) \in \mathcal{K}(X) \times (0,1) : (\exists n \in \mathbb{N}) (\exists C>0) (\exists \rho>0)  (\forall  0<R <\rho ) \\ \\
&\,& \qquad \qquad \qquad  \sup_{x \in F} N \Big( \overline{B}\big( x, R \big) \cap F, R^{1/\theta} \Big)  \ < \ C \, \bigg( \frac{R}{R^{1/\theta}} \bigg)^{b-1/n} \Bigg\}  \\ \\
&=& \left(  \bigcup_{n \in \mathbb{N}} \  \bigcap_{C \in \mathbb{Q}^+} \  \bigcap_{\rho \in \mathbb{Q}^+} \  \bigcup_{R \in \mathbb{Q} \cap (0,\rho)} \ \Big(\mathcal{M}^{\sup}_R\Big)^{-1}\,  \Big( \big(  C \, R^{(1-1/\theta)(a+1/n)} , \infty \big) \Big) \right)  \\ \\
&\,&   \bigcap \ \left( \bigcup_{n \in \mathbb{N}} \  \bigcup_{C \in \mathbb{Q}^+} \ \bigcup_{\rho \in \mathbb{Q}^+} \  \bigcap_{R \in \mathbb{Q} \cap (0,\rho)} \  \Big(\mathcal{N}_R^{\sup} \Big)^{-1}\,  \Big( \big( -\infty,  \, C \, R^{(1-1/\theta)(b-1/n)} \big) \Big) \right)
\end{eqnarray*}
which is $\mathcal{G}_{\delta \sigma}$ by the lower semicontinuity of $\mathcal{M}^{\sup}_R$ and the upper semicontinuity of $\mathcal{N}^{\sup}_R$, see Lemma \ref{semicontinuity} parts (3) and (1) respectively.  This completes the proof that $\Delta_A$ is Baire 2.

We now turn to the lower spectrum, which is similar.  By a similar decomposition argument we obtain
\begin{eqnarray*}
 \Delta_\text{L}^{-1} \big( (a,b) \big) &=&\left(  \bigcup_{n \in \mathbb{N}} \  \bigcup_{C \in \mathbb{Q}^+} \  \bigcup_{\rho \in \mathbb{Q}^+} \  \bigcap_{R \in \mathbb{Q} \cap (0,\rho)} \ \Big(\mathcal{M}^{\inf}_R\Big)^{-1}\,  \Big( \big(  C \, R^{(1-1/\theta)(a+1/n)} , \infty \big) \Big) \right)  \\ \\
&\,&    \bigcap \ \left( \bigcup_{n \in \mathbb{N}} \  \bigcap_{C \in \mathbb{Q}^+} \  \bigcap_{\rho \in \mathbb{Q}^+} \  \bigcup_{R \in \mathbb{Q} \cap (0,\rho)} \  \Big(\mathcal{N}_R^{\inf} \Big)^{-1}\,  \Big( \big( -\infty,  \, C \, R^{(1-1/\theta)(b-1/n)} \big) \Big) \right)
\end{eqnarray*}
which is $\mathcal{G}_{\delta \sigma}$ by the lower semicontinuity of $\mathcal{M}^{\inf}_R$ and the upper semicontinuity of $\mathcal{N}^{\inf}_R$, see Lemma \ref{semicontinuity} parts (4) and (2) respectively.  This completes the proof that $\Delta_L$ is Baire 2.

Finally, it is straightforward to see that neither the Assouad spectrum  nor the lower spectrum are typically Baire 1.  This can be seen by recalling that the points of continuity for a Baire 1 function form a dense $\mathcal{G}_\delta$ set, see \cite[Theorem 24.14]{kechris}, but it is evident that the Assouad and lower spectra are discontinuous everywhere (unless $X$ has a particularly simple form).  In particular, every set in $\mathcal{K}(X)$ can be approximated by finite sets (which all have dimension spectra equal to 0) or by their closed $\varepsilon$-neighbourhoods (which typically have dimension spectra larger than 0).  If $X$ is the Euclidean unit interval  $[0,1]$, for example,  then the  closed $\varepsilon$-neighbourhood of any set in $\mathcal{K}(X)$ has lower spectrum equal  to 1. Of course, pathological examples are possible: if $X$ is a single point, for example, then all of the dimension spectra are trivially continuous.

Finally, we prove that the lower \emph{dimension} is Baire 2, thus answering \cite[Question 4.6]{Fraser}.  Another decomposition argument yields
\begin{align*}
 (\Delta'_\text{L})^{-1} \big( (a,b) \big) = & \left(  \bigcup_{n \in \mathbb{N}} \  \bigcup_{C \in \mathbb{Q}^+} \  \bigcup_{\rho \in \mathbb{Q}^+} \  \bigcap_{R \in \mathbb{Q} \cap (0,\rho)} \ \bigcap_{r \in \mathbb{Q} \cap (0,R)} \ \Big(\mathcal{M}^{\inf}_{r,R}\Big)^{-1}\,  \Big( \big(  C \, (R/r)^{a+1/n} , \infty \big) \Big) \right)  \\ \\
  &  \hspace{-6mm} \bigcap \ \left( \bigcup_{n \in \mathbb{N}} \  \bigcap_{C \in \mathbb{Q}^+} \  \bigcap_{\rho \in \mathbb{Q}^+} \  \bigcup_{R \in \mathbb{Q} \cap (0,\rho)} \   \bigcup_{r \in \mathbb{Q} \cap (0,R)} \  \Big(\mathcal{N}_{r,R}^{\inf} \Big)^{-1}\,  \Big( \big( -\infty,  \, C \, (R/r)^{b-1/n} \big) \Big) \right)
\end{align*}
which is $\mathcal{G}_{\delta \sigma}$ by the lower semicontinuity of $\mathcal{M}^{\inf}_{r,R}$ and the upper semicontinuity of $\mathcal{N}^{\inf}_{r,R}$, see Lemma \ref{semicontinuity} parts (6) and (5) respectively.  This completes the proof that $\Delta'_L$ is Baire 2.  It was observed in \cite{Fraser} that $\Delta'_L$ is not Baire 1, again since it is discontinuous everywhere.

\newpage

\section{Decreasing sequences with decreasing gaps}  \label{SequencesSect}

In this section we will consider a simple family of  countable compact fractal subsets of the line which allow explicit calculation of the Assouad spectra.  Observe that any countable compact set has lower spectra and modified lower spectra equal to zero and so we omit discussion of these spectra for the duration of this section.  Despite being relatively simple, these fractal sequences have several useful properties. First they provide us with a continuum of examples where the upper bound from Proposition \ref{BB} is attained.  Secondly, they provide examples where the \emph{lower} bound from Proposition \ref{BB} is attained, see Example \ref{lowersharpe}.  Thus we demonstrate the sharpness of Proposition \ref{BB}.   They also allow us to analyse the bounds for dimension distortion under bi-H\"older maps given in Corollary \ref{Holdercor} in an explicit and representative way, see Section \ref{seq1}.

More precisely, we study  \emph{decreasing sequences with decreasing gaps}, which we formulate as follows.  Let $f$ be a function from $\mathbb{R}^+$ to $[0,1]$ such that $f(x)$ and $g(x):=f(x)-f(x+1)$ are both strictly decreasing functions and they converge
to $0$ as $x\rightarrow\infty$.  We also assume for convenience that  both $f$ and $g$ are smooth.  Our set of interest is then
\[
F \ = \ \{f(n)\}_{n\geq 1}  \ \cup \ \{0\}.
\]
The following result (stated using our notation) was proved by Garc\'ia, Hare and Mendivil \cite[Proposition 4]{Garcia}.  We also obtained this result, using a different proof, in answer to  a question posed to us by Chris Miller (Ohio State University), but we omit our argument and refer the reader to \cite{Garcia}. We also refer the reader to our proof of Theorem \ref{assspiral}, which proves a similar dichotomy in a different setting.

\begin{thm}[Garc\'ia, Hare and Mendivil]\label{seqdichotomy}
Let $F= \{0\}  \cup \{f(n)\}_{n\in\mathbb{N}}$ be a decreasing sequence with decreasing gaps as described above. Then the Assouad dimension of $F$ is either $0$ or $1$. Moreover, the Assouad dimension is 0 if and only if $f(n)$ decays to 0 exponentially.
\end{thm}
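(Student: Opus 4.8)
The plan is to reduce $\dim_{\mathrm{A}}F$ to the covering numbers of windows $B(x,R)\cap F$, locate the worst window, and then read off the dichotomy from the decay rate of $f$. Throughout write $g(x)=f(x)-f(x+1)$ for the gap function and set $N_R=\min\{n\geq1:f(n)\leq R\}$ and $P_r=\max\{n\geq1:g(n)\geq r\}=\lfloor g^{-1}(r)\rfloor$, so that $F\cap[0,R]=\{f(n):n\geq N_R\}\cup\{0\}$ and $g(n)\geq r$ exactly for $n\leq P_r$. Since $g$ is strictly decreasing, the points of $F$ cluster more and more tightly as one approaches $0$; exploiting this I would first show that, up to a bounded multiplicative constant and a bounded rescaling of $R$, the supremum $\sup_{x\in F}N(B(x,R)\cap F,r)$ is realised at $x=0$, and then establish the elementary two-term estimate
\[
\sup_{x\in F}N(B(x,R)\cap F,r)\ \asymp\ (P_r-N_R)_{+}\ +\ \frac{\min\{R,\,f(P_r+1)\}}{r}\ +\ 1 ,
\]
the first term counting the points of $F\cap[0,R]$ isolated at resolution $r$, and the second the single dense cluster of $F$ lying in $[0,f(P_r+1)]$. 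Since $\dim_{\mathrm{A}}F\leq1$ automatically and, by the doubling of $\mathbb{R}$, the left-hand side is always $\lesssim R/r$, the whole problem reduces to understanding this right-hand side.

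For the implication ``$f$ does not decay exponentially $\Rightarrow\dim_{\mathrm{A}}F=1$'' it suffices to use $x=0$. Given small $r$, couple it with the scale $R=R(r):=f(P_r)$; then $N_R=P_r$ (as $f$ is strictly decreasing), the isolated-points term vanishes, and the estimate gives $N(B(0,R)\cap F,r)\asymp f(P_r+1)/r=\big(f(P_r)-g(P_r)\big)/r$. Now take $r$ of the special form $r=g(n)$ — permissible because $g$ is strictly decreasing, so $g^{-1}(g(n))=n$ and $P_r=n$ — which turns this into $\asymp f(n)/g(n)-1$, while $R/r=f(n)/g(n)$. Hence along any sequence $n_j\to\infty$ with $f(n_j)/g(n_j)\to\infty$ we obtain configurations with $N(B(0,R)\cap F,r)\asymp R/r$, $R/r\to\infty$ and $R=f(n_j)\to0$, which forces $\dim_{\mathrm{A}}F\geq1$ and hence $\dim_{\mathrm{A}}F=1$. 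Such a sequence $n_j$ exists whenever $f$ does not decay exponentially: otherwise $f(n)/g(n)\leq T$ for all large $n$, so $f(n+1)=f(n)-g(n)\leq(1-1/T)f(n)$ and $f$ decays exponentially, a contradiction.

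For the converse implication ``$f$ decays exponentially $\Rightarrow\dim_{\mathrm{A}}F=0$'' one needs an upper bound on $\sup_{x\in F}N(B(x,R)\cap F,r)$ valid for all $0<r<R\leq\rho$, and here the worst-window reduction above is genuinely used. The key structural input — which I expect to be the main obstacle, and which is precisely where the decreasing-gaps hypothesis enters in an essential way — is the equivalence
\[
f\ \text{decays exponentially}\quad\Longleftrightarrow\quad g\ \text{decays exponentially}\quad\Longleftrightarrow\quad \textstyle\sup_{n}\,f(n)/g(n)<\infty .
\]
The first equivalence is immediate from $g\leq f$ and $f(n)=\sum_{k\geq n}g(k)\leq\sum_{k\geq n}C\lambda^{k}$; the content is that exponential decay prevents $f(n)/g(n)$ from ever being large, for otherwise — since $g$ is decreasing — $f$ would be forced to remain above $f(n)/2$ for roughly $f(n)/g(n)$ consecutive indices, yielding a sub-exponential lower bound for $f$. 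Granting $T:=\sup_{n}f(n)/g(n)<\infty$, the bound $\sum_{k>n}g(k)\leq(T-1)g(n)$ gives $g\big(n+\lceil2T\rceil\big)\leq g(n)/2$, i.e. $g$ halves over a bounded number of indices; hence for $0<r<R$ the indices $n\in[N_R,P_r]$ obey $g(N_R)/g(P_r)\geq2^{(P_r-N_R)/\lceil2T\rceil-1}$ with $g(N_R)\leq f(N_R)\leq R$ and $g(P_r)\geq r$, so that $P_r-N_R$ is $O_T(\log(R/r)+1)$, while $f(P_r+1)\leq T\,g(P_r+1)<Tr$ makes the second term of the two-term estimate $O(T)$. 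Thus $\sup_{x\in F}N(B(x,R)\cap F,r)=O_T(\log(R/r)+1)$ uniformly, and since $\log\big(\log(R/r)+1\big)=o\big(\log(R/r)\big)$ this forces $\dim_{\mathrm{A}}F=0$, completing the dichotomy.
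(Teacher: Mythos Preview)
The paper does not actually prove this theorem: it attributes the result to Garc\'ia--Hare--Mendivil and explicitly states ``we omit our argument and refer the reader to [GHM]''. So there is no proof to compare to directly. That said, the paper does prove an exact analogue for spirals (Theorem~\ref{assspiral}), and that argument is essentially your direction ``non-exponential $\Rightarrow\dim_{\mathrm A}F=1$'': one couples $r=g(n)$ with $R=f(n)$, observes $N(B(0,R)\cap F,r)\asymp R/r$, and concludes that if $R/r$ were uniformly bounded then $f(n+1)\leq(1-1/M)f(n)$, hence exponential decay. Your treatment of this direction is correct and matches that strategy.

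There is, however, a real gap in your converse direction. You assert the equivalence
\[
f(n)\leq C\lambda^{n}\ \text{for some }\lambda<1\quad\Longleftrightarrow\quad \sup_{n}\,f(n)/g(n)<\infty,
\]
justifying ``$\Rightarrow$'' by saying that a large value of $f(n)/g(n)$ forces $f$ to stay above $f(n)/2$ for roughly $f(n)/g(n)$ indices, ``yielding a sub-exponential lower bound''. That inference is only local and does not give a global lower bound. In fact the implication is false: partition $\mathbb N$ into consecutive blocks $B_k$ of length $k$ and on $B_k$ take $g$ essentially constant equal to $2^{-k^{3}}$ (with a tiny perturbation for strict decrease). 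Then at the start of $B_k$ one has $f/g\approx k\to\infty$, yet $f$ is at most $k\,2^{-k^{3}}$ on $B_k$ while $n\asymp k^{2}$ there, so $f(n)\leq C\lambda^{n}$ for every $\lambda<1$. By your own (correct) first direction this $F$ has $\dim_{\mathrm A}F=1$.

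Thus the dichotomy, read literally as ``$\dim_{\mathrm A}F=0\Leftrightarrow f(n)\leq C\lambda^{n}$'', is false; the correct characterisation is $\dim_{\mathrm A}F=0\Leftrightarrow\sup_n f(n)/g(n)<\infty\Leftrightarrow\sup_n f(n+1)/f(n)<1$, and ``decays exponentially'' in the theorem should be read in this ratio sense (which is exactly what the paper's spiral argument derives). With that reading your proof is complete; but your claimed equivalence with the weaker notion $f(n)\leq C\lambda^{n}$, and its sketched justification, are incorrect and should be dropped.
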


Our main result on dimensions of decreasing sequences with decreasing gaps is as follows and shows that the Assouad spectrum only depends on the upper box dimension of the set.   A pleasant ancillary benefit of the explicit formula we obtain is that for all such sets the upper bound in Proposition \ref{BB} is sharp.  Also, we note that the upper box dimensions of such sets are well studied and can be computed effectively.  For example, the upper (and lower) box dimensions can be estimated in terms of the exponential decay rate of the gap lengths $g(n)$.  See the `cut-out sets' discussed in \cite[Chapter 3]{techniques} for more information.

\begin{thm} \label{sequencesmain}
Let $F= \{0\}  \cup \{f(n)\}_{n\in\mathbb{N}}$ be a decreasing sequence with decreasing gaps as described above.  Then for all $\theta \in (0,1)$ we have
$$\dim_\mathrm{A}^{\theta} F \ = \ \frac{\overline{\dim_\mathrm{B}}F}{1-\theta}\wedge 1.$$
\end{thm}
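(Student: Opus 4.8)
Write $B = \bd F$ for the upper box dimension of $F$. By Proposition \ref{BB} we already know the upper bound $\d F \le \frac{B}{1-\theta}\wedge 1$ for all $\theta\in(0,1)$, so the entire content of the theorem is the matching lower bound $\d F \ge \frac{B}{1-\theta}\wedge 1$. The plan is to exploit the special structure of a decreasing sequence with decreasing gaps: the set looks, at scale $R$, like a cluster of points accumulating at $0$, and the key feature is that the gap function $g$ is \emph{monotone}, so the local combinatorics of $F$ near $0$ controls $N(B(x,R)\cap F, R^{1/\theta})$ for the worst-case $x$, and that worst case is essentially always $x$ close to $0$.

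First I would set up the elementary counting. Given a scale $R>0$, let $n(R)$ be the largest integer $n$ with $f(n) \ge R$ (roughly), so that the points $f(1),\dots$ up to index about $n(R)$ lie in $[R,1]$ and the tail $\{f(n): n > n(R)\}\cup\{0\}$ lies in $[0,R]$ and hence inside $B(0,R)$. Since the gaps $g(n) = f(n)-f(n+1)$ are decreasing, a standard argument (see the cut-out set discussion in \cite[Chapter 3]{techniques}) gives $N(F, R) \asymp n(g^{-1}(R))$-type quantities; more precisely, counting how many of the gaps exceed a given length $r$ governs $N(F,r)$, and because $g$ is monotone this count is exactly an initial segment of indices. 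I would record the two consequences of $B = \bd F$ that I need: for all small $R$, $N(B(0,R)\cap F, R^{1/\theta}) \lesssim R^{-B^+/\theta + \text{(lower order)}}$ is \emph{false in general} — rather, the right statement is that the number of points of $F$ in $B(0,R)$ that are $R^{1/\theta}$-separated is, along a sequence $R\to 0$ realising the limsup, $\gtrsim (R^{1/\theta})^{-B^-}$ up to the correction coming from the points within $[R^{1/\theta},R]$ all being individually resolved. The clean way to phrase this: because $F\cap[0,R]$ is itself (a translate/rescale of) a decreasing sequence with decreasing gaps with the same tail behaviour, $N(F\cap[0,R], R^{1/\theta}) \asymp R^{-B/\theta}$ along the appropriate sequence, while $N(F\cap[0,R],R) \asymp 1$; combined with the fact that all of $F\cap[0,R]\setminus\{\text{finitely many near }R\}$ sits in a single ball $B(0,R)$.

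The heart of the lower bound is then a two-regime analysis in $\theta$. When $\frac{B}{1-\theta} < 1$, i.e. $\theta < 1-B$, I want $\d F \ge \frac{B}{1-\theta}$: here I take the sequence $R\to 0$ along which $N(F,R^{1/\theta}) \gtrsim R^{-B^-/\theta}$ and note $N(B(0,R)\cap F, R^{1/\theta}) \ge N(F,R^{1/\theta})/N(F,R) \gtrsim R^{-B^-/\theta + B^+}$, which is exactly the computation already done in the proof of Proposition \ref{BB}; feeding this into the explicit formula $\d F = \limsup_{R\to 0}\sup_x \frac{\log N(B(x,R)\cap F,R^{1/\theta})}{(1-1/\theta)\log R}$ gives $\d F \ge \frac{B^-/\theta - B^+}{1/\theta - 1}$, and letting $B^-\nearrow B$, $B^+\searrow B$ yields $\frac{B}{1-\theta}$. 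When $\theta \ge 1-B$, I want $\d F \ge 1$: here the point is that near $0$ the set $F$ is \emph{coarsely dense} at scale $R^{1/\theta}$ inside an interval of length $\approx R$, because $B \ge 1-\theta$ forces the gaps $g(n)$ at the relevant indices to be $\lesssim R^{1/\theta}$ while the points still range over an interval of length $\asymp R$; concretely, one shows $N(B(0,R)\cap F, R^{1/\theta}) \gtrsim R/R^{1/\theta} = R^{1-1/\theta}$, giving $\d F \ge 1$. Monotonicity of $g$ is what makes "the gaps near the right place are all small" a clean statement rather than an averaged one.

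The step I expect to be the main obstacle is the passage from "$\bd F = B$" to the quantitative separated-point count inside the single ball $B(0,R)$ with the correct exponent $B/\theta$ — i.e., making rigorous that $N(F\cap[0,R], R^{1/\theta})$ behaves like $N(F, R^{1/\theta})$ up to the negligible factor $N(F,R)$, uniformly enough in $R$ along the extremising sequence, and that the "$+$/$-$" corrections from $B^\pm$ wash out in the limit. This requires knowing that $F\cap[0,R]$ inherits the box-dimension behaviour of $F$ — true because a decreasing sequence with decreasing gaps is "self-similar at $0$" in the weak sense that its tails are again such sequences with unchanged asymptotic gap decay — and that the supremum over $x\in F$ in the Assouad-spectrum formula is genuinely attained (up to constants) at $x\to 0$, which again uses monotonicity of both $f$ and $g$ so that no interior cluster is denser than the cluster at the accumulation point. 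Once these structural facts are in place, the rest is the routine logarithmic bookkeeping already illustrated in the proofs of Propositions \ref{BB} and \ref{Abounds}.
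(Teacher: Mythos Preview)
Your proposal contains an arithmetic slip that undermines the whole lower-bound argument. In the regime $\theta<1-B$ you write that the generic covering inequality
\[
\sup_{x}N\bigl(B(x,R)\cap F,\,R^{1/\theta}\bigr)\ \ge\ \frac{N(F,R^{1/\theta})}{N(F,R)}\ \gtrsim\ R^{-B^-/\theta+B^+}
\]
feeds into the explicit formula to give $\d F\ge \dfrac{B^-/\theta-B^+}{1/\theta-1}$, and then claim that letting $B^\pm\to B$ produces $\dfrac{B}{1-\theta}$. It does not:
\[
\frac{B/\theta-B}{1/\theta-1}\ =\ B\cdot\frac{1/\theta-1}{1/\theta-1}\ =\ B.
\]
So your argument recovers only the \emph{lower} bound from Proposition~\ref{BB}, namely $\d F\ge \bd F$, which is already known for arbitrary sets and says nothing about attaining the \emph{upper} bound $B/(1-\theta)$. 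You even flag this as ``exactly the computation already done in the proof of Proposition~\ref{BB}''---and indeed it is, with the same conclusion. The entire content of the theorem is precisely the gap between $B$ and $B/(1-\theta)$, and your proposal does not touch it.

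The paper's proof closes this gap via a structural formula specific to decreasing sequences with decreasing gaps: for $R\ge f(g^{-1}(r))$,
\[
N\bigl(B(0,R)\cap F,\,r\bigr)\ \asymp\ \frac{f\circ g^{-1}(r)}{r}\ +\ g^{-1}(r)-f^{-1}(R),
\]
which separates a ``dense'' contribution (where gaps are $<r$ and the set behaves like an interval of length $f(g^{-1}(r))$) from a ``discrete'' contribution (isolated points). One then runs a dichotomy: either $g^{-1}(R^{1/\theta})\le f^{-1}(R^{(1-B^-)/\theta})$ along a sequence, in which case the first term already gives exponent $B^-/(1-\theta)$; or the opposite inequality holds for all small $R$, forcing $f(n)<g(n)^{1-B^-}$, which by a telescoping/differential-inequality argument yields the quantitative decay $f(n)\lesssim n^{-(1-B^-)/B^-}$, and then the second term gives the required exponent. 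Your sketch contains neither the key formula nor this dichotomy, and the vague assertion in the $\theta\ge 1-B$ regime that ``$B\ge 1-\theta$ forces the gaps at the relevant indices to be $\lesssim R^{1/\theta}$'' is exactly the first horn of this dichotomy, asserted without proof; the second horn shows it can fail and must be handled separately.
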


\begin{proof}If either $\overline{\dim_\mathrm{B}}F=0$ or $\overline{\dim_\mathrm{B}}F=1$ then the result follows immediately from Proposition \ref{BB} and therefore we may assume from now on that
$$\overline{\dim_\mathrm{B}}F=B \in (0,1).$$
We start by giving some general bounds.  Let $0<r<R<1$ and consider the number 
\[
\sup_{x\in F}N(B(x,R)\cap F,r).
\]
 For any $r>0$, there is a smallest number $n_r$ such that $g(n_r)<r$. Notice that by definition $n_r=[g^{-1}(r)]=g^{-1}(r)+O(1)$.   If $R\leq f(n_r)$ then we will need approximately $(R/r)$ many $r$-balls to cover $[0,R)\cap F$, and this is already of the largest order possible. Therefore we will focus on  the case where $R>f(n_r)$. The following bound follows from the fact that the sequence has decreasing gaps:
\begin{eqnarray*}
N(B(0,R/2)\cap F,r) \ \leq  \  \sup_{x\in F}N(B(x,R/2)\cap F,r) \ \leq  \ N(B(0,R)\cap F,r)  \ \leq \  2N(B(0,R/2)\cap F,r).
\end{eqnarray*}
If $R\geq f(n_r)$, then we have the key formula:
\begin{equation}
N(B(0,R)\cap F,r) \ \asymp  \  \frac{f\circ g^{-1}(r)}{r}+g^{-1}(r)-f^{-1}(R). \label{KEY}
\end{equation}
This is because for points smaller than $f\circ g^{-1}(r)$ the gaps are smaller than $r$, and therefore we need approximately $\frac{f\circ g^{-1}(r)}{r}$ many $r$-balls to cover them.  Moreover, for points between $f\circ g^{-1}(r)$ and $R$, of which there are approximately $g^{-1}(r)-f^{-1}(R)$ many, we need one $r$-ball for each of them.

Let $\theta \in (0, 1-B)$ and observe that if $R>0$ is sufficiently small, then $g^{-1}(R^{1/\theta})-f^{-1}(R)\geq 0$. It therefore follows from the key formula (\ref{KEY}) that 
$$\sup_{x \in F} N(B(x,R)\cap F,R^{1/\theta}) \   \asymp \  \frac{f\circ g^{-1}(R^{1/\theta})}{R^{1/\theta}}+g^{-1}(R^{1/\theta})-f^{-1}(R).$$
 If we can find infinitely many $R\to 0$ such that
$$g^{-1}(R^{1/\theta})\leq f^{-1}\left({R^{(1-B^-)/\theta}}\right)$$
then
$$\frac{f\circ g^{-1}(R^{1/\theta})}{R^{1/\theta}}\geq {R^{-B^-/\theta}}$$
and  we get $\dim_\mathrm{A}^{\theta} F\geq\frac{B^-}{1-\theta}$ as required.
Therefore assume that for all sufficiently small $R>0$ we have
$$g^{-1}(R^{1/\theta})> f^{-1}\left({R^{(1-B^-)/\theta}}\right).$$
This implies that for $R$ small enough we have
$$f\circ g^{-1}(R)<{R}^{1-B^-}$$
which in turn implies that for $n$ large enough we have
$$f(n)< g(n)^{1-B^-}=(f(n)-f(n+1))^{1-B^-}$$
and
$$g(n) = f(n)-f(n+1)>f(n)^{1/(1-B^-)}.$$
This holds for all large enough $n$ and therefore we can assume it holds for all $n$ without loss of generality. For simplicity, we write $\alpha=\frac{1}{1-B^-}>1$.  We have
\begin{eqnarray*}
f(n+1)^{1-\alpha}-f(n)^{1-\alpha} &=& (f(n)+f(n+1)-f(n))^{1-\alpha}-f(n)^{1-\alpha} \\ 
&=& f(n)^{1-\alpha}\left(\left( 1+\frac{f(n+1)-f(n)}{f(n)}\right)^{1-\alpha}-1\right) \\ 
&\geq&  f(n)^{1-\alpha}\left( (1-\alpha)\frac{f(n+1)-f(n)}{f(n)}\right) \\ 
&=& (\alpha-1)\frac{g(n)}{f(n)^{\alpha}} \\ 
&>& (\alpha-1).
\end{eqnarray*}
Iterating the above inequality yields
\[
f(n)^{1-\alpha}-f(1)^{1-\alpha} > (\alpha-1)n 
\]
and therefore
\[
f(n) <\left( \frac{1}{(\alpha-1)n+f(1)^{1-\alpha}}\right)^{\frac{1}{\alpha-1}}.
\]
This implies that for all large enough $n$ we have
$$f(n)\lesssim \left(\frac{1}{n}\right)^{\frac{1}{\alpha-1}}=\left( \frac{1}{n}\right)^{\frac{1-B^-}{B^-}}$$
and for small enough $x$ we have
\begin{equation} \label{newgoodbound}
f^{-1}(x)\lesssim \left(\frac{1}{x}\right)^\frac{B^-}{1-B^-}.
\end{equation}
Recalling that $B \in (0,1)$ is the upper box dimension, we can find a sequence $r_i\to 0$ such that:
$$\frac{f(n_{r_i})}{r_i}+n_{r_i}\gtrsim N(F, r_i) \gtrsim {r_i}^{-B^-}.$$
If for infinitely many $i$ we have $$\frac{f(n_{r_i})}{r_i}\gtrsim r_i^{-B^-}$$ then the situation is the same as in the beginning of the proof and we get our conclusion.  Otherwise we have infinitely many $i$ such that $$n_{r_i}\gtrsim {r_i}^{-B^-}.$$
It follows from the key formula (\ref{KEY}), that for infinitely many $i$ we have
\begin{eqnarray*}
\sup_{x \in F} N(B(x,R_i) \cap F, r_i)  &\gtrsim& g^{-1}(r_i)-f^{-1}(R_i) \\ 
&=& g^{-1}(r_i)-f^{-1}({r_i}^\theta)\\ 
&\gtrsim& {r_i}^{-B^-}-\left( \frac{1}{r_i^\theta} \right)^{B^-/(1-B^-)} \qquad \text{by (\ref{newgoodbound})} \\ 
&=&r_i^{-B^-}-r_i^{-B^-\theta/(1-B^-)}.
\end{eqnarray*}
Since  $\theta<1-B^-$  we have  $B^->\frac{B^-\theta}{1-B^-}$ and so
\[
\sup_{x \in F} N(B(x,R_i), r_i)  \gtrsim r_i^{-B^-}
\]
for infinitely many $i$.  It follows that for $\theta \in (0,1-B)$ we have $\dim_\mathrm{A}^{\theta} F\geq B^-/ (1-\theta)$ which, combined with  Proposition \ref{BB}, yields the desired result for this range of $\theta$.  Finally, continuity of the spectrum (Corollary \ref{Con}) gives that for $\theta = 1-B$ we have  $\dim_\mathrm{A}^{\theta} F = 1 = \dim_\mathrm{A} F$ and Corollary \ref{stayA} yields that this also holds for all $\theta \in [ 1-B,1)$ completing the proof.
\end{proof}

\begin{example} \label{lowersharpe}
The set  $E = \{e^{-\sqrt{n}} : n \in \mathbb{N}\} \cup \{0\}$ is a simple example where the spectrum does not peak at the Assouad dimension.  Straightforward arguments, which we omit, yield that $\dim_\mathrm{B} E =   \dim_\mathrm{A}^{\theta} E = 0 <  \dim_\mathrm{A} E = 1$.  Moreover, this can example can be modified to provide constructions demonstrating the sharpness of the lower bound from Proposition \ref{BB}, even if the box dimension is positive.  For example, consider $F:=[0,1] \times E$.  It follows from the discussion here and Proposition \ref{basic3} that
\[
\dim_\mathrm{B} F  \ =  \   \dim_\mathrm{A}^{\theta} F \ = \  1 \  < \   \dim_\mathrm{A} F  \ = \  2.
\]
\end{example}

\subsection{Sequences with polynomial decay}  \label{seq1}

In this section provide our first concrete example where we can compute the Assouad spectrum explicitly. We specialise to a particular continuously parameterised family of decreasing sequences with prescribed polynomial decay.  In particular, for a fixed $\lambda>0$ we study the set
\[
F_\lambda \ = \  \{0\} \, \cup \, \left\{ \frac{1}{n^{\lambda}} \right\}_{n \in \mathbb{N}}
\]
and give an explicit formula for $\dim_\mathrm{A}^{\theta} F_\lambda$.  These sets $F_\lambda$ are some of the first examples one considers when studying the box and Assouad dimensions (in particular $F_1$) and elementary calculations reveal that for any $\lambda>0$
\[
\dim_\mathrm{B} F_\lambda \ =  \ \frac{1}{\lambda+1} \ < \ 1 = \dim_\mathrm{A} F_\lambda.
\]
We therefore have the following immediate Corollary of Theorem \ref{sequencesmain}.

\begin{cor}
For all $\lambda>0$ and  $\theta \in (0,1)$ we have
$$\dim_\mathrm{A}^{\theta} F_\lambda \ = \  \frac{1}{(\lambda+1)(1-\theta)}\wedge 1.$$
\end{cor}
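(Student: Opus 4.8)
The plan is to obtain this directly from Theorem~\ref{sequencesmain}: one checks that $F_\lambda$ belongs to the class of decreasing sequences with decreasing gaps, computes its upper box dimension, and substitutes. Take $f(x) = x^{-\lambda}$ on $[1,\infty)$, so that $F_\lambda = \{0\} \cup \{f(n)\}_{n \ge 1}$. Then $f$ is smooth, strictly decreasing, takes values in $(0,1]$, and tends to $0$ as $x \to \infty$; the fact that $f$ is not $[0,1]$-valued on all of $\mathbb{R}^+$ is irrelevant, since only the values $f(n)$ with $n \ge 1$ enter the construction, and one may restrict the domain or reparametrise without affecting $F_\lambda$.

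The only step that is not completely immediate is verifying that the gap function $g(x) := f(x) - f(x+1) = x^{-\lambda} - (x+1)^{-\lambda}$ is itself strictly decreasing (and positive, tending to $0$). Positivity and the limit are clear since $0 < g(x) < f(x) \to 0$. For monotonicity, differentiate: $g'(x) = \lambda\big((x+1)^{-\lambda-1} - x^{-\lambda-1}\big) < 0$, because $t \mapsto t^{-\lambda-1}$ is strictly decreasing on $(0,\infty)$ and $x+1 > x$. Hence $F_\lambda$ is a decreasing sequence with decreasing gaps in the sense of this section, so Theorem~\ref{sequencesmain} applies. This monotonicity check is really the only content here, and it is routine calculus; there is no genuine obstacle.

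It remains to insert the value $\overline{\dim}_\mathrm{B} F_\lambda = \frac{1}{\lambda+1}$, which is the standard elementary computation already recorded above (and which also follows from the key formula~(\ref{KEY}) specialised to $f(x) = x^{-\lambda}$: since $g(n) \asymp n^{-\lambda-1}$ one has $g^{-1}(r) \asymp r^{-1/(\lambda+1)}$ and $f\circ g^{-1}(r) \asymp r^{\lambda/(\lambda+1)}$, whence both terms in (\ref{KEY}) are $\asymp r^{-1/(\lambda+1)}$ and therefore $N(F_\lambda,r) \asymp r^{-1/(\lambda+1)}$, forcing $\dim_\mathrm{B} F_\lambda = \frac{1}{\lambda+1}$). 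Substituting into the formula $\dim_\mathrm{A}^\theta F = \frac{\overline{\dim}_\mathrm{B} F}{1-\theta} \wedge 1$ from Theorem~\ref{sequencesmain} yields $\dim_\mathrm{A}^\theta F_\lambda = \frac{1}{(\lambda+1)(1-\theta)} \wedge 1$ for every $\theta \in (0,1)$, as claimed.
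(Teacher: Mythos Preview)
Your proof is correct and follows exactly the approach the paper intends: the corollary is stated as an immediate consequence of Theorem~\ref{sequencesmain} together with the already-recorded value $\dim_\mathrm{B} F_\lambda = \tfrac{1}{\lambda+1}$, and you have simply filled in the routine verifications (monotonicity of the gap function and the box-dimension computation) that the paper leaves to the reader.
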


\begin{figure}[H] 
	\centering
	\includegraphics[width=146mm]{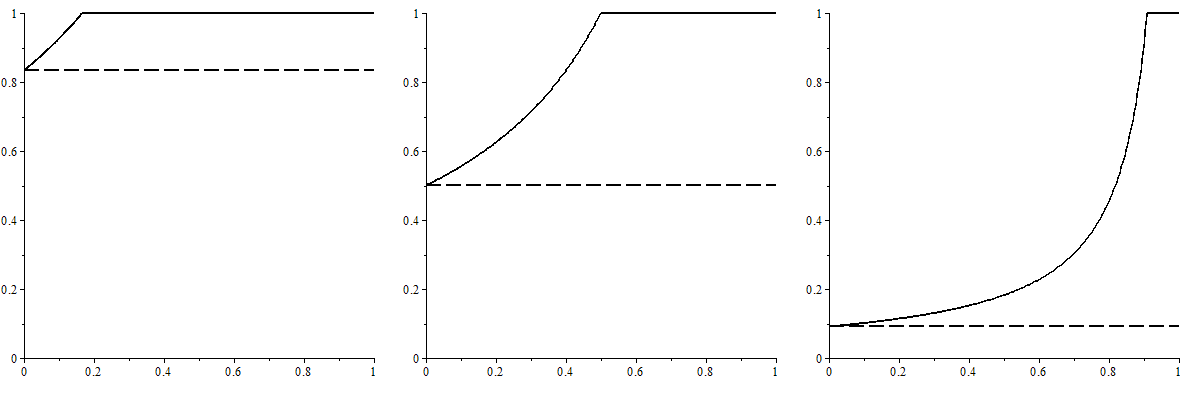}
\caption{Three plots of the Assouad spectrum of $F_\lambda$ for $\lambda = 1/5, 1, 10$ going from left to right.  The bounds from Proposition \ref{BB} are shown as dashed lines, although the upper bound is obtained in each case. \label{sequences}}
\end{figure}

The family of sets $\{F_\lambda\}_{\lambda>0}$ studied in this section provide us with a simple continuum of sets with the property that any one can be mapped onto any other by a bi-H\"older map.  Since we have a very simple explicit formula for the Assouad spectrum of each $F_\lambda$, this provides an excellent opportunity to test the bounds obtained in Corollary \ref{Holdercor}.  For $\alpha>0$, let $S_\alpha : [0,1] \to [0,1]$ be defined by $S_\alpha(x) = x^\alpha$ and observe that, for any $\lambda>0$, we have $S_\alpha(F_\lambda) \ = \ F_{\alpha \lambda}$.  Also note that if $\alpha \in (0,1)$ then $S$ is $\alpha$-H\"older with a Lipschitz inverse,  and if $\alpha >1$ then $S$ is Lipschitz  with a $1/\alpha$-H\"older inverse. In the $\alpha \in (0,1)$ region, the bounds on the Assouad spectrum from Corollary \ref{Holdercor} yield that for $\theta \in (0,1)$
\[
\frac{1-\theta/\alpha}{(1-\theta)}\dim_\mathrm{A}^{\theta/\alpha}F_\lambda  \, \vee \,   \overline{\dim}_\mathrm{B} F_\lambda      \ \leq \  \dim_\mathrm{A}^{\theta} S_\alpha(F_\lambda) \  \leq \  \frac{1-\alpha\theta}{\alpha(1-\theta)}\dim_\mathrm{A}^{\alpha\theta} F_\lambda  \, \wedge \, 1
\]
and applying the explicit formulae for the Assouad spectra derived above gives:
\begin{eqnarray*}
\frac{1-\theta/\alpha}{(1-\theta)} \left( \frac{1}{(\lambda+1)(1-\theta/\alpha)} \wedge 1 \right)  \, \vee \,   \frac{1}{(\lambda+1)}   &\leq&  \frac{1}{(\alpha\lambda+1)(1-\theta)} \wedge 1  \\ \\
&\leq& \frac{1-\alpha\theta}{\alpha(1-\theta)}   \, \wedge \,   \frac{1}{\alpha (\lambda+1)(1- \theta)}   \wedge 1 
\end{eqnarray*}
\begin{figure}[H] 
	\centering
	\includegraphics[width=120mm]{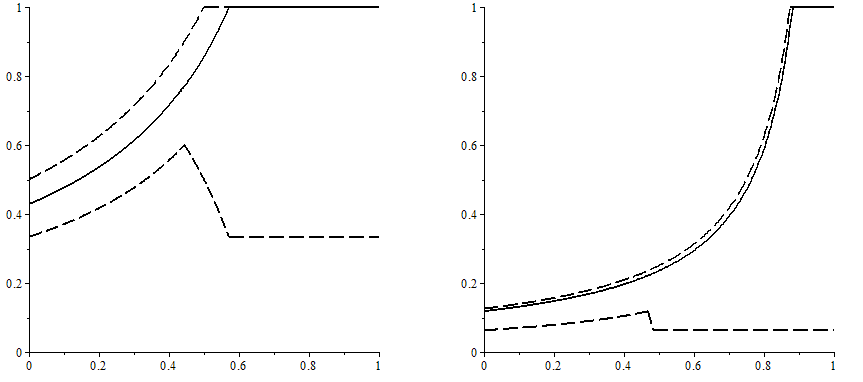}
\caption{Two plots of the bounds on the Assouad spectrum of the polynomial sequence $F_\lambda$ under bi-H\"older distortion by $S_\alpha$.  On the left $\lambda=2$ and $\alpha=2/3$ and on the right $\lambda=15$ and $\alpha=1/2$.  The actual spectrum of $S_\alpha(F_\lambda)$ is shown as a solid line and the bounds are dashed.\label{Holderpic1}}
\end{figure}
In the $\alpha >1$ region, the bounds on the Assouad spectrum from Corollary \ref{Holdercor} yield that for $\theta \in (0,1)$
\[
\frac{1-\alpha\theta}{\alpha(1-\theta)}\dim_\mathrm{A}^{\alpha\theta}F_\lambda  \, \vee \,   \frac{\overline{\dim}_\mathrm{B} F_\lambda}{\alpha}      \ \leq \  \dim_\mathrm{A}^{\theta} S_\alpha(F_\lambda) \  \leq \  \frac{1-\theta/\alpha}{1-\theta}\dim_\mathrm{A}^{\theta/\alpha} F_\lambda  \, \wedge \, 1
\]
and applying the explicit formulae for the Assouad spectra derived above gives:
\begin{eqnarray*}
\frac{1-\alpha\theta}{\alpha(1-\theta)} \left( \frac{1}{(\lambda+1)(1-\alpha\theta)} \wedge 1 \right)  \, \vee \,   \frac{1}{\alpha (\lambda+1)}   &\leq&  \frac{1}{(\alpha\lambda+1)(1-\theta)} \wedge 1  \\ \\
&\leq& \frac{1-\theta/\alpha}{1-\theta}   \, \wedge \,   \frac{1}{(\lambda+1)(1- \theta)}   \wedge 1 
\end{eqnarray*}

\begin{figure}[H] \label{Holderpic2}
	\centering
	\includegraphics[width=120mm]{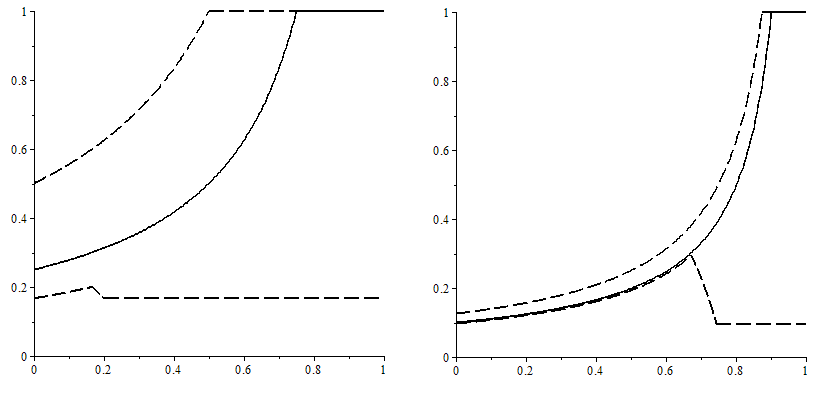}
\caption{Two plots of the bounds on the Assouad spectrum of the polynomial sequence $F_\lambda$ under bi-H\"older distortion by $S_\alpha$.  On the left $\lambda=1$ and $\alpha=3$ and on the right $\lambda=7$ and $\alpha=13/10$. The actual spectrum of $S_\alpha(F_\lambda)$ is shown as a solid line and the bounds are dashed.}
\end{figure}

\section{Unwinding spirals}  \label{SpiralsSect}

In this section we consider the problem of `unwinding spirals' or, more precisely, the question of whether a given spiral can be mapped to a unit line segment via a homeomorphism with certain `metric' restrictions; see \cite{spirals} for an overview of recent results in this direction.  A classical positive result is that the `logarithmic spiral' can be `unwound' to a unit line segment by a bi-Lipschitz map, see \cite{unwindspirals}.  Moreover, this is sharp in the sense that if the winding rate is slower than logarithmic, then a simple length estimate shows that the spiral cannot be unwound by a bi-Lipschitz map.  In particular, the spiral has infinite length and bi-Lipschitz images of sets with infinite length must have infinite length.

We are interested in the more general question of whether a spiral can  be unwound via a bi-H\"older homeomorphism and what restrictions are there on the bi-H\"older parameters?  Our main result is in the negative direction: we show that if the bi-H\"older map is too close to being bi-Lipschitz, then it cannot unwind the spiral, where `too close' is precisely characterised by the upper box dimension of the spiral.  Our result is a simple application of our work on how the Assouad spectrum can change under bi-H\"older maps and, moreover, we show that one gets strictly better information than if one considers the box or Hausdorff dimensions directly.

In general a \emph{spiral} $S$ is defined to be the set:
\[
S = \{ \phi(\alpha)\exp(i \alpha) \, : \, \alpha \in [0, \infty) \} \, \cup \{0 \}
\]
where $\phi$ is any continuous decreasing real-valued function such that $\lim_{\alpha\rightarrow \infty}\phi(\alpha)=0$ and for convenience we assume that $\phi(0) = 1$.  We say a spiral is \emph{convex differentiable} if $\phi$ is differentiable and its derivative is non-decreasing.  We also say that the spiral has  \emph{monotonic winding} if the function
\[
x \mapsto \phi(x) - \phi(x+2\pi)
\]
is decreasing in $x \geq 0$.  This is similar to the assumption that decreasing sequences have decreasing gaps.  Indeed, monotonic winding  guarantees that any ray starting at the origin intersects the spiral in a decreasing sequence with decreasing gaps.  If $\phi(x) = \exp(-cx)$ for some $c>0$, then the resulting spiral is often referred to as the \emph{logarithmic spiral} (mentioned above).  If
\[
\frac{\log\phi(x)}{x} \to 0
\]
as $x \to \infty$, then the winding is said to be \emph{sub-exponential}.  This is the interesting case since then the spiral has infinite length and thus cannot be unwound by a bi-Lipschitz homeomorphism.
\begin{figure}[H] 
	\centering
	\includegraphics[width=148mm]{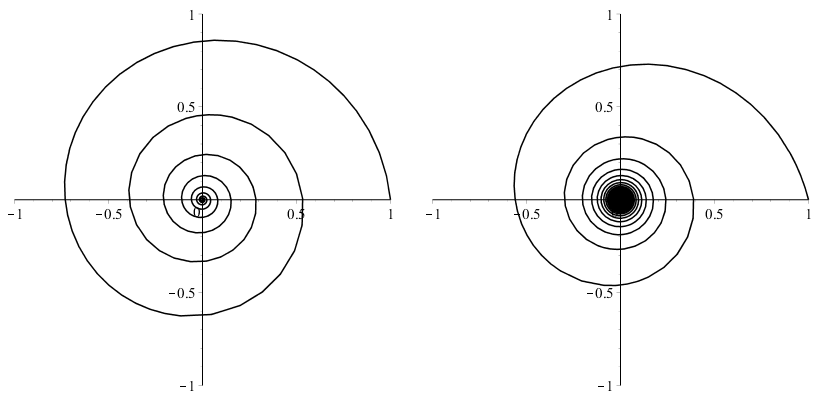}
\caption{Two spirals: on the left, the \emph{logarithmic spiral} with $c=1/10$; and, on the right, a spiral with sub-exponential winding where $\phi(x) = 1/(x/4+1)$.\label{Holderpic2}}
\end{figure}
First we prove a dichotomy for spirals with monotonic winding. 
\begin{thm} \label{assspiral}
Let $S$ be a spiral with monotonic winding.  If the winding is sub-exponential, then $\dim_\mathrm{A} S = 2$ and otherwise $\dim_\mathrm{A} S = 1$.
\end{thm}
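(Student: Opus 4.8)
The plan is to reduce the computation of $\dim_{\mathrm A}S$ to the analysis of decreasing sequences with decreasing gaps, using the monotonic winding hypothesis, and then play the box-dimension bounds of Proposition \ref{BB} against the upper bound $\dim_{\mathrm A}S\le 2$ (which holds since $S\subseteq\mathbb R^2$). First I would dispose of the non-sub-exponential case: if the winding is not sub-exponential, then $\limsup_{\alpha\to\infty}\log\phi(\alpha)/\alpha<0$ (using that $\phi$ is decreasing), so $\phi$ decays at least exponentially, whence the radial gap function $x\mapsto\phi(x)-\phi(x+2\pi)$ also decays exponentially; by Theorem \ref{seqdichotomy} the intersection of any ray from the origin with $S$ is a decreasing sequence with decreasing gaps and Assouad dimension $0$, and a direct covering argument (the spiral near scale $R$ looks like $\lesssim \log(1/R)$ arcs of length $\asymp R$, each a Lipschitz image of an interval) gives $\dim_{\mathrm A}S=1$. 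Since $S$ contains a rectifiable arc, $\dim_{\mathrm A}S\ge 1$, so equality holds.

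The substantive case is sub-exponential winding, where I want $\dim_{\mathrm A}S=2$. By Proposition \ref{BB} (or just monotonicity of Assouad dimension over subsets) it suffices to exhibit, for every $\alpha<2$, a ball $B(x,R)$ and a scale $r<R$ such that $N(B(x,R)\cap S,r)\gtrsim (R/r)^{\alpha}$ for suitable arbitrarily small scales. The natural place to look is a neighbourhood of the origin, or rather a small ball $B(x,R)$ centred at a point $x\in S$ far out along the spiral, where consecutive coils are packed together. Fix a large radius $\rho_0=\phi(\alpha_0)$ and look at the annular region where the spiral has radius $\asymp\rho_0$: in a ball of radius $R\ll\rho_0$ centred on the spiral there, the spiral consists of roughly $R/\big(\phi(\alpha_0)-\phi(\alpha_0+2\pi)\big)$ nearly-parallel arcs, each of length $\asymp R$ and separated by gaps $\asymp \phi(\alpha_0)-\phi(\alpha_0+2\pi)=:g$. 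Covering this configuration by $r$-balls requires $\asymp (R/r)\cdot(R/g)$ of them as long as $r<g<R$. So one obtains
\[
N\big(B(x,R)\cap S,\,r\big)\ \gtrsim\ \frac{R}{r}\cdot\frac{R}{g}\ =\ \Big(\frac{R}{r}\Big)^{2}\cdot\frac{r}{g},
\]
and choosing $g\asymp r$ (i.e.\ choosing $\alpha_0$ in terms of $r$ via $g^{-1}$) forces this to be $\gtrsim (R/r)^2$. The sub-exponential hypothesis is exactly what guarantees that one can still arrange $R$ with $g\le R$ while $g\asymp r\to 0$ and $R/r\to\infty$: since $g$ decays sub-exponentially, the window of admissible $R$ (namely $g<R<\phi(\alpha_0)$, with $\phi(\alpha_0)$ only sub-exponentially larger than $g$) is wide enough on a logarithmic scale to take $R/r$ as large as we like. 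This yields $\dim_{\mathrm A}S\ge 2-\varepsilon$ for all $\varepsilon>0$, hence $\dim_{\mathrm A}S=2$.

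The main obstacle is making the heuristic "near scale $R$ the spiral is $\asymp R/g$ almost-parallel arcs of length $\asymp R$" rigorous: one must control the curvature of the spiral at the relevant scales to be sure that an $R$-ball really does meet that many coils and that the coils are genuinely $g$-separated and not, say, accidentally bunched by the angular winding. This is where the monotonic winding hypothesis does its work, since it forces the radial gaps to be monotone in the radius and thus prevents pathological bunching, and it lets one transfer the "decreasing gaps" estimate (in the spirit of the key formula \eqref{KEY} from the proof of Theorem \ref{sequencesmain}) from a single ray to a genuine two-dimensional neighbourhood. A secondary technical point is checking that the arc-length element $|(\phi(\alpha)e^{i\alpha})'|$ is comparable to $\phi(\alpha)$ on the relevant range, so that an arc subtending bounded angle near radius $\rho_0$ really has length $\asymp\rho_0$; this is immediate if $\phi'$ is small compared to $\phi$, which again follows from sub-exponential (indeed from $\phi$ being decreasing together with a mild regularity assumption, or can be arranged by passing to the convex differentiable case). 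Once these comparisons are in place the dimension count above is a routine covering estimate combined with Proposition \ref{BB}.
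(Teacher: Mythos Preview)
Your approach to the sub-exponential case differs from the paper's. The paper argues by contrapositive: assuming $\dim_{\mathrm A}S<2$, it centres the ball at the origin with radius $R$ and chooses $r$ so that $g^{-1}(r)=f^{-1}(R)$ (where $f(x)=\phi(2\pi x)$ and $g(x)=f(x)-f(x+1)$); then every gap inside $B(0,R)$ is at most $r$, the $r$-sausage of $S$ fills $B(0,R)$, and $N(B(0,R)\cap S,r)\gtrsim(R/r)^2$. The assumption $\dim_{\mathrm A}S<2$ forces $R/r\le M$ uniformly, which unwinds to $f(x+1)\le f(x)(1-1/M)$ for all large $x$, i.e.\ exponential decay. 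Your forward argument, centring the ball at a far-out point of $S$ and counting nearly-parallel arcs, is a reasonable alternative, but the claim ``$\asymp R/g$ arcs with gaps $\asymp g$'' is not immediate: the gaps in your ball range from some $g^-\le g$ on the inner side to $g^+\ge g$ on the outer side, and on the outer side (where the arcs are genuinely $r$-separated) the arc count is only $\gtrsim R/g^+$, which need not be $\gtrsim R/g$. What actually rescues your estimate is the \emph{inner} half of the ball, where gaps are $\le g=r$ and the $r$-sausage fills a region of area $\asymp R^2$---but this is precisely the paper's mechanism, relocated. Centring at the origin as the paper does avoids the detour entirely.

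There is a genuine gap in your treatment of the non-sub-exponential case. You claim that $\phi$ decreasing and not sub-exponential forces $\limsup_{\alpha\to\infty}\log\phi(\alpha)/\alpha<0$, hence uniform exponential decay; but ``not sub-exponential'' only says $\log\phi(\alpha)/\alpha\not\to 0$, and monotonicity of $\phi$ alone does not upgrade this to $\limsup<0$. The paper sidesteps the issue by running the dichotomy the other way: it shows $\dim_{\mathrm A}S<2\Rightarrow$ exponential decay $\Rightarrow$ bi-Lipschitz equivalence to a line segment $\Rightarrow\dim_{\mathrm A}S=1$, so that $\dim_{\mathrm A}S\in\{1,2\}$ always, with the exponential case isolated automatically. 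Your appeal to Theorem~\ref{seqdichotomy} for the radial sections is a nice idea, but it too needs genuine exponential decay of the gap sequence, which you have not established from ``not sub-exponential'' alone.
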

To prove this result we prove that if the Assouad dimension is strictly  less than 2, then the winding must be exponential.  Note that this is yet another proof of the fact that spirals with sub-exponential winding cannot be unwound to a line segment by a bi-Lipschitz homeomorphism, since such maps preserve Assouad dimension.

\begin{thm} \label{spiral}
Let $S$ be a convex differentiable spiral with monotonic winding  such that $\overline{\dim}_\mathrm{B} S>1$.  Then for all $\theta \in (0,1)$ we have
\[
\dim_{\mathrm{A}}^\theta S \, = \, \frac{\overline{\dim}_\mathrm{B} S}{1- \theta} \, \wedge \, 2.
\]
Note that such spirals must have sub-exponential winding.
\end{thm}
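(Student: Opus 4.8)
The plan is to mirror the proof of Theorem \ref{sequencesmain}. First, since $\overline{\dim}_\mathrm{B} S>1$ the winding cannot be exponential: Theorem \ref{assspiral} would then give $\dim_\mathrm{A} S=1<\overline{\dim}_\mathrm{B} S$. So the winding is sub-exponential and $\dim_\mathrm{A} S=2$, whence Proposition \ref{BB} already supplies $\overline{\dim}_\mathrm{B} S\le\dim_\mathrm{A}^\theta S\le\frac{\overline{\dim}_\mathrm{B} S}{1-\theta}\wedge 2$. Writing $B=\overline{\dim}_\mathrm{B} S$ (the case $B=2$ being immediate), it remains to prove the lower bound $\dim_\mathrm{A}^\theta S\ge\frac{B}{1-\theta}\wedge 2$, and by Corollary \ref{Con} and Corollary \ref{stayA} it suffices to do this for $\theta\in(0,1-B/2)$: equality there forces $\dim_\mathrm{A}^{1-B/2}S=2=\dim_\mathrm{A} S$ by continuity, and Corollary \ref{stayA} then propagates the value $2$ to all of $[1-B/2,1)$, matching $\frac{B}{1-\theta}\wedge 2$.

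The geometric core is an analogue of the key formula (\ref{KEY}). Writing $g(s)=s-\phi(\phi^{-1}(s)+2\pi)$ for the radial gap between consecutive strands of $S$ at modulus $s$, monotonic winding makes $g$ strictly increasing with $g(s)\le s$ and $g(s)\to0$ as $s\to0$; set $s^*(r)=g^{-1}(r)$. Using convex differentiability to control the curvature --- the spiral at modulus $\rho$ has radius of curvature $\asymp\rho$, so its arcs are essentially straight at all scales $\le\rho$ --- I would establish, for small $R$ and $0<r<R$,
\[
N\big(B(0,R)\cap S,\,r\big)\ \asymp\ \Big(\frac{R\wedge s^*(r)}{r}\Big)^{2}\ +\ \frac1r\int_{s^*(r)\wedge R}^{R}\frac{\rho}{g(\rho)}\,d\rho,
\]
the first term covering the disk $B(0,R\wedge s^*(r))$ in which the strands are $r$-dense, the second counting the $\asymp\int_{s^*(r)}^{R}g(\rho)^{-1}\,d\rho$ full loops at moduli in $[s^*(r),R]$, each of length $\asymp\rho$ and hence costing $\asymp\rho/r$ balls of radius $r$.

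Granting this, I would fix $\theta\in(0,1-B/2)$, put $r=R^{1/\theta}$, and fix auxiliary exponents $1<B_1<B_2<B$; by the explicit formula for the Assouad spectrum it is enough to produce, for $B_1,B_2$ arbitrarily close to $B$, a sequence $r\to0$ with $N(B(0,r^\theta)\cap S,r)\gtrsim r^{-B_1}$. There are three cases. (i) If $g(r^\theta)\le r$ for infinitely many $r$, then $s^*(r)\ge r^\theta$ and the formula gives $N(B(0,r^\theta)\cap S,r)\asymp r^{-2(1-\theta)}\gtrsim r^{-B_1}$, since $2(1-\theta)>B$. (ii) Otherwise $s^*(r)<r^\theta$ for all small $r$; if moreover $(s^*(r)/r)^2\ge r^{-B_1}$ infinitely often, the first term already gives the bound. (iii) In the remaining case $(s^*(r)/r)^2<r^{-B_1}$ for all small $r$, equivalently $g(t)>t^{\kappa}$ for all small $t$ with $\kappa=2/(2-B_1)>2$. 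Choosing $r_i\to0$ with $N(S,r_i)\ge r_i^{-B_2}$ (possible as $\overline{\dim}_\mathrm{B} S=B>B_2$), smallness of the first term forces $\frac1{r_i}\int_{s^*(r_i)}^{1}\rho\,g(\rho)^{-1}\,d\rho\gtrsim r_i^{-B_2}$, while the outer piece satisfies $\frac1{r_i}\int_{r_i^\theta}^{1}\rho\,g(\rho)^{-1}\,d\rho\le\frac1{r_i}\int_{r_i^\theta}^{1}\rho^{1-\kappa}\,d\rho\asymp r_i^{\theta(2-\kappa)-1}=o(r_i^{-B_2})$, the last step being the elementary inequality $\theta(2-\kappa)-1>-B_1$, which is equivalent (via $\kappa=2/(2-B_1)$) to $\theta<1-B_1/2$ and hence holds. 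Thus the inner piece $\frac1{r_i}\int_{s^*(r_i)}^{r_i^\theta}\rho\,g(\rho)^{-1}\,d\rho\gtrsim r_i^{-B_2}$, and the formula with $R=r_i^\theta$ yields $N(B(0,r_i^\theta)\cap S,r_i)\gtrsim r_i^{-B_2}\ge r_i^{-B_1}$. In all cases $\dim_\mathrm{A}^\theta S\ge B_1/(1-\theta)$, and letting $B_1\nearrow B$ completes the argument.

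The hard part will be the key formula itself: one must verify that for moduli $\rho\ge s^*(r)$ the spiral decomposes, up to bounded multiplicative error, into $\asymp\int g(\rho)^{-1}\,d\rho$ pairwise $r$-separated arcs of length $\asymp\rho$, each requiring $\asymp\rho/r$ balls of radius $r$, while for $\rho<s^*(r)$ it $r$-densely fills the disk of radius $s^*(r)$. This is exactly where convex differentiability (bounding the curvature) and monotonic winding (bounding the separation of consecutive strands, and guaranteeing that in case (iii) the outer annulus $\{r^\theta\le|z|\le1\}$ is genuinely curve-like at scale $r$) are needed. Once the formula is available, the remaining steps are bookkeeping parallel to the proof of Theorem \ref{sequencesmain}.
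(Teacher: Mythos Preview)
Your overall plan---reduce to $\theta\in(0,1-B/2)$ via Proposition~\ref{BB} and Corollary~\ref{stayA}, then split on the size of the dense disc $\{|z|\le s^*(r)\}$---is correct and is exactly the paper's route; your case~(iii) hypothesis $g(\rho)>\rho^{\kappa}$ with $\kappa=2/(2-B_1)$ is precisely the residual condition the paper isolates.

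The gap is the $\gtrsim$ direction of your integral key formula. The covering number is genuinely governed by the \emph{sum} $\tfrac1r\sum_n f(n)$ over loop radii $f(n)=\phi(2\pi n)$, and your integral $\tfrac1r\int\rho/g(\rho)\,d\rho$ agrees with it only if $|\phi'|\asymp g$ along the spiral. Convex differentiability gives only one side of this: by the mean value theorem and convexity of $\phi$, one has $g(x)\le|f'(x)|$ but no matching upper bound unless consecutive gaps are comparable, so the integral can strictly majorise the sum. Consequently, in case~(iii) you cannot pass from ``inner integral $\gtrsim r_i^{-B_2}$'' to ``$N(B(0,r_i^{\theta})\cap S,r_i)\gtrsim r_i^{-B_2}$''. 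The paper never attempts such an integral lower bound. For the lower bound on $N(B(0,r^{\theta})\cap S,r)$ it uses only the elementary triangular estimate $N\gtrsim\big(g^{-1}(r)-f^{-1}(r^{\theta})\big)^{2}$, which follows from monotone winding alone (successive inner-loop radii increase by more than $r$, so their sum is $\gtrsim r\cdot(\text{\#\,loops})^{2}$). Convex differentiability enters the paper's proof not through curvature---loop lengths are already $\asymp\rho$ from monotonicity---but to pass from the case~(iii) inequality $g>f^{\kappa}$ to the differential inequality $-f'>f^{\kappa}$ and integrate it, yielding $f(n)\lesssim n^{-1/(\kappa-1)}$; this bound controls both the length term in the upper estimate for $N(S,r)$ and the outer-loop count $f^{-1}(r^{\theta})$, after which the triangular estimate finishes. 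Your subtraction argument can be salvaged by working with sums throughout and bounding the outer sum via this same $f$-bound, at which point you have essentially reproduced the paper's proof.
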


Clearly any spiral $S$ is homeomorphic to the unit line segment $[0,1]$.  Let $f$ be a homeomorphism between these two sets and suppose that $f$ is also bi-H\"older  with parameters $\beta\geq 1\geq\alpha>0$, i.e. for all $x,y \in S$ we have
$$|x-y|^\beta \ \lesssim \  |f(x)-f(y)| \ \lesssim \  |x-y|^\alpha.$$
It follows immediately from the standard results for box dimension, that
\[
\frac{1}{\beta} \, \overline{\dim}_\mathrm{B}S    \    \leq \  \overline{\dim}_\mathrm{B} f(S) \, = \, \overline{\dim}_\mathrm{B} [0,1] \, = \, 1 
\]
and so, provided the spiral has upper box dimension larger than its topological dimension,  $f$ must be quantitatively far away from being bi-Lipschitz.  In particular, we require
\[
\beta \,  \geq \,  \overline{\dim}_\mathrm{B}S \, > \, 1.
\]
Observe that if we consider Hausdorff dimension here, then  we get no information on $\beta$ because the Hausdorff dimension of \emph{any} spiral is 1.  Fortunately, we can get more information if we consider the Assouad dimension.

\begin{cor} \label{spiralcor}
Let $f$ be a bi-H\"older homeomorphism with parameters $\beta\geq 1\geq\alpha>0$ mapping a convex differentiable spiral $S$ with sub-exponential and monotonic winding to a line segment.  Also, assume that $\overline{\dim}_\mathrm{B}S  >  1$.   Then
\[
\beta  \ \geq \   \alpha \, + \,  \overline{\dim}_\mathrm{B}S  \, \left(1-\frac{\alpha}{2}\right)  \  \geq \ (1+\alpha/2) \vee \overline{\dim}_\mathrm{B}S  \, >  \, 1
\]
and,  if $\alpha = 1$, then
\[
\beta \,  \geq \,  1+ \frac{\overline{\dim}_\mathrm{B}S}{2}.
\]
\end{cor}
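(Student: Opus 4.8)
The plan is to combine the exact formula for the Assouad spectrum of $S$ from Theorem~\ref{spiral} with the bi-H\"older distortion estimate of Proposition~\ref{Holderthm}. Write $d=\overline{\dim}_\mathrm{B}S$; since $S\subseteq\mathbb{R}^2$ and $\overline{\dim}_\mathrm{B}S>1$ by hypothesis, $d\in(1,2]$, and the hypotheses of Theorem~\ref{spiral} are precisely those assumed here, so $\dim_\mathrm{A}^\psi S=\frac{d}{1-\psi}\wedge 2$ for all $\psi\in(0,1)$. Since $f(S)=[0,1]$ and $\overline{\dim}_\mathrm{B}[0,1]=\dim_\mathrm{A}[0,1]=1$, Proposition~\ref{BB} gives $\dim_\mathrm{A}^\theta f(S)=1$ for every $\theta\in(0,1)$. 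Applying the left-hand inequality of Proposition~\ref{Holderthm} to the map $f$ and the set $S$ (both $S\subseteq\mathbb{R}^2$ and $[0,1]\subseteq\mathbb{R}$ are doubling), one obtains for every $\theta\in(0,1)$
\[
\frac{1-\frac{\beta}{\alpha}\theta}{\beta(1-\theta)}\,\dim_\mathrm{A}^{\frac{\beta}{\alpha}\theta}S\ \leq\ \dim_\mathrm{A}^\theta f(S)\ =\ 1 .
\]

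This carries information only when $\frac{\beta}{\alpha}\theta<1$, so I would restrict to that range and reparametrise by $\psi=\frac{\beta}{\alpha}\theta\in(0,1)$, giving $\theta=\frac{\alpha}{\beta}\psi$, $1-\frac{\beta}{\alpha}\theta=1-\psi$ and $1-\theta=1-\frac{\alpha}{\beta}\psi$. The displayed inequality then reads $\dim_\mathrm{A}^\psi S\le\frac{\beta-\alpha\psi}{1-\psi}$; substituting the formula from Theorem~\ref{spiral} and multiplying through by $1-\psi>0$ yields
\[
d\wedge 2(1-\psi)\ \leq\ \beta-\alpha\psi\qquad\text{for all }\psi\in(0,1) ,
\]
that is, $\beta\ge\alpha\psi+\bigl(d\wedge 2(1-\psi)\bigr)$ for all $\psi\in(0,1)$. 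The right-hand side equals $\alpha\psi+d$ (increasing) for $\psi\le 1-\tfrac{d}{2}$ and $2-(2-\alpha)\psi$ (decreasing, since $\alpha\le 1<2$) for $\psi\ge 1-\tfrac{d}{2}$, so its supremum over $\psi\in(0,1)$ is $\alpha\bigl(1-\tfrac{d}{2}\bigr)+d$, attained at $\psi=1-\tfrac{d}{2}$ when $d<2$ and approached as $\psi\downarrow 0$ when $d=2$. Hence $\beta\ge\alpha+\overline{\dim}_\mathrm{B}S\bigl(1-\tfrac{\alpha}{2}\bigr)$.

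It remains to verify the elementary chain $\alpha+d\bigl(1-\tfrac{\alpha}{2}\bigr)\ge(1+\tfrac{\alpha}{2})\vee d>1$: the bound $\alpha+d\bigl(1-\tfrac{\alpha}{2}\bigr)\ge d$ rearranges to $\alpha\bigl(1-\tfrac{d}{2}\bigr)\ge 0$, which holds since $d\le 2$; the bound $\alpha+d\bigl(1-\tfrac{\alpha}{2}\bigr)\ge 1+\tfrac{\alpha}{2}$ rearranges to $(d-1)\bigl(1-\tfrac{\alpha}{2}\bigr)\ge 0$, which holds since $d>1$ and $\alpha\le 1$; and $(1+\tfrac{\alpha}{2})\vee d>1$ since $\alpha>0$. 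Putting $\alpha=1$ in the main bound gives $\beta\ge 1+\tfrac{d}{2}=1+\tfrac{\overline{\dim}_\mathrm{B}S}{2}$, as claimed. I do not expect any serious obstacle: the substantive content is already in Theorem~\ref{spiral}, and what remains is one application of Proposition~\ref{Holderthm} together with a one-variable optimisation; the only point requiring a little care is that optimisation in the boundary case $\overline{\dim}_\mathrm{B}S=2$, where the optimal $\psi=1-\tfrac{d}{2}$ lies outside $(0,1)$ and must be handled by a limit.
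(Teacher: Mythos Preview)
Your proof is correct and essentially the same as the paper's: the paper invokes Theorem~\ref{Holderassouadcor} (itself proved by applying Proposition~\ref{Holderthm} at the single value $\theta=\frac{\alpha}{\beta}\theta_0$) and then substitutes $\theta_0=1-\overline{\dim}_\mathrm{B}S/2$ from Theorem~\ref{spiral}, while you apply Proposition~\ref{Holderthm} directly and optimise over $\psi$, landing on exactly the same optimal parameter $\psi=1-d/2=\theta_0$. Your version is slightly more self-contained, and you also supply the verification of the elementary chain $\alpha+d(1-\alpha/2)\geq(1+\alpha/2)\vee d>1$, which the paper leaves to the reader.
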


\begin{proof}
It follows from Theorem \ref{Holderassouadcor} that
\[
\dim_\mathrm{A} f(S) \  \geq \  \frac{\dim_\mathrm{A} S}{\beta-\theta_0\alpha}(1-\theta_0)
\]
where
\[
\theta_0=\inf\left\{ \theta\in [0,1] : \dim_\mathrm{A}^{\theta} S=\dim_\mathrm{A} S \right\} = 1 - \frac{\overline{\dim}_\mathrm{B}S}{2}
\]
by  Theorem \ref{spiral}.  Therefore, by Theorem \ref{spiral}, we have
\[
 1 \ \geq  \  \frac{2}{\beta-\alpha\left(1 - \frac{\overline{\dim}_\mathrm{B}S}{2} \right)} \frac{\overline{\dim}_\mathrm{B}S}{2} 
\]
and solving for $\beta$ yields
\[
\beta  \ \geq \   \alpha \, + \,  \overline{\dim}_\mathrm{B}S  \, \left(1-\frac{\alpha}{2}\right) 
\]
as required.
\end{proof}

Note that Corollary \ref{spiralcor} gives strictly better information than we get directly from the upper box dimension, provided the upper box dimension of $S$ is strictly less than 2.  Otherwise, both estimates reduce to $\beta \geq 2$.

For the purpose of the following proofs, define $f(x)=\phi(2\pi x)$ and $g(x)=f(x)-f(x+1)$.  By definition $f$ is decreasing and monotonic winding guarantees that $g$ is also decreasing.  Clearly  $g(x)<f(x)$ for any $x\geq 0$.  We will refer to the $\delta$-neighbourhood of $S$ (or part of $S$)  as a $\delta$-\emph{sausage} for $\delta>0$.

\subsection{Proof of Theorem \ref{assspiral}}

  Let $0<r<R<g(0)<1$   and observe that there is a unique $x_r>0$ such that $g(x_r)=r$, and a  unique $x_R>0$ such that $f(x_R)=R$.  If $x_r<x_R$, we see that the $r$-sausage of $S$ will  completely cover the ball $B(0,R)$, thus the number of $r$-balls needed to cover $B(0,R)\cap S$ is $ \gtrsim (R/r)^2$.    If $x_r>x_R$, the $r$-sausage of $S$ will completely cover a ball smaller than $B(0,R)$. It is easy to see that the smaller ball can be taken to be $B(0,f(x_r))$, thus we need $\gtrsim (f(x_r)/r)^2$ many $r$-balls to cover $B(0,R)\cap S$.

Since $g$ and $f$ are continuous decreasing functions and $g<f$ we can deduce that the inverse functions $g^{-1},f^{-1}$ are also decreasing and $g^{-1}<f^{-1}$ (on the appropriate domain).  Therefore $g^{-1}(R)<f^{-1}(R)$ and there exists a unique $r\in (0,R)$ such that $g^{-1}(r)=f^{-1}(R)$. We see that the number of  $r$-balls needed to cover $B(0,R)\cap S$ is 
$$ \gtrsim \left( f(g^{-1}(r))/r\right)^2=(R/r)^2$$
Therefore, if the Assouad dimension of $S$ is strictly smaller than $2$, then it must be true that for all small enough $R>0$, the $r$ defined above must be such that $R/r$ is uniformly bounded from above. This reasoning is similar to the case of decreasing sequences.  Suppose there exists $M>1$ such that  $R/r<M$ for all small enough $R$ and the $r$ chosen above.  Then we conclude that
$$g^{-1}(R)<g^{-1}(r)=f^{-1}(R)<g^{-1}(R/M)$$
and so, applying $g$ throughout, we get 
$$R>g(f^{-1}(R))>R/M.$$
Observe  that $f^{-1}(R)=x_R$ and so
\[
f(x_R) > g(x_R) > f(x_R)/M
\]
and subtracting $f(x_R)$ throughout and taking negatives yields
$$f(x_R+1)<f(x_R)(1-1/M).$$
In particular this holds for sufficiently small $R>0$ and so by continuity of $f$ we can deduce that
$$f(x+1)<f(x)(1-1/M)$$ for any large enough $x$.
From here it is clear that
\[
\lim_{x\rightarrow \infty}|\log f(x)/x|>0
\]
because the limit holds for integral $x$, and since $f(x)$ is a decreasing function the limit holds in general. Therefore $f$, and hence $\phi$, is at least exponential and since spirals with exponential winding are bi-Lipschitz equivalent to a line segment we deduce that $\dim_\mathrm{A} S = 1$.

\subsection{Proof of Theorem \ref{spiral}}

 Denote the upper box dimension of $S$ by $B$ and recall that by assumption and Theorem \ref{assspiral} we know $1< B\leq 2 = \dim_\mathrm{A} S$.  We will prove that
\[
\dim_{\mathrm{A}}^{\theta} S\geq \frac{B}{1-\theta}
\]
for $0< \theta < 1-B/2$ which, combined with Proposition \ref{BB} and Corollary \ref{stayA}, proves the result.   Fix such a $\theta$. If we can find a sequence of $r_i\to 0$ such that
\[
g^{-1}(r_i)\leq f^{-1}(r_i^{1-B^-/2})
\]
then we would have:
$$r_i^{-B^-}\lesssim \left( f(g^{-1}(r_i))/(r_i)\right)^2\lesssim N(B(0,r_i^\theta)\cap S,r_i)$$
which proves the result.  Therefore assume that for all $r>0$ small enough we have the inequality
$$g^{-1}(r)>f^{-1}(r^{1-B^-/2})$$
which also implies that for all $r>0$ small enough
$$g^{-1}(r)>f^{-1}(r^{\theta})$$
since $\theta < 1-B/2$ and $f^{-1}$ is decreasing.  This also implies  that $f(x)<g(x)^{1-B^-/2}$ for all large enough $x$.  We assumed that $f(x)$ is differentiable and convex and therefore for all large enough $x>0$ we have by the mean value theorem that
$$f(x)^{\frac{1}{1-B^-/2}}<g(x)=f(x)-f(x+1)=-f'(\zeta)<-f'(x)$$
where $\zeta \in [x,x+1]$.  In fact it is true that for a suitable constant $C>0$ and all $x>0$ we have
$$Cf(x)^{\frac{1}{1-B^-/2}}<-f'(x).$$
For simplicity we write $\alpha = \frac{1}{1-B^-/2}>1$.  We have
$$-C>\frac{f'(x)}{f(x)^{\alpha}}=\frac{1}{1-\alpha}\left(f(x)^{1-\alpha}\right)'$$
which gives
$$\left(f(x)^{1-\alpha}\right)'>C(\alpha-1).$$
Integrating both sides of this inequality yields
$$f(x)^{1-\alpha}-f(0)^{1-\alpha}>C(\alpha-1)x$$
and this implies for all large enough $x$ that
\begin{equation} \label{nicespiralbound}
f(x)<\left(\frac{1}{C(\alpha-1)x+f(0)^{1-\alpha}}\right)^{\frac{1}{\alpha-1}}\lesssim x^{-\frac{1}{\alpha-1}}=x^{-\frac{1-B^-/2}{B^-/2}}
\end{equation}
and therefore
\begin{equation} \label{nicespiralbound-1}
f^{-1}(x)\lesssim x^{-\frac{B^-/2}{1-B^-/2}}.
\end{equation}
We now move towards bounding $N(S\cap B(x,r^\theta),r)$. First of all we need
\[
\gtrsim\left( f(g^{-1}(r))/(r)\right)^2
\]
 many $r$-balls to cover
\[
\{ \phi(\alpha)\exp(i \alpha) \, : \, \alpha \in [2\pi g^{-1}(r),\infty) \}
\]
because the $r$-sausage will cover $B(0,g^{-1}(r))$ completely.  Now consider the subset of $S$ given by
\[
\{ \phi(\alpha)\exp(i \alpha) \, : \, \alpha \in \left(2\pi f^{-1}(r^\theta), 2\pi g^{-1}(r)\right)\} ,
\]
and decompose part of this set into the disjoint union of the sets
\[
S_m=\{ \phi(\alpha)\exp(i \alpha) \, : \, \alpha \in (2\pi m,2\pi (m+1) ) \},
\]
 over integers $[f^{-1}(r^\theta)]+1\leq m\leq [g^{-1}(r)]$, where $[x]$ denotes the integer part of $x >0$.   The projection of each $S_m$ onto the real axis contains  an interval of length $f(m)$, so to cover $S_m$ we need at least $ \gtrsim f(m)/r$ many $r$-balls. Since distinct sets $S_m$ are at least $r$ separated, in order to cover all the sets in the above union we need at least
$$ \gtrsim \  \frac{1}{r}\sum_{k=1}^{[g^{-1}(r)]-[f^{-1}(r^\theta)]}f([f^{-1}(r^\theta)]+k)$$
many $r$-balls.   The sequence $f([f^{-1}(r^\theta)]+k)$ for $k=1, \dots, [g^{-1}(r)]-[f^{-1}(r^\theta)]$ is decreasing and  the minimum of the sequence  is $f([g^{-1}(r)])\geq f(g^{-1}(r)) \geq r$. Since $g$ is also decreasing we see that
\[
f([f^{-1}(r^\theta)]+k)-f([f^{-1}(r^\theta)]+k+1) \ \geq \  r
\]
 for $k=1, \dots, [g^{-1}(r)]-[f^{-1}(r^\theta)]-1$.  Therefore we have
\[
f([f^{-1}(r^\theta)]+[g^{-1}(r)]-[f^{-1}(r^\theta)]-k) \ \geq \   (k+1) \, r
\]
and applying this inequality to the above sum yields
\begin{eqnarray*}
\sum_{k=1}^{[g^{-1}(r)]-[f^{-1}(r^\theta)]}f([f^{-1}(r^\theta)]+k)  &\geq & \sum_{k=1}^{[g^{-1}(r)]-[f^{-1}(r^\theta)]} k  r \\ \\
&=& \frac{1}{2}([g^{-1}(r)]-[f^{-1}(r^\theta)])([g^{-1}(r)]-[f^{-1}(r^\theta)]+1) \, r\\ \\
&\gtrsim & (g^{-1}(r)-f^{-1}(r^\theta))^2 r
\end{eqnarray*}
where, in particular, the last $\gtrsim$ holds for all $r\to 0$. Therefore we have
\begin{equation} \label{funnybound} 
N(B(0,r^\theta)\cap S,r)\gtrsim (g^{-1}(r)-f^{-1}(r^\theta))^2.
\end{equation}
Now we will take into consideration the box dimension of the spiral.  We have
\[
N(S,r)\lesssim\left( f(g^{-1}(r))/(r)\right)^2+L/r
\]
where $L$ is the length of the rectifiable part of the spiral corresponding to angles $0$ to $2\pi g^{-1}(r)$.  We may bound $L$ from above by employing the classical length formula:
\begin{eqnarray*}
L &=& \int_0^{2\pi g^{-1}(r)}\sqrt{\phi^2+\dot{\phi}^2} \ d\alpha \\ \\
&=& \int_{K_1}\sqrt{\phi^2+\dot{\phi}^2} \ d\alpha \ + \  \int_{K_2} \sqrt{\phi^2+\dot{\phi}^2} \ d\alpha
\end{eqnarray*}
where $K_1=\{ \alpha \in (0, 2\pi g^{-1}(r)) \ : \  |\phi|>|\dot{\phi}|\}$, $K_2=\{\alpha \in (0, 2\pi g^{-1}(r)) \ : \  |\phi|\leq |\dot{\phi}|\}$. By splitting the integral in this way we obtain
\begin{eqnarray*}
L&\leq& \sqrt{2}\int_{K_1}\phi \, d\alpha \ - \ \sqrt{2}\int_{K_2}\dot{\phi} \,  d\alpha \\ \\
&\leq&   \sqrt{2}\int_0^{2\pi g^{-1}(r)}\phi \,  d\alpha \ + \ \sqrt{2} \,  \phi(0)\\ \\
&\leq&   \sqrt{2}\sum_{k=0}^{[g^{-1}(r)+1]} f(k) \ + \ \sqrt{2}
\end{eqnarray*}
where the last inequality comes from the fact that $f$ is decreasing. Therefore
$$N(S,r) \ \lesssim \ \left( f(g^{-1}(r))/(r)\right)^2 \ + \  \sum_{k=0}^{[g^{-1}(r)+1]} f(k)/r\ + \ 1/r.$$
Using (\ref{nicespiralbound}) we can bound the middle term above by
$$\sum_{k=0}^{[g^{-1}(r)+1]} f(k)/r \  \lesssim \  \sum_{k=0}^{[g^{-1}(r)+1]} k^{-\frac{1-B^-/2}{B^-/2}}/r \  \lesssim  \  r^{-1}g^{-1}(r)^{1-\frac{1-B^-/2}{B^-/2}}.$$ 
Therefore 
$$N(S,r) \ \lesssim \  \left( f(g^{-1}(r))/(r)\right)^2+r^{-1}g^{-1}(r)^{1-\frac{1-B^-/2}{B^-/2}}+1/r.$$
Since the (upper) box dimension of $S$ is $B$ we can find a sequence of $r_i\to 0$ such that
$$\left( f(g^{-1}(r_i))/(r_i)\right)^2+r_i^{-1}g^{-1}(r_i)^{1-\frac{1-B^-/2}{B^-/2}}+1/r_i \ \gtrsim \  N(S,r_i) \ \gtrsim \  r_i^{-B^-}.$$ 
Since $B>1$ we can assume $B^->1$, and then either the first term or the second term is $\gtrsim r_i^{-B^-}$ for infinitely many $i$.  If this is true for the first term then by our initial observation we have $\dim_{\mathrm{A}}^{\theta} S\geq \frac{B^-}{1-\theta}$.  If this is true for the second term, then we deduce that for infinitely many $i$ we have
$$g^{-1}(r_i)\gtrsim r_i^{-B^-/2}.$$
Recalling the lower bound (\ref{funnybound}) and the assumption that $\theta<1-B^-/2$, we conclude that
\begin{eqnarray*}
N(B(0,r_i^\theta)\cap S,r_i) &\gtrsim & (g^{-1}(r_i)-f^{-1}(r^\theta_i))^2 \\ \\
 &\gtrsim & \left(r_i^{-B^-/2}-r_i^{-\theta\frac{B^-/2}{1-B^-/2}}\right)^2  \qquad \text{by (\ref{nicespiralbound-1})}\\ \\
 &\gtrsim & r_i^{-B^-}
\end{eqnarray*}
which completes the proof.

\section{An example with non-monotonic spectra} \label{NON-MONO2}

 In this section we construct sets whose spectra exhibit strange properties.  In particular, we prove that the spectra are not necessarily monotone; monotonicity can be broken infinitely many times; and, both the Assouad and lower spectra can have infinitely many phase transitions, i.e. points where they fail to be differentiable.

Given an interval $I$ of length $L<1$ and numbers $\alpha>\beta>1$, we construct a set $F_{\alpha,\beta}\subset I$ via the following inductive procedure:
\begin{itemize}
\item[1st step.] We pack closed intervals of length $L^{\alpha}$ with gaps of length $L^{\beta}$ inside $L$.  The particular way of packing does not matter as long as it is by an optimal number.  We let the union of the closed intervals of length $L^{\alpha}$ be denoted by $I_1$.
\item[2nd step.] For each interval of length $L^{\alpha}$ appearing at the first step, we optimally  pack  intervals of length $L^{\alpha^2}$ with gaps of length $L^{\alpha\beta}$.  We let the union of the closed intervals of length $L^{\alpha^2}$ be denoted by $I_2$.
\item[$k$th step.] For each interval of length $L^{\alpha^{k-1}}$ appearing at the $(k-1)$th step, we optimally  pack  intervals of length  $L^{\alpha^k}$with gaps of length $L^{\alpha^{k-1}\beta}$.  We let the union of the closed intervals of length $L^{\alpha^k}$ be denoted by $I_k$.
\end{itemize}
We obtain a nested sequence of compact sets  $I\supset I_1\supset I_2\supset I_3 \supset \dots$ and finally we let
\[
F_{\alpha,\beta}=\bigcap_{k=1}^\infty  I_k
\]
which is a non-empty compact set.  We can compute the dimensions of $F_{\alpha,\beta}$, but we leave the details to the reader.  In particular, 
$$\dim_{\mathrm{L}} F_{\alpha,\beta} \ = \ 0 \ < \  \underline{\dim}_\mathrm{B} F_{\alpha,\beta} \ = \ \frac{\beta-1}{\alpha-1}  \ < \   \overline{\dim}_\mathrm{B} F_{\alpha,\beta} \ = \ \frac{\beta-1}{\alpha-1}\frac{\alpha}{\beta} \  < \   \dim_{\mathrm{A}} F_{\alpha,\beta} \ = \ 1.$$
The exact computation of the spectrum is complicated, but nevertheless we can get some information  without much effort.

\begin{lma} \label{L1}
Whenever $\log_{\alpha} \frac{1}{\theta}$ is an integer, we have
$$\dim_{\mathrm{L}}^{\theta} F_{\alpha,\beta}=\frac{\beta-1}{\alpha-1} =  \underline{\dim}_\mathrm{B} F_{\alpha,\beta}$$
and
$$\dim_{\mathrm{A}}^{\theta} F_{\alpha,\beta}=\frac{\beta-1}{\alpha-1}\frac{\alpha}{\beta} =  \overline{\dim}_\mathrm{B} F_{\alpha,\beta}.$$
In particular, the Assouad/lower spectrum is equal to the  upper/lower box dimension infinitely often.  This is the general lower/upper bound from Proposition \ref{BB}/\ref{BBL}.
\end{lma}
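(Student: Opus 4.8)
The bounds $\dim_{\mathrm L}^{\theta}F_{\alpha,\beta}\le\underline{\dim}_{\mathrm B}F_{\alpha,\beta}$ and $\dim_{\mathrm A}^{\theta}F_{\alpha,\beta}\ge\overline{\dim}_{\mathrm B}F_{\alpha,\beta}$ hold for every $\theta\in(0,1)$ by Propositions \ref{BBL} and \ref{BB}, so the only work is to establish the reverse inequalities under the hypothesis that $m:=\log_{\alpha}(1/\theta)$ is a positive integer, equivalently $R^{1/\theta}=R^{\alpha^{m}}$. The point of this hypothesis is that $R$ and $R^{1/\theta}$ then sit in \emph{corresponding} positions relative to the scales occurring in the construction. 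Write $\ell_{j}=L^{\alpha^{j}}$ for the length of a $j$th level interval and $g_{j}=L^{\alpha^{j-1}\beta}$ for a $j$th level gap; since $1<\beta<\alpha$ these interleave in decreasing order as $\cdots>g_{j}>\ell_{j}>g_{j+1}>\ell_{j+1}>\cdots$, and $t\mapsto t^{\alpha^{m}}$ carries the block of scales lying between two consecutive breakpoints onto the block lying between the two breakpoints $m$ levels further down the list. This is precisely what will make the exponents below independent of $m$.

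Fix $x\in F_{\alpha,\beta}$ and a small $R$. Every point of $F_{\alpha,\beta}$ lies inside a $j$th level interval for every $j$, and a short check shows that, up to a universal constant factor in every covering and packing number, $B(x,R)\cap F_{\alpha,\beta}$ is contained in --- and, for the lower spectrum, also contains --- one of the following pieces of the construction: (A) if $\ell_{j}\le R\le g_{j}$, a single $j$th level interval $J$ (indeed $B(x,R)\supseteq J$ for the interval $J\ni x$, since $R\ge\ell_{j}=|J|$, while $R\le g_{j}$ prevents $B(x,R)$ from reaching another $j$th level interval); (B) if $g_{j+1}\le R\le\ell_{j}$, a sub-window of a $j$th level interval meeting $\asymp R/g_{j+1}$ consecutive $(j+1)$st level intervals and containing at least one of them in full (because $R\ge g_{j+1}>\ell_{j+1}$). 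Each $j$th level interval is a rescaled copy of $F_{\alpha,\beta}$ with the base $L$ replaced by $\ell_{j}$, so the covering (or packing) number of such a piece at the scale $R^{1/\theta}=R^{\alpha^{m}}$ is obtained by counting the appropriate level intervals of a rescaled copy --- a finite product of the elementary `children-per-parent' counts $\asymp\ell_{i-1}/g_{i}$ already underlying the stated values of $\underline{\dim}_{\mathrm B}$ and $\overline{\dim}_{\mathrm B}$.

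Carrying out this count and dividing by $\log(R/R^{1/\theta})=(\tfrac1\theta-1)(-\log R)>0$, the $(\alpha^{m}-1)$ factors cancel and one is left with
\[
\frac{\log N\big(B(x,R)\cap F_{\alpha,\beta},\,R^{1/\theta}\big)}{\log(R/R^{1/\theta})}=\frac{\beta-1}{(\alpha-1)\sigma}\qquad\text{in regime (A), where }\sigma:=\frac{-\log R}{-\log\ell_{j}}\in\big[\tfrac{\beta}{\alpha},1\big],
\]
and $=1-\dfrac{\alpha^{j}(\alpha-\beta)}{(\alpha-1)\rho}$ in regime (B), where $\rho:=\dfrac{-\log R}{-\log L}\in[\alpha^{j},\alpha^{j}\beta]$; the same computation with the packing number $M$ in place of $N$ (using the full $(j+1)$st level sub-interval that $B(x,R)$ contains) yields a matching two-sided estimate. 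Both expressions are monotone in their parameter and, as $R$ runs through the regime, sweep out exactly the interval $\big[\tfrac{\beta-1}{\alpha-1},\ \tfrac{\beta-1}{\alpha-1}\cdot\tfrac{\alpha}{\beta}\big]$, attaining the maximum $\overline{\dim}_{\mathrm B}F_{\alpha,\beta}$ precisely at the gap scales $R=g_{j}$ and the minimum $\underline{\dim}_{\mathrm B}F_{\alpha,\beta}$ precisely at the interval scales $R=\ell_{j}$. Since the errors hidden in the $\asymp$'s are lower order and disappear after taking logarithms and letting $R\to0$, this shows at once that $\sup_{x}(\cdots)\le\overline{\dim}_{\mathrm B}F_{\alpha,\beta}+o(1)$ with equality along $R=g_{j}\to0$, hence $\dim_{\mathrm A}^{\theta}F_{\alpha,\beta}=\overline{\dim}_{\mathrm B}F_{\alpha,\beta}$, and that $\inf_{x}(\cdots)\ge\underline{\dim}_{\mathrm B}F_{\alpha,\beta}-o(1)$ with equality along $R=\ell_{j}\to0$, hence $\dim_{\mathrm L}^{\theta}F_{\alpha,\beta}=\underline{\dim}_{\mathrm B}F_{\alpha,\beta}$.

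The main obstacle is the bookkeeping in the middle step: identifying, uniformly in the position of $x$ inside its level intervals --- in particular when $x$ sits at an interval endpoint, which is the extremal configuration for the lower spectrum --- exactly which window $B(x,R)\cap F_{\alpha,\beta}$ is sandwiched between, then counting level intervals correctly through the several nested construction scales separating $R$ from $R^{\alpha^{m}}$, and finally checking both that regimes (A) and (B) agree at their common endpoint $R=\ell_{j}$ and that none of the accumulated $O(1)$ and boundary corrections affect the limiting exponents. One could instead treat $m=1$ first and then chain through the intermediate scales $R,R^{\alpha},\dots,R^{\alpha^{m}}$, each consecutive pair being in the relation $\theta=1/\alpha$; but since the cancellation above makes the general-$m$ computation no harder, this detour is unnecessary.
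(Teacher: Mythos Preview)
Your approach is essentially the same as the paper's: both split the range of $R$ into the two regimes determined by whether $R$ lies between an interval scale and a gap scale or vice versa, count sub-intervals through the $m$ nested levels to obtain $N\big(B(x,R)\cap F_{\alpha,\beta},R^{1/\theta}\big)$ up to constants, and read off the limiting exponent. Your parameterisation by $\sigma$ and $\rho$ and the explicit verification that the resulting ratio sweeps $\bigl[\tfrac{\beta-1}{\alpha-1},\tfrac{\alpha(\beta-1)}{\beta(\alpha-1)}\bigr]$ is slightly more polished than the paper's closing line ``applying the definition of $k$ yields the desired result'', but the substance is identical.
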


\begin{proof}
Suppose $\log_{\alpha} \frac{1}{\theta}=m$ is an integer.  For any $R\in (0,L)$, there is a unique integer $k \geq 1$ such that $$L^{\alpha^k}\leq R< L^{\alpha^{k-1}}.$$  
In particular, this means that
$$L^{\alpha^{k+m}}\leq R^{1/\theta}< L^{\alpha^{k-1+m}}.$$
There are two different cases to consider:
\begin{enumerate}
\item {$L^{\alpha^k}\leq R< L^{\alpha^{k-1}\beta}$ and $L^{\alpha^{k+m}}\leq R^{1/\theta}< L^{\alpha^{k-1+m}\beta}$}.

For any $x\in F_{\alpha,\beta}$, the ball $B(x,R)$  contains one $L^{\alpha^k}$ interval, and any $R^{1/\theta}$-ball can cover at most one ${L^{\alpha^{k+m}}}$ interval because the distance between  two disjoint intervals in the construction is $L^{\alpha^{k-1+m}\beta}$.  By construction, in each $L^{\alpha^k}$ interval the number of intervals of length $L^{\alpha^{k+m}}$ is
$$\asymp \ [L^{\alpha^k(1-\beta)}][L^{\alpha^{k+1}(1-\beta)}][L^{\alpha^{k+2}(1-\beta)}]\dots[L^{\alpha^{k+m}(1-\beta)}]$$
where the $[.]$ denotes the integer part. Therefore, the above argument shows that
$$N(B(x,R),R^{1/\theta})\lesssim L^{(\alpha^k+\alpha^{k+1}+\alpha^{k+2}+\dots+\alpha^{k+m-1})(1-\beta)}=L^{\alpha^k\frac{1-\beta}{1-\alpha}(1-\alpha^m)} .$$
We now derive a  lower bound for $N(B(x,R),R^{1/\theta})$.  Recall we only need the behaviour of
\[
N(B(x,R),R^{1/\theta})
\]
when $R$ is sufficiently small.  Choose $R$ so small such that $k$  will satisfy
$$[L^{\alpha^{k}(1-\beta)}]>cL^{\alpha^{k}(1-\beta)}$$
where $c>\frac{1}{2^{1/m}}$.  This is possible because $[x]\geq x(1-1/x)$ for any positive $x$, and for $x$  large enough we have $1-1/x>\frac{1}{2^{1/m}}$.  Then for all small enough $R>0$ we have $$N(B(x,R),R^{1/\theta})\gtrsim c^m L^{\alpha^k\frac{1-\beta}{1-\alpha}(1-\alpha^m)}\geq \frac{1}{2}L^{\alpha^k\frac{1-\beta}{1-\alpha}(1-\alpha^m)}.$$
In summary, for all sufficiently small $R$ satisfying the conditions of case (1) we have
$$N(B(x,R),R^{1/\theta}) \ \asymp \  L^{\alpha^k\frac{1-\beta}{1-\alpha}(1-\alpha^m)}.$$

\item {$L^{\alpha^{k-1}\beta}\leq R< L^{\alpha^{k-1}}$ and $L^{\alpha^{k+m-1}\beta}\leq R^{1/\theta}< L^{\alpha^{k-1+m}}$.}

In this case any ball $B(x,R)$   contains $$\left[\frac{R}{L^{\alpha^{k-1}\beta}}\right]\pm 1$$ many intervals of length $L^{\alpha^k}$.  Also to cover any interval of length $L^{\alpha^{k+m-1}}$ we need 
$$\left[\frac{L^{\alpha^{k+m-1}}}{R^{1/\theta}}\right]\pm 1$$ many $R^{1/\theta}$-balls.  Using the same tricks as in case (1), we may get rid of the integer part and the $\pm 1$. In summary, for all sufficiently small $R$ satisfying the conditions of case (2) we obtain
\begin{eqnarray*}
N(B(x,R),R^{1/\theta}) &\asymp & \frac{R}{L^{\alpha^{k-1}\beta}}L^{(\alpha^k+\alpha^{k+1}+\alpha^{k+2}+\dots+\alpha^{k+m-2})}\frac{L^{\alpha^{k+m-1}}}{R^{1/\theta}} \\ \\
&=& \frac{R}{R^{1/\theta}}L^{\alpha^k\frac{1-\beta}{1-\alpha}(1-\alpha^{m-1})+\alpha^{k+m-1}-\alpha^{k}\beta}.
\end{eqnarray*}
\end{enumerate}
Finally, applying the definition of $k$ yields the desired result.
\end{proof}

\begin{lma} \label{L2}
For every $\theta'$ such that $\log_{\alpha} \frac{1}{\theta}=m$ is an integer, there exists $\epsilon=\epsilon(m)>0$ such that
$\dim_{\mathrm{L}}^{\theta} F_{\alpha,\beta}$ and $\dim_{\mathrm{A}}^{\theta} F_{\alpha,\beta}$ are not constant in the interval $[\theta'(1-\varepsilon),\theta']$.
\end{lma}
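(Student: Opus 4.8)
The plan is to show that, just to the left of the distinguished value, both spectra separate from the box dimensions: there is $\delta_0=\delta_0(m,\alpha,\beta)\in(0,\log_\alpha\beta)$ such that for every $\delta\in(0,\delta_0)$,
\[
\dim_{\mathrm A}^{\alpha^{-(m-\delta)}}F_{\alpha,\beta}\ >\ \overline{\dim}_{\mathrm B}F_{\alpha,\beta}
\qquad\text{and}\qquad
\dim_{\mathrm L}^{\alpha^{-(m-\delta)}}F_{\alpha,\beta}\ <\ \underline{\dim}_{\mathrm B}F_{\alpha,\beta}.
\]
Since $\theta=\alpha^{-s}$ has $\log_\alpha(1/\theta)=s$, and Lemma~\ref{L1} gives equality at $s=m$ (namely $\dim_{\mathrm A}^{\alpha^{-m}}F_{\alpha,\beta}=\overline{\dim}_{\mathrm B}F_{\alpha,\beta}$ and $\dim_{\mathrm L}^{\alpha^{-m}}F_{\alpha,\beta}=\underline{\dim}_{\mathrm B}F_{\alpha,\beta}$), the displayed strict inequalities force both spectra to be non-constant on $[(1-\varepsilon)m,m]$ as soon as $\varepsilon(m)m<\delta_0$; so one takes, say, $\varepsilon(m)=\delta_0/(2m)$.

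Write $F=F_{\alpha,\beta}$, $\ell=-\log L>0$ and $c=\frac{\beta-1}{\alpha-1}\in(0,1)$. Recall that at stage $k$ the construction consists of $N_k\asymp e^{\ell c(\alpha^k-1)}$ intervals of length $e^{-\ell\alpha^k}$, arranged inside each stage-$(k-1)$ interval with consecutive separation $e^{-\ell\alpha^{k-1}\beta}$, and that (since $1<\beta<\alpha$) the scales $e^{-\ell\alpha^k}$ and $e^{-\ell\alpha^k\beta}$ interlace. Exactly as in the proof of Lemma~\ref{L1}, for the scale pairs $R,\,r=R^{1/\theta}$ chosen below the quantities $\sup_xN(B(x,R)\cap F,r)$ and $\inf_xN(B(x,R)\cap F,r)$ are obtained by counting the maximal, respectively minimal, number of construction intervals of size comparable to $r$ that a ball of radius $R$ can meet.

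\emph{Assouad spectrum.} Fix $\delta\in(0,\log_\alpha\beta)$, put $\theta=\alpha^{-(m-\delta)}$, and take
\[
R=e^{-\ell\alpha^{k-1}\beta},\qquad r=R^{1/\theta}=e^{-\ell\alpha^{k+m-1-\delta}\beta}\qquad(k\to\infty),
\]
noting $e^{-\ell\alpha^{k+m-1}\beta}<r<e^{-\ell\alpha^{k+m-1}}$ for such $\delta$. A ball $B(x,R)$ may be centred so as to contain a whole stage-$k$ interval, inside which the number of construction intervals of size $\asymp r$ is $\asymp (N_{k+m-1}/N_k)\,e^{-\ell(\alpha^{k+m-1}-\alpha^{k+m-1-\delta}\beta)}$. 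Feeding this into the explicit formula for $\dim_{\mathrm A}^\theta F$ and letting $k\to\infty$ gives
\[
\dim_{\mathrm A}^{\alpha^{-(m-\delta)}}F\ \ge\ g(\delta):=\frac{\alpha}{\beta}\cdot\frac{\alpha^{m-1}\bigl(c-1+\alpha^{-\delta}\beta\bigr)-c}{\alpha^{m-\delta}-1}.
\]
A direct computation gives $g(0)=\overline{\dim}_{\mathrm B}F$ and $g'(0)=\frac{\alpha^{m}\ln\alpha}{\beta(\alpha^{m}-1)}\,(c\alpha-\beta)$; since $c\alpha-\beta=\frac{\beta-\alpha}{\alpha-1}<0$, the function $g$ is strictly decreasing at $0$, whence $g(\delta)>\overline{\dim}_{\mathrm B}F$ for all sufficiently small $\delta>0$.

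\emph{Lower spectrum.} Fix $\delta\in(0,\log_\alpha\beta)$, put $\theta=\alpha^{-(m-\delta)}$, and take
\[
R=e^{-\ell\alpha^{k+\delta}},\qquad r=R^{1/\theta}=e^{-\ell\alpha^{k+m}}\qquad(k\to\infty),
\]
so that $r$ is a stage-$(k+m)$ interval scale and $e^{-\ell\alpha^{k}\beta}<R<e^{-\ell\alpha^{k}}$. Choosing $x\in F$ at an endpoint of a stage-$k$ interval, $B(x,R)\cap F$ is, up to universal constants, a window of length $\asymp R$ of a single stage-$k$ interval, which meets $\asymp e^{-\ell(\alpha^{k+\delta}-\alpha^{k}\beta)}\,(N_{k+m}/N_{k+1})$ construction intervals of size $\asymp r$; this endpoint configuration is the extremal (sparsest) one. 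The $\liminf/\inf$ formula for $\dim_{\mathrm L}^\theta F$ then yields
\[
\dim_{\mathrm L}^{\alpha^{-(m-\delta)}}F\ \le\ h(\delta):=\frac{c\alpha^{m}+1-c-\alpha^{\delta}}{\alpha^{m}-\alpha^{\delta}},
\]
with $h(0)=\underline{\dim}_{\mathrm B}F$ and $h'(0)=-\frac{(1-c)\ln\alpha}{\alpha^{m}-1}<0$ because $c<1$; hence $h(\delta)<\underline{\dim}_{\mathrm B}F$ for all sufficiently small $\delta>0$. Choosing $\delta_0>0$ smaller than $\log_\alpha\beta$ and than the two smallness thresholds just produced completes the argument.

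The hard part is the combinatorial bookkeeping behind the localised covering numbers: one must verify that, for the (slightly off-lattice) scale pairs above, an optimally placed ball of radius $R$ meets exactly the claimed number of construction pieces at scale $r$, and — crucially for the lower spectrum — that the endpoint-centred ball really is the sparsest configuration, with no alternative placement of $x$ producing a smaller count. This is the same flavour of estimate as in Lemma~\ref{L1}, only carried out at scales that are not lattice scales of the construction; granting it, the two derivative computations are routine and, in both cases, the sign that makes everything work is simply $\alpha-\beta>0$.
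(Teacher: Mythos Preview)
Your argument for the lower spectrum is fine: picking a single sequence $R_k$ and a single point $x_k$ legitimately gives an upper bound on $\liminf_R\inf_x(\cdot)$, and your computation of $h(\delta)$ and $h'(0)<0$ is correct.

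There is, however, a genuine error in the Assouad part. You correctly compute $g(0)=\overline{\dim}_{\mathrm B}F$ and $g'(0)=\frac{\alpha^{m}\ln\alpha}{\beta(\alpha^{m}-1)}(c\alpha-\beta)<0$, but then you conclude $g(\delta)>\overline{\dim}_{\mathrm B}F$ for small $\delta>0$. This is backwards: $g'(0)<0$ means $g(\delta)<g(0)=\overline{\dim}_{\mathrm B}F$ for small $\delta>0$. Hence your lower bound $\dim_{\mathrm A}^{\theta}F\ge g(\delta)$ is \emph{weaker} than the universal bound $\dim_{\mathrm A}^{\theta}F\ge\overline{\dim}_{\mathrm B}F$ from Proposition~\ref{BB} and gives no information. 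The gap is in your scale choice: with $R=e^{-\ell\alpha^{k-1}\beta}$ the small scale $r=e^{-\ell\alpha^{k+m-1-\delta}\beta}$ lands strictly between a construction-interval scale and a gap scale, and the resulting count is not extremal. A working alternative is to shift the large scale instead: take $R=e^{-\ell\alpha^{k-1+\delta}\beta}$ so that $r=R^{\alpha^{m-\delta}}=e^{-\ell\alpha^{k+m-1}\beta}$ lands exactly at a gap scale. Then $B(x,R)$ still contains a full level-$k$ interval (since $R>e^{-\ell\alpha^k}$ for small $\delta$), and counting level-$(k+m)$ intervals gives the lower bound
\[
\dim_{\mathrm A}^{\alpha^{-(m-\delta)}}F\ \ge\ \frac{c\alpha(\alpha^{m}-1)}{\beta(\alpha^{m}-\alpha^{\delta})},
\]
which equals $\overline{\dim}_{\mathrm B}F$ at $\delta=0$ and has \emph{positive} derivative $\frac{c\alpha\ln\alpha}{\beta(\alpha^m-1)}$ there, yielding the desired strict inequality.

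For comparison, the paper takes a different route: it works on the side $\log_\alpha(1/\theta)=m+c$ with $c>0$ small (rather than your $m-\delta$) and carries out a full four-case analysis of where $R$ and $R^{1/\theta}$ fall relative to the lattice of construction scales, obtaining explicit closed formulae for both spectra valid on a whole interval. Your single-scale approach (once the Assouad scale is repaired as above) is more economical --- one well-chosen scale pair per spectrum suffices to show non-constancy --- whereas the paper's case analysis yields more, namely the actual local shape of the spectra and hence the points of non-differentiability.
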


\begin{proof}
Again, for any $R>0$, there is an integer $k$ such that
$$L^{\alpha^k}\leq R< L^{\alpha^{k-1}}.$$
The calculation may now proceed as in the proof of Lemma \ref{L1} but with $\theta$ very close to, but smaller than, $\theta'$.  Write $\log_{\alpha}\frac{1}{\theta}=m+c$ where $m$ is the integer part and $c$ is the fractional part which is assumed to be small.  We have now four cases, which we consider in turn.   In each case, we drop the integer part symbols $[.]$ because we are only interested in the asymptotic behaviour of $N(B(x,R),R^{1/\theta})$.

\begin{enumerate}
\item $L^{\alpha^k}\leq R< L^{\alpha^{k-c}}$ and  $L^{\alpha^{k+m}\beta}\leq R^{1/\theta}< L^{\alpha^{k+m}}.$
Any ball $B(x,R)$ with $x\in F_{\alpha,\beta}$ will contain one interval of length  $L^{\alpha^k}$ and, for any $L^{\alpha^{k+m}}$ interval, we need approximately $$\frac{L^{\alpha^{k+m}}}{R^{1/\theta}}$$ many $R^{1/\theta}$-balls to cover it. Therefore
$$N(B(x,R),R^{1/\theta}) \ \asymp \  \frac{L^{\alpha^{k+m}}}{R^{1/\theta}}L^{\alpha^k\frac{1-\beta}{1-\alpha}(1-\alpha^m)}.$$
\item $L^{\alpha^{k-c}}\leq R< L^{\alpha^{k-1}\beta}$ and  $L^{\alpha^{k+m}}\leq R^{1/\theta}< L^{\alpha^{k+m-1}\beta}.$
Any $B(x,R)$ with $x\in F_{\alpha,\beta}$ will contain one  $L^{\alpha^k}$ interval and, on the other hand, every $L^{\alpha^{k+m}}$ interval contained inside this $L^{\alpha^k}$ interval needs one $R^{1/\theta}$-ball to cover it. Therefore
$$N(B(x,R),R^{1/\theta})=L^{\alpha^k\frac{1-\beta}{1-\alpha}(1-\alpha^m)}.$$
\item $L^{\alpha^{k-1}\beta}\leq R< L^{\alpha^{k-1-c}\beta}$ and $L^{\alpha^{k+m}}\leq R^{1/\theta}< L^{\alpha^{k+m-1}\beta}.$
Any $B(x,R)$ with $x\in F_{\alpha,\beta}$ contains approximately $$\frac{R}{L^{\alpha^{k-1}\beta}}$$ many intervals of length $L^{\alpha^k}$, but for each interval of length $L^{\alpha^{k+m}}$ we need one $R^{1/\theta}$-ball to cover it. Therefore
$$N(B(x,R),R^{1/\theta}) \ \asymp \  \frac{R}{L^{\alpha^{k-1}\beta}}L^{\alpha^k\frac{1-\beta}{1-\alpha}(1-\alpha^m)}.$$
\item $L^{\alpha^{k-1-c}\beta}\leq R< L^{\alpha^{k-1}}$ and $L^{\alpha^{k+m-1}\beta}\leq R^{1/\theta}< L^{\alpha^{k+m-1}}.$
Any $B(x,R)$ with $x\in F_{\alpha,\beta}$ contains approximately $$\frac{R}{L^{\alpha^{k-1}\beta}}$$ many intervals of length $L^{\alpha^k}$.  Also for any $L^{\alpha^{k+m-1}}$ interval we need approximately $$\frac{L^{\alpha^{k+m-1}}}{R^{1/\theta}}$$ many $R^{1/\theta}$-balls to cover it.  Therefore
$$N(B(x,R),R^{1/\theta}) \ \asymp \  \frac{R}{R^{1/\theta}}L^{\alpha^k\frac{1-\beta}{1-\alpha}(1-\alpha^{m-1})+\alpha^{k+m-1}-\alpha^{k}\beta}.$$
\end{enumerate}

It follows that for $c$ sufficiently small we have the following formula for spectra
$$\dim_{\mathrm{A}}^{\theta} F_{\alpha,\beta}=\frac{\frac{\alpha}{\beta}\frac{1-\beta}{1-\alpha}\left(\alpha^c-\frac{1}{\theta}\right)-\alpha^c+1}{1-\frac{1}{\theta}}$$
and
$$\dim_{\mathrm{L}}^{\theta} F_{\alpha,\beta}=\frac{\frac{1-\beta}{1-\alpha}\left(\alpha^c-\frac{1}{\theta}\right)}{1-\frac{1}{\theta}}$$ 
which are not constant.  The above formulae are obtained by considering the 4 cases above along with estimates derived from the definition of $k$.  This an expression for the asymptotic behaviour of $N(B(x,R),R^{1/\theta})$ which yields formulae for the spectra.   These formulae only hold for $c$ smaller than some constant $c_0 \in (0,1)$ (independent of $m$). This means they  are valid for
\[
\frac{1}{\alpha^{m+c_0}} \leq \theta \leq \frac{1}{\alpha^m}
\]
for all positive integers $m$.
\end{proof}

The following corollary follows from the results (and proofs) given in this section.  It shows that the sets we construct here exhibit some new phenomena.

\begin{cor}
For the sets $F_{\alpha,\beta}$ constructed in this section, the Assouad and lower spectra have the following properties:
\begin{enumerate}
\item they are not monotonic and, moreover, there are infinitely many disjoint intervals within which they fail to be monotonic.
\item they have infinitely many points of non-differentiability.
\end{enumerate}
\end{cor}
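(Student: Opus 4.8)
The plan is to read off everything from Lemmas~\ref{L1} and~\ref{L2} together with the general bounds in Propositions~\ref{BB} and~\ref{BBL} and the continuity statements (Corollary~\ref{Con}, Theorem~\ref{Con2}). Write $K=\overline{\dim}_{\mathrm B}F_{\alpha,\beta}=\frac{\beta-1}{\alpha-1}\cdot\frac{\alpha}{\beta}$ and $L=\underline{\dim}_{\mathrm B}F_{\alpha,\beta}=\frac{\beta-1}{\alpha-1}$, and for each integer $m\ge 1$ put $\theta_m=\alpha^{-m}\in(0,1)$, so that $\log_\alpha(1/\theta_m)=m$. By Lemma~\ref{L1} we have $\dim_{\mathrm A}^{\theta_m}F_{\alpha,\beta}=K$ and $\dim_{\mathrm L}^{\theta_m}F_{\alpha,\beta}=L$ for every such $m$, while by Propositions~\ref{BB} and~\ref{BBL} we have $\dim_{\mathrm A}^{\theta}F_{\alpha,\beta}\ge K$ and $\dim_{\mathrm L}^{\theta}F_{\alpha,\beta}\le L$ for all $\theta\in(0,1)$. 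Thus each $\theta_m$ is a global minimum point of the Assouad spectrum and a global maximum point of the lower spectrum, and these points accumulate at $0$.

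For part (1), fix $m\ge 1$ and let $J_m=(\theta_{m+1},\theta_m)$; these open intervals are pairwise disjoint. Let $c_0\in(0,1)$ be the constant from Lemma~\ref{L2}, which is independent of $m$, so that the quoted formulae for the spectra are valid for $\theta\in[\alpha^{-(m+c_0)},\theta_m]$ and, in particular, the spectra are not constant on this interval; since $c_0<1$ one has $[\alpha^{-(m+c_0)},\theta_m)\subseteq J_m$. As $\dim_{\mathrm A}^{\theta}F_{\alpha,\beta}\ge K$ everywhere, is not constant on $[\alpha^{-(m+c_0)},\theta_m]$, and equals $K$ at $\theta_m$, there must be $\theta^\ast\in J_m$ with $\dim_{\mathrm A}^{\theta^\ast}F_{\alpha,\beta}>K$. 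By continuity the Assouad spectrum tends to $K$ at both endpoints of $J_m$, so its restriction to $J_m$ can be neither non-decreasing (else $\lim_{\theta\to\theta_m^-}\dim_{\mathrm A}^{\theta}F_{\alpha,\beta}\ge\dim_{\mathrm A}^{\theta^\ast}F_{\alpha,\beta}>K$, a contradiction) nor non-increasing (the analogous contradiction at $\theta_{m+1}$). Hence it fails to be monotone on each $J_m$, and there are infinitely many such disjoint intervals; reversing the inequalities and replacing $K$ by $L$ gives the same conclusion for the lower spectrum.

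For part (2), we show both spectra fail to be differentiable at every $\theta_m$. On $[\alpha^{-(m+c_0)},\theta_m]$, writing $v=\alpha^c$ and using $1/\theta=\alpha^{m+c}=\alpha^m v$, the formulae of Lemma~\ref{L2} read
\[
\dim_{\mathrm A}^{\theta}F_{\alpha,\beta}=\frac{av+1}{1-\alpha^m v},\quad a=K(1-\alpha^m)-1,\quad\text{and}\quad \dim_{\mathrm L}^{\theta}F_{\alpha,\beta}=\frac{L(1-\alpha^m)\,v}{1-\alpha^m v};
\]
since $1-\alpha^m v=1-1/\theta<0$ these are smooth in $c$ on $[0,c_0]$, and a direct computation gives
\[
\frac{d}{dv}\,\dim_{\mathrm A}^{\theta}F_{\alpha,\beta}=\frac{(K-1)(1-\alpha^m)}{(1-\alpha^m v)^2},\qquad \frac{d}{dv}\,\dim_{\mathrm L}^{\theta}F_{\alpha,\beta}=\frac{L(1-\alpha^m)}{(1-\alpha^m v)^2}.
\]
Because $\alpha>\beta>1$ we have $\alpha^m\ne 1$, $K\in(0,1)$ (so $K\ne 1$) and $L>0$, so both right-hand sides are nonzero at $c=0$; since $c\mapsto\alpha^{-(m+c)}$ has nonvanishing derivative, the Assouad and lower spectra have nonzero left-hand derivative at $\theta_m$. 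But $\theta_m$ is an interior global minimum point of the Assouad spectrum and an interior global maximum point of the lower spectrum, so differentiability at $\theta_m$ would force the derivative there to vanish --- a contradiction. As the $\theta_m$, $m\ge 1$, form an infinite set, both spectra have infinitely many points of non-differentiability.

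The only genuine work is the bookkeeping in part~(1) --- upgrading ``non-constant on a subinterval ending at $\theta_m$, where the value is the global extreme'' to honest non-monotonicity on $J_m$ --- and verifying the two sign conditions ($K\ne 1$, $L\ne 0$) in part~(2); neither presents a real obstacle given Lemmas~\ref{L1}--\ref{L2}.
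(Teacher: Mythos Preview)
Your proof is correct and follows essentially the same approach as the paper: combine Lemmas~\ref{L1} and~\ref{L2} with continuity (Corollary~\ref{Con}, Theorem~\ref{Con2}) and the general bounds (Propositions~\ref{BB} and~\ref{BBL}). The paper's own proof is a single sentence to this effect, so your argument is simply a fully worked-out version of what the paper leaves to the reader.

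One point worth noting: for part~(2), the paper's one-line justification (``follows from the lemmas and continuity'') is somewhat loose, since continuity and non-constancy near an extremum do not by themselves force non-differentiability. Your approach --- computing the one-sided derivative at $\theta_m$ directly from the explicit formulae in Lemma~\ref{L2} and checking it is nonzero, hence incompatible with $\theta_m$ being an interior extremum --- is a genuinely more rigorous way to nail this down. The sign checks ($K\in(0,1)$ from $\alpha>\beta>1$, and $L>0$) are exactly what is needed.
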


\begin{proof}
This follows immediately by combining  Lemmas  \ref{L1} and \ref{L2} with the fact that the spectra are continuous, see Propositions \ref{Con} and \ref{Con2}.
\end{proof}

\section{Open problems and further work} \label{OpenSection}

In this section we collect several open questions concerning the work we have presented in this paper.

Corollary \ref{Con} and Theorem \ref{Con2} show that all of the spectra we consider are continuous in $\theta$, but many of our examples exhibit phase transitions, preventing the spectra from being any more regular globally.  However, in all of our examples the spectra are piecewise analytic and we wonder if this is always the case, either with finitely or countably many phase transitions.

\begin{ques}
Is it true that the dimension spectra are piecewise analytic, or at least piecewise differentiable?
\end{ques}

In Corollary \ref{Con} and Theorem \ref{Con2} we proved that the Assouad and lower spectra are Lipschitz when restricted to any closed subinterval of $(0,1)$. However, we have not ruled out the possibility of examples where the spectra exhibit less regularity on the whole domain.

\begin{ques}
Is it possible for the Assouad and lower spectra to fail to be Lipschitz, or even H\"older, on the whole interval $(0,1)$?
\end{ques}

In Proposition \ref{basic2sep} we proved that the modified lower dimension is stable under finite unions, provided the sets are `properly separated'.  Note that this property does not hold for the lower dimension.  We were unable to determine if the `properly separated' condition can be dropped.
\begin{ques}
Is it true that for subsets $E, F$ of a common metric space, we always have
\[
\dim_{\mathrm{ML}} E \cup F \ \leq \ \dim_{\mathrm{ML}} E \vee \dim_{\mathrm{ML}}  F ?
\]
The opposite inequality is a trivial consequence of monotonicity.
\end{ques}

Theorems  \ref{meas1}-\ref{meas3} proved that the Assouad and lower spectra are of Baire class 2 and thus Borel measureable.  We have not been able to determine if the modified lower dimension or modified lower spectrum are Borel measureable.

\begin{ques}
Are the modified lower dimension and  modified lower spectrum Borel measureable and, if so, which Baire classes do they belong to (if any)?
\end{ques}

Recall that in our study of spirals with sub-exponential and monotonic winding we needed to make the additional assumption that the upper box dimension was strictly larger than 1.  At first this might seem like a strange assumption, but it is the analogue of assuming that a decreasing sequence has positive box dimension.  Indeed, if a set has box dimension 0, then Corollary \ref{BD0} tells us that the Assouad spectrum is constantly equal to 0, thus hiding any strange properties which may occur in that case.  There is no analogous result here and it remains an interesting problem to investigate what can happen when the box dimension of a spiral is 1.  We suspect that Theorem \ref{spiral} no longer holds and that other phenomena are possible.

\begin{ques}
What can one say in general about the dimension spectra of spirals with sub-exponential and monotonic winding in the case where the box dimension of the spiral is 1?
\end{ques}

Also on the topic of spirals, we proved that if a straight line segment is mapped to a spiral with sub-exponential and monotonic winding, then the H\"older exponent of that map must satisfy certain restrictions based on the upper box dimension of the spiral.  It would be interesting to investigate the sharpness of this result.  A first step in this investigation could be the following question.

\begin{ques}
Can a  spiral with sub-exponential and monotonic winding and with upper box dimension strictly larger than 1  be mapped to line segment by a bi-H\"older map?  If so, what are the sharp bounds on the H\"older exponents?
\end{ques}

Once one has a reasonable notion of metric dimension, one may wish to consider how this dimension behaves under canonical geometric operations, such as orthogonal projections or sections (intersections with hyperplanes).  This theory is very well-developed for the Hausdorff dimension, starting with the classical paper of Marstrand \cite{Marstrand}, see also \cite{MattilaProjections} for the higher dimensional analogue and the survey papers \cite{MattilaSurvey, FalconerFraserJin} for more details and up-to-date references.  Roughly speaking, the philosophy behind Marstrand's Theorem and later developments is that if $F \subseteq \mathbb{R}^d$ has `dimension' $s \in [0,d]$, then the `dimension' of the projection of $F$ onto hyperplanes of dimension $k<d$ should be almost surely constant, with respect to the natural measure on the Grassmanian manifold.  In the case of the Hausdorff dimension, the almost sure value is the largest possible, namely $s \wedge k$.  However, for box dimension the almost sure constant is more subtle and given by a \emph{dimension profile}, introduced by Falconer and Howroyd see \cite{FalconerHowroyd1, FalconerHowroyd2}.    Recently Fraser and Orponen \cite{FraserOrponen} proved that the Assouad dimension does not follow in the spirit of Marstrand's Theorem in that it can attain multiple values with positive probability (under projection).   It would be interesting to consider these results for the Assouad spectrum, since it can be viewed as an interpolation between the Assouad and upper box dimension.

\begin{ques}
For a given $\theta \in (0,1)$, is the Assouad spectrum of given set almost surely constant under projection onto hyperplanes? 
\end{ques}

Independent of the answer to the above question, it seems likely that a spectrum of dimension profiles would play a role in the study of how the Assouad spectrum behaves under projection.

The key theme of this paper has been what happens when one fixes the relationship between the two scales $R$ and $r$ used in the definition of the Assouad dimension.  Of course, there are many ways to fix this relationship.  Indeed, let $\phi : [0,1] \to [0,1]$ be a decreasing continuous function such that for all $\phi(x) \leq x$ for all $x \in [0,1]$.  Then one may define the $\phi$-\emph{Assouad dimension} to be the analogue where the relationship between the two scales is fixed by always choosing $r = \phi(R)$.  We have studied the continuously parameterised family of functions $\phi(x) = x^{1/\theta}$ and it turns out that this really is the `correct' family to consider in order to develop a rich theory.  Indeed it follows from our results that if
\[
\frac{\log x}{\log\phi(x)} \to 0 \qquad (x \to 0)
\]
then the $\phi$-Assouad dimension coincides with the upper box dimension for any totally bounded set.  Moreover, if
\[
\frac{\log x}{\log\phi(x)} \to 1  \qquad (x \to 0)
\]
then the $\phi$-Assouad dimension coincides with the Assouad dimension for any set where the Assouad dimension  is `witnessed' by the Assouad spectrum (i.e. the spectrum reaches the Assouad dimension for some $\theta \in (0,1)$).   Therefore one will (usually) only obtain a rich theory for functions $\phi$ which have an intermediate behaviour, which leads one directly to our functions $\phi(x) = x^{1/\theta}$.  However,  sets for which  the Assouad dimension  is \emph{not} `witnessed' by the Assouad spectrum fall through the net in some sense. We propose the following programme  to deal with such examples.  For functions $\phi$ defined above, let
\begin{eqnarray*}
\dim_\mathrm{A}^\phi F &=& \inf \bigg\{ \alpha \  : \   (\exists C>0) \, (\exists \rho>0) \, (\forall 0<r \leq \phi(R) \leq R\leq \rho)    \\ \\
&\,& \qquad \qquad  \qquad \qquad  \sup_{x \in F}  N \big( B(x,R) \cap F , r\big) \ \leq \ C \left(\frac{R}{r}\right)^\alpha \bigg\}.
\end{eqnarray*}
Notice that this is not quite the definition we alluded to above because we only require $r \leq \phi(R)$, and not $r = \phi(R)$.  However, this seems more natural for what follows.  One now asks the question: how difficult is it to witness the Assouad dimension?  More precisely, the problem is to classify for which functions $\phi$ we have $\dim_\mathrm{A}^\phi F = \dim_\mathrm{A} F $.

\bibliographystyle{amsalpha}
\bibliography{}

\printindex

\end{document}